\numberwithin{equation}{section} 
\numberwithin{figure}{section} 
\theoremstyle{plain}
  \theoremstyle{plain}
  \newtheorem*{conjecture*}{Conjecture}
\theoremstyle{plain}
\newtheorem{thm}{Theorem}
  \theoremstyle{plain}
  \newtheorem{cor}[thm]{Corollary}
  \theoremstyle{remark}
  \newtheorem{rem}[thm]{Remark}
  \theoremstyle{definition}
  \newtheorem{defn}[thm]{Definition}
  \theoremstyle{plain}
  \newtheorem{prop}[thm]{Proposition}
  \theoremstyle{plain}
  \newtheorem{lem}[thm]{Lemma}
\def\makebbb#1{
    \expandafter\gdef\csname#1\endcsname{
        \ensuremath{\Bbb{#1}}}
}
\begin{document}

\title{Sharp inequalities for determinants of Toeplitz operators and $\bar{\partial}-$Laplacians
on line bundles }

\author{Robert J. Berman}

\curraddr{Mathematical Sciences, Chalmers University of Technology and the
University of Gothenburg, SE-412 96 Göteborg, Sweden}

\email{robertb@chalmers.se}
\begin{abstract}
We prove sharp inequalities for determinants of Toeplitz operators
and twisted $\bar{\partial}-$Laplace operators on the two-sphere,
generalizing the Moser-Trudinger-Onofri inequality. In particular
a sharp version of conjectures of Gillet-Soulé and Fang motivated
by Arakelov geometry is obtained; applications to $SU(2)-$invariant
determinantal random point processes on the two-sphere are also discussed.
The inequalities are obtained as corollaries of a general theorem
about the maximizers of a certain non-local functional defined on
the space of all positively curved Hermitian metrics on an ample line
bundle $L$ over a compact complex manifold. This functional is an
{}``adjoint version'', introduced by Berndtsson, of Donaldson's
$L$-functional and generalizes the Ding-Tian functional whose critical
points are Kähler-Einstein metrics. In particular, new proofs of some
results in Kähler geometry are also obtained, including a lower bound
on Mabuchi's $K-$energy and the uniqueness result for Kähler-Einstein
metrics on Fano manifolds of Bando-Mabuchi. 
\end{abstract}
\maketitle
\tableofcontents{}

\section{Introduction}

Consider the two-dimensional sphere $S^{2}$ equipped with its standard
Riemannian metric $g_{0}$ of constant positive curvature, normalized
so that the corresponding volume form $\omega_{0}$ gives unit volume
to $S^{2}.$ A celebrated inequality of Moser-Trudinger-Onofri proved
in its sharp form by Onofri \cite{on}, asserts that \begin{equation}
\log\int_{S^{2}}e^{-u}\omega_{0}\leq-\int_{S^{2}}u\omega_{0}+\frac{1}{4}\int_{S^{2}}du\wedge d^{c}u\label{eq:m-t-o intro}\end{equation}
 for any, say smooth, function $u$ on $S^{2},$ where the last term
is the $L^{2}-$norm of the gradient of $u$ in the conformally invariant
notation of section \ref{sub:Setup} below. 

As is well-known the inequality above has a rich geometric content
and appears in a number of seemingly unrelated contexts ranging from
the problem of prescribing the Gauss curvature in a conformal class
of metrics on $S^{2}$ (the Yamabe and Nirenberg problems \cite{cha})
to sharp critical \emph{Sobolev inequalites} \cite{bec} and lower
bounds on \emph{free energy functionals} in mathematical physics \cite{on,k}.
The geometric content of the inequality above appears clearly when
considering the extremal funtions $u.$ Note first that $e^{-u}\omega_{0}$
appearing in the left hand side above is the volume form corresponding
to the metric $g_{u}:=e^{-u}g_{0},$ conformally equivalent to $g_{0}.$
Denoting by $\mbox{Conf}{}_{0}(g_{0})$ the set of all metrics $g_{u}$
with normalized volume (equal to one), \emph{equality} holds in \ref{eq:m-t-o intro}
for $u$ such that $g_{u}\in\mbox{Conf}{}_{0}(g_{0})$ precisely when
$g_{u}$ is the pull-back of $g_{0}$ under a conformal transformation
of $S^{2}.$ Since, $g_{0}$ has constant curvature, this latter fact
means that $u$ satisfies the constant positive curvature equation
\[
\omega_{0}+dd^{c}u=e^{-u}\omega_{0},\]
where $dd^{c}u$ is proportional to $(\Delta_{g_{0}}u)\omega_{0}$
(using the notation in section \ref{sub:Setup}).

There is also a \emph{spectral} intepretation of the Moser-Trudinger-Onofri
inequality. As shown by Onofri \cite{on} and Osgood-Phillips-Sarnak
\cite{o-p-s} the inequality \ref{eq:m-t-o intro} is equivalent to
the fact that the functional \[
g_{u}\mapsto\det\Delta_{g_{u}}\]
 on $\mbox{Conf}{}_{0}(g_{0}),$ where $\det\Delta_{g_{u}}$ denotes
the (zeta function regularized) \emph{determinant of the Laplacian}
$\Delta_{g_{u}}$ wrt the metric $g_{u},$ achieves its upper bound
precisely for $g_{0}$ (modulo conformal transformations as above).
The bridge between this latter fact and the inequality \ref{eq:m-t-o intro}
is given by the \emph{Polyakov anomaly formula} \cite{cha}, which
first appeared in Physics in the path integral (random surface) approach
to the quantization of the bosonic string.

From the point of view of complex geometry $(S^{2},g_{0})$ may be
identified with the complex projective line $\P^{1}$ endowed with
its standard $SU(2)-$invariant Kähler metric $\omega_{0}$ (the Fubini-Study
metric). The two-form $\omega_{0}$ is the normalized curvature form
of an Hermitian metric $h_{0}$ on the hyper plane line bundle $\mathcal{O}(1)\rightarrow\P^{1}.$
For any natural number $m,$ the pair $(\omega_{0},h_{0})$ induces
naturally
\begin{itemize}
\item an $SU(2)-$invariant Hermitian product on the space $H^{0}(\P^{1},\mathcal{O}(m))$
of global holomorphic sections of the $m$ th tensor power $\mathcal{O}(m)$
of $\mathcal{O}(1),$ i.e. on the space of all homogenous polynomials
of degree $m.$ 
\item a \emph{Dolbeault Laplace operator} $\Delta_{\bar{\partial}_{0}}^{(m)}$
(i.e. a $\bar{\partial}-$Laplacian) on the space of smooth sections
of $\mathcal{O}(m),$ such that its null-space is precisely $H^{0}(\P^{1},\mathcal{O}(m)).$ 
\end{itemize}
Changing the Hermitian metric on $\mathcal{O}(m)$ corresponds to
{}``twisting'' by a function $e^{-u},$ for $u\in\mathcal{C}^{\infty}(S^{2})$
and we will denote the corresponding Dolbeault Laplace operator by
$\Delta_{\bar{\partial}_{u}}^{(m)}$ (see section \ref{sub:Application-to-determinants}
for precise definitions). It is worth emphasizing that as opposed
to the Laplacian $\Delta_{g_{u}}$ the Dolbeault Laplacian $\Delta_{\bar{\partial}_{u}}^{(m)}$
is \emph{invariant} under translation of $u.$ Motivated by Arakelov
Geometry - notably the arithmetic Riemann-Roch theorem - Gillet-Soulé
made a general conjecture which in the case of $S^{2}$ amounts to
the following (\cite{g-s}; see also \cite{g-sa} p. 526-527) 
\begin{conjecture*}
(Gillet-Soulé). The determinant of the Dolbeault Laplacian $\Delta_{\bar{\partial}_{u}}^{(m)}$
naturally induced by the function $u$ on any given line bundle $\mathcal{O}(m)$
over $S^{2}$ is bounded from above when $u$ ranges over $\mathcal{C}^{\infty}(S^{2}).$ 
\end{conjecture*}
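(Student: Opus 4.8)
The plan is to convert the boundedness of the regularized determinant into the boundedness of an explicit functional of $u$ by means of an anomaly formula, and then to establish the latter by a variational argument modelled on the proof of Onofri's inequality \ref{eq:m-t-o intro}. The first step is to invoke the holomorphic analogue of the Polyakov anomaly formula — equivalently the Bismut--Gillet--Soul\'e curvature formula for the Quillen metric on the determinant line $\lambda=\det H^{0}(\P^{1},\mathcal{O}(m))$ (note $H^{1}=0$ here, so $\lambda$ is just $\det H^{0}$). Fixing a basis $s_{1},\dots,s_{N}$ of $H^{0}$, $N=m+1$, and putting $s=s_{1}\wedge\cdots\wedge s_{N}$, the very definition of the Quillen metric gives
\[
\log\det{}'\Delta_{\bar{\partial}_{u}}^{(m)}=\log\det\bigl(\langle s_{i},s_{j}\rangle_{L^{2},u}\bigr)-\log\|s\|_{Q,u}^{2},
\]
where $\det{}'$ denotes the product over the non-zero spectrum (the full determinant vanishes since $H^{0}$ lies in the kernel, so this is the only sensible reading of the conjecture). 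The point of Bismut--Gillet--Soul\'e is that $\log\|s\|_{Q,u}^{2}$ is \emph{local} in $u$: as $u$ varies it changes by an explicit Aubin--Mabuchi/Monge--Amp\`ere energy $E(u)$ built from $\int u\,\omega_{0}$ and $\int du\wedge d^{c}u$. Hence the whole problem reduces to a uniform upper bound for the \emph{non-local} functional
\[
\mathcal{F}(u):=\log\det\bigl(\langle s_{i},s_{j}\rangle_{L^{2},u}\bigr)-E(u),
\]
the first term being the logarithm of the $L^{2}$-Gram determinant, i.e. (a version of) Donaldson's $L$-functional.

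The translation invariance of $\Delta_{\bar{\partial}_{u}}^{(m)}$ emphasized in the introduction is the first structural input: it lets me normalize $u$ by $\int_{S^{2}}u\,\omega_{0}=0$, reducing to a single gauge slice on which $\mathcal{F}$ is invariant. On this slice $\mathcal{F}$ is precisely a generalized Moser--Trudinger--Onofri functional; for $m=0$ the sections are constants, the Gram determinant collapses to $\int_{S^{2}}e^{-u}\omega_{0}$, and $\mathcal{F}(u)\le 0$ is exactly inequality \ref{eq:m-t-o intro}, recovered via the Osgood--Phillips--Sarnak identification. I would therefore show, by the direct method of the calculus of variations, that $\mathcal{F}$ attains its supremum and that the supremum is finite: take a maximizing sequence, extract a limit, and verify that the Euler--Lagrange equation of the limit is the constant-curvature (K\"ahler--Einstein, here Fubini--Study) equation, so that the maximizer is $h_{0}$ up to symmetry.

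The main obstacle is the compactness of maximizing sequences, and it has two sources. First, the Gram-determinant term sits at the \emph{borderline} of the Sobolev embedding — this is precisely the Moser--Trudinger phenomenon — so coercivity can fail, and controlling the non-local term against $E(u)$ with the \emph{sharp} constant is delicate. Second, and on $\P^{1}$ more seriously, the non-compact automorphism group $\mathrm{PSL}(2,\C)$ acts on the maximizers (this is exactly why equality in \ref{eq:m-t-o intro} holds along a whole orbit), so a maximizing sequence can escape to infinity along this group; I would break the symmetry by imposing a balancing (center-of-mass) normalization before extracting a limit. To organize both points I would use Berndtsson's convexity theorem (positivity of direct image bundles): the energy functional is convex along Mabuchi geodesics in the space of positively curved metrics, which both yields uniqueness of the maximizer modulo $\mathrm{PSL}(2,\C)$ and converts the coercivity estimate into a controllable statement about geodesic rays. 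Once the maximizer is identified with the Fubini--Study metric $h_{0}$, the finite value $\mathcal{F}(h_{0})$ is the desired upper bound, proving the conjecture.
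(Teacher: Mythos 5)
Your first step --- reducing the boundedness of $\det\Delta_{\bar{\partial}_{u}}^{(m)}$ via the Bismut--Gillet--Soul\'e anomaly formula to an upper bound for the non-local functional $\mathcal{F}=\log\det(\mbox{Gram})-E(u)$ --- is exactly the reduction the paper performs (in the proof of Corollary \ref{cor:max of det}, the local Bott--Chern term is computed explicitly and the problem becomes the Toeplitz-determinant inequality of Corollary \ref{cor:moser}). The gap is in your second step. You propose to prove $\sup\mathcal{F}<\infty$ by the direct method: take a maximizing sequence, extract a limit, identify the limit via its Euler--Lagrange equation. But establishing the existence of a maximizer is precisely the hard part here --- the functional sits at the borderline Moser--Trudinger exponent, so maximizing sequences can concentrate, and the $\mathrm{PSL}(2,\C)$-orbit is non-compact; you name both obstructions but do not overcome them. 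Saying that Berndtsson's convexity ``converts the coercivity estimate into a controllable statement about geodesic rays'' is not an argument, and for $m=0$ your scheme would amount to proving Onofri's inequality by naive compactness, which is known not to work without substantial extra input (concentration-compactness, Aubin's improved inequality, or an a priori proof of the inequality itself). The paper's route avoids existence entirely: by $SU(2)$-invariance the point $u=0$ is \emph{manifestly} a critical point of $\mathcal{F}_{\omega_{0}}$ (the normalized Monge--Amp\`ere measure and the Bergman measure are both invariant measures on a homogeneous space, hence equal), and Berndtsson's theorem is used to show that $\mathcal{F}_{\omega_{0}}$ is \emph{concave along $C^{0}$-geodesics}, so that any critical point is automatically a global maximizer --- no maximizing sequence is ever taken.

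A second, independent omission: the Gillet--Soul\'e conjecture quantifies over \emph{all} smooth $u$, whereas the geodesic/convexity machinery (yours and the paper's) lives on the space $\mathcal{H}_{\omega_{0}}$ of metrics with $\omega_{u}>0$. Your normalization $\int_{S^{2}}u\,\omega_{0}=0$ does not address this. The paper bridges the gap with the projection operator $P_{\omega_{0}}$ of formula \ref{eq:proj oper intro}, showing $\mathcal{F}_{\omega_{0}}(u)\leq\mathcal{F}_{\omega_{0}}(P_{\omega_{0}}u)$ by combining the monotonicity of $\mathcal{L}_{\omega_{0}}$ with an integration-by-parts argument on the contact set; this step is specific to $n=1$ (it fails in higher dimensions, cf. remark \ref{rem:j-f}) and cannot be skipped.
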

This was confirmed by Fang \cite{f}, who by symmetrization reduced
the problem to the case when $u$ is invariant under rotation around
an axes of $S^{2},$ earlier treated by Gilllet-Soulé \cite{g-s}.
Fang also put forward the following more precise form of the conjecture
above:
\begin{conjecture*}
(Fang). The upper bound in the previous conjecture is achieved precisely
for $u$ identically constant.
\end{conjecture*}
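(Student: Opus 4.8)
The plan is to reduce the sharp spectral statement to a concavity property of an auxiliary functional on the space of metrics, and then to extract the equality case. First I would fix the base metric $\omega_0$ and differentiate the zeta-regularized determinant along a path $u_t$, invoking a Polyakov--Bismut--Gillet--Soul\'e anomaly formula for the twisted $\bar\partial$-Laplacian on $\mathcal{O}(m)$. Writing $G_m(u)$ for the Gram matrix of a fixed basis of $H^0(\P^1,\mathcal{O}(m))$ in the product $\langle s,s'\rangle_u=\int_{S^2}s\overline{s'}\,e^{-u}h_0^m\,\omega_0$ and $\rho_u^{(m)}$ for the associated Bergman density, the first variation should take the form $\frac{d}{dt}\log\det\Delta_{\bar\partial_{u_t}}^{(m)}=\int_{S^2}\dot u_t\,(\beta_{u_t}-\rho^{(m)}_{u_t})\,\omega_0$, where $\beta_u$ is a local density built from $\omega_0$ and the curvature $\omega_u=m\omega_0+dd^cu$. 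Integrating identifies $\log\det\Delta_{\bar\partial_u}^{(m)}=\mathcal{F}_m(u)+\mathrm{const}$, where $\mathcal{F}_m$ is the level-$m$ analogue of Berndtsson's adjoint $L$-functional: a Quillen/Monge--Amp\`ere energy term $\mathcal{Q}(u)$ minus $\log\det G_m(u)$, Donaldson's $L$-functional. The translation invariance of the operator is visible as $\mathcal{F}_m(u+c)=\mathcal{F}_m(u)$.

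Next I would prove that $\mathcal{F}_m$ is concave along weak geodesics $t\mapsto u_t$ in the space of positively curved metrics on $\mathcal{O}(m)$. The decisive input is Berndtsson's theorem on positivity of direct image bundles, by which $\log\det G_m$ is convex along such geodesics, so that $-\log\det G_m$ is concave; combined with the fact that the energy part $\mathcal{Q}$ contributes an affine Monge--Amp\`ere term and a concave term of Dirichlet type, this makes $\mathcal{F}_m$ concave and bounded from above, recovering the Gillet--Soul\'e/Fang boundedness. Concavity further shows that the maximizers are exactly the critical points, i.e. the solutions of the balancing equation $\rho_u^{(m)}=\beta_u$. Since the Fubini--Study metric has constant Bergman density by $SU(2)$-homogeneity, $u\equiv\mathrm{const}$ solves this equation and is therefore a maximizer.

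Finally, the sharp assertion --- that the constant is the \emph{only} maximizer --- is the heart of the matter, and I expect it to be the main obstacle. If $u_0$ is a maximizer, concavity shows $\mathcal{F}_m$ is affine along the geodesic joining it to $u\equiv 0$; the equality case of Berndtsson's theorem forces this geodesic to be generated by a holomorphic vector field, so that $u_0=F\cdot 0$ for some $F\in\mathrm{Aut}(\P^1)=\mathrm{PGL}_2(\C)$. To exclude the non-constant members of this orbit I would use that the base metric $\omega_0$ is held fixed, so the local part $\mathcal{Q}$ of the anomaly carries a Dirichlet energy of exactly the type $\tfrac14\int du\wedge d^cu$ appearing in \eqref{eq:m-t-o intro}, which is strictly concave modulo constants. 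The automorphisms transverse to the isometry group $SU(2)$ are precisely the non-compact directions of $\mathrm{PGL}_2(\C)$, along which the geodesic velocity $\dot u$ is \emph{non-constant}; the Dirichlet term is therefore strictly decreasing along them, so a maximizer cannot leave the $SU(2)$-orbit and must be $SU(2)$-invariant, hence constant. The delicate points throughout are the regularity of the weak geodesics needed to justify these second-variation and equality arguments, and the precise bookkeeping of the symmetry-breaking Dirichlet term in the anomaly formula.
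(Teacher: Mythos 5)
Your overall architecture is essentially the paper's: the Bismut--Gillet--Soul\'e anomaly formula converts $\log\det\Delta^{(m)}_{\bar\partial_u}$ into an adjoint Donaldson-type functional $\mathcal{E}-\mathcal{L}$ plus an explicit local remainder; Berndtsson's positivity theorem gives convexity of $\log\det G_m$ along weak geodesics and its equality case gives uniqueness modulo $\mbox{Aut}_{0}$; and the M\"obius orbit is excluded because the Dirichlet energy enters the anomaly with coefficient $\tfrac12$ while the sharp Toeplitz inequality only carries $\tfrac{m+1}{2(m+2)}$, leaving the strictly negative remainder $-\tfrac{1}{2(m+2)}\int du\wedge d^{c}u$ for non-constant $u$. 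That last observation is exactly how the paper closes the equality case, so your ``symmetry-breaking'' step is right in substance --- indeed you do not even need the M\"obius characterization, since equality already forces $\int du\wedge d^{c}u=0$.

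The genuine gap is the domain of the argument. Fang's conjecture quantifies over \emph{all} smooth $u$, i.e.\ all Hermitian metrics on $\mathcal{O}(m)$, whereas every step of your plan (weak geodesics, Berndtsson's theorem, the second-variation computations) lives in a space of positively curved metrics --- and the relevant positivity is that of the \emph{adjoint} polarization $L=\mathcal{O}(m+2)$ with $H^{0}(L+K_{\P^{1}})=H^{0}(\mathcal{O}(m))$, not of $\mathcal{O}(m)$ itself: for $m=0$ your space of ``positively curved metrics on $\mathcal{O}(m)$'' is empty. Passing from $\mathcal{H}_{(m+2)\omega_{0}}$ to all of $W^{1,2}(S^{2})$ is where the paper has to work: it proves $\mathcal{F}_{\omega_{0}}(u)\leq\mathcal{F}_{\omega_{0}}(P_{\omega_{0}}u)$ using monotonicity of $\mathcal{L}_{\omega_{0}}$, the differentiability of $\mathcal{E}_{\omega_{0}}\circ P_{\omega_{0}}$, and an integration by parts exploiting that $\omega_{P_{\omega_{0}}u}$ is supported on $\{u>P_{\omega_{0}}u\}$; this reduction is genuinely special to complex dimension one (Remark \ref{rem:j-f} exhibits smooth $u$ violating the analogous inequality when $n>1$), so it cannot be waved away. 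Without some such device your proof establishes the conjecture only for $u$ with $(m+2)\omega_{0}+dd^{c}u\geq0$. A secondary point to nail down is your assertion that the Dirichlet remainder is concave along geodesics: this does hold for $n=1$ (the geodesic equation gives, up to normalization, $\partial_{t}^{2}\int u_{t}dd^{c}u_{t}=-2\int\left|\bar{\partial}\dot{u}_{t}\right|_{\omega_{u_{t}}}^{2}\omega_{0}\leq0$), but it must be checked --- or avoided altogether, as the paper does, by running the concavity argument for $\mathcal{E}-\mathcal{L}$ alone and only afterwards adding the manifestly non-positive Dirichlet term.
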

As pointed out by Fang one motivation for this latter conjecture is
that, after introducing suitable numerical constants depending on
$m$ in the right hand side of \ref{eq:m-t-o intro}, it is implied
be an inequality whose formulation is obtained by replacing $\int_{X}e^{-u}\omega_{0}$
by the determinant of the \emph{Toeplitz operator with symbol $e^{-u}$}
acting on the space $H^{0}(\P^{1},\mathcal{O}(m)).$ In this latter
form the conjecture can be seen as a holomorphic analogue of an inequality
appearing in connection to the classical \emph{Szegö strong limit
theorem} on $S^{1}$ (see chapter 3.1 in \cite{g-sz}). The relation
between inequalities of Toeplitz operators on the sphere and upper
bounds on determinants of Dolbeault Laplacians is a direct consequence
of the anomaly formula of Bismut-Gillet-Soulé \cite{b-g-s}, which
generalizes Polyakov's formula referred to above. It should also be
pointed out that Toeplitz operators appear naturally in the \emph{Berezin-Toeplitz
quantization} of Kähler manifolds and in \emph{microlocal analysis
}\cite{a-e}. 

In this paper the positive solution of Fang's conjecture will be deduced
from a general result about the maximizers of a non-local functional
$\mathcal{F}_{\omega_{0}}$ defined on the space of all positively
curved Hermitian metrics on an ample line bundle $L$ over a Kähler
manifold $(X,\omega_{0}).$ In fact, a more precise inequality then
the one conjectured by Fang will be obtained (Corollary \ref{cor:moser})
which implies both Fang's conjecture \emph{and} the Moser-Trudinger-Onofri
inequality above (and hence the extremal properties of $\det\Delta_{g_{u}},$
as well). The inequality obtained is equivalent to the upper bound
\begin{equation}
\log(\frac{\det\Delta_{\bar{\partial}_{u}}^{(m)}}{\det\Delta_{\bar{\partial}_{0}}^{(m)}})\leq-\frac{1}{2}(\frac{1}{(m+2)})\int du\wedge d^{c}u(\leq0),\label{eq:sharp ineq for det in intr}\end{equation}
 where $\det\Delta_{\bar{\partial}_{u}}^{(m)}$ is the Dolbeault Laplacian
corresponding to $\mathcal{O}(m),$ which clearly implies Fang's conjecture
above. The extremals in the first inequality above will also be characterized. 

As pointed out in remark \ref{rem:sharp} the inequality in Corollary
\ref{cor:moser} is \emph{sharp} in a rather strong sense. Moreover,
in the limit when $m$ tends to infinity, while the function $u$
is kept fixed, the inequality becomes an asymptotic \emph{equality.}
As it turns out, this latter fact is essentially equivalent to a Central
Limit Theorem for a certain random point process on the sphere. This
process appears naturally as a \emph{random matrix model} and as a
\emph{one component plasma }in the statistical physics litterature
(see section \ref{sec:Application-to-determinantal}).

The functional $\mathcal{F}_{\omega_{0}}$ referred to above is an
{}``adjoint version'', introduced by Berndtsson, of Donaldson's
(normalized) $L$-functional and generalizes the Ding-Tian functional
whose critical points are Kähler-Einstien metrics. In particular,
new proofs of some results in Kähler geometry are also obtained, including
a lower bound on Mabuchi's $K-$energy and the uniqueness result for
Kähler-Einstein metrics on Fano manifolds of Bando-Mabuchi (see section
\ref{sub:Further-relations-to} for precise references). 

The relation between the inequality \ref{eq:m-t-o intro} and Kähler-Einstein
metrics of positive curvature in higher dimensions seems to first
have been suggested by Aubin \cite{au}. It should also be pointed
out that recently Rubinstein \cite{rub,rub0} gave a different complex
geometric proof of the inequality \ref{eq:m-t-o intro} using the
inverse Ricci operator and its relation to various energy functionals
in Kähler geometry. See also Müller-Wendland \cite{m} for a proof
of the result on extremals of determinants of the scalar Laplacian
using the Ricci flow. However, these latter methods seem to be less
well adapted to the non-local variational equations which appear in
the setting of Gilllet-Soulé's and Fang's conjectures.

Before turning to the precise statement of the main result we will
first introduce the general setup.

\subsection{Setup\label{sub:Setup}}

Let $L\rightarrow X$ be a holomorphic line bundle over a compact
complex manifold $X$ of complex dimension $n.$ Denote by $\mbox{Aut}_{0}(X,L)$
the group of automorphism of $(X,L)$ in the connected component of
the identity, modulo the elements that act as the identity on $X.$
The line bundle $L$ will be assumed \emph{ample,} i.e. there exists
a Kähler form $\omega_{0}$ in the first Chern class $c_{1}(L)$ and
a {}``weight'' $\psi_{0}$ on $L$ such that $\omega_{0}$ is the
normalized curvature $(1,1)-$form of the hermitian metric on $L$
locally represented as $h_{0}=e^{-\psi_{0}}.$ In this notation, the
space of all positively curved smooth hermitian metrics on $L$ may
be identified with the open convex subset \[
\mathcal{H}_{\omega_{0}}:=\{u:\,\omega_{u}:=dd^{c}u+\omega_{0}>0\}\]
 of \emph{$\mathcal{C}^{\infty}(X),$} where $d^{c}:=i(-\partial+\overline{\partial})/4\pi,$
so that $dd^{c}=\frac{i}{2\pi}\partial\overline{\partial}.$ Note
that, under this identification, the natural action of $\mbox{Aut}_{0}(X,L)$
on the space of all metrics on $L$ corresponds to the action $(u,F)\mapsto v:=F^{*}(\psi_{0}+u)-\psi_{0}$
so that, in particular, $\omega_{v}=F^{*}\omega_{u}.$ Occasionally,
we will also work with the closure $\overline{\mathcal{H}}_{\omega_{0}}$
of $\mathcal{H}_{\omega_{0}}$ in $L^{1}(X,\omega_{0}),$ coinciding
with the space of all $\omega_{0}-$plurisubharmonic functions on
$X,$ i.e. the space of all upper semi-continuous functions $u$ which
are absolutely integrable and such that $\omega_{u}\geq0$ as a $(1,1)-$current. 

We equip the $N-$dimensional complex vector space $H^{0}(X,L+K_{X})$
of all holomorphic sections of the adjoint bundle $L+K_{X}$ where
$K_{X}$ is the canonical line bundle on $X,$ with the Hermitian
product induced by $\psi_{0},$ i.e. \[
\left\langle s,s\right\rangle _{\psi_{0}}:=i^{n^{2}}\int_{X}s\wedge\bar{s}e^{-\psi_{0}},\]
identifying $s$ with a holomorphic $n-$form with values in $L.$
We will use additive notation for tensor products of line bundles.

\subsection{Statement of the main results}

Next, we will introduce the two functionals on $\mathcal{H}_{\omega_{0}}$
which will play a leading role in the following. First, consider the
following energy functional \begin{equation}
\mathcal{E}_{\omega_{0}}(u):=\frac{1}{(n+1)!V}\sum_{i=1}^{n}\int_{X}u(dd^{c}u+\omega_{0})^{j}\wedge(\omega_{0})^{n-j},\label{eq:def of e intro}\end{equation}
where $V:=\mbox{Vol}(\omega_{0})$ is the volume of $L,$ which seems
to first have appeared in the work of Mabuchi \cite{m2} and Aubin
\cite{au}in Kähler geometry ($\mathcal{E}_{\omega_{0}}=-F_{\omega_{0}}^{0}$
in the notation of \cite{ti}). It also appears in Arithmetic (Arakelov)
geometry as the top degree component of the secondary Bott-Chern class
of $L$ attached to the Chern character. 

The second functional $\mathcal{L}_{\omega_{0}}$ may be geometrically
defined as $\frac{1}{N}$ times the logarithm of the quotient of the
volumes of the unit-balls in $H^{0}(X,L+K_{X})$ defined by the Hermitian
products induced by the metrics $\psi_{0}$ and $\psi_{0}+u$ \cite{b-b}.
Concretely, this means that \begin{equation}
\mathcal{L}_{\omega_{0}}(u):=-\frac{1}{N}\log\mbox{det}(\left\langle s_{i},s_{j}\right\rangle _{\psi_{0}+u}),\label{eq:def of l intro}\end{equation}
where $1\leq i,j\leq N$ and $s_{i}$ is any given base in $H^{0}(X,L+K_{X})$
which is orthogonal wrt $\left\langle \cdot_{i},\cdot\right\rangle _{\psi_{0}}.$
The functional $\mathcal{L}_{\omega_{0}}(u)$ may also be invariantly
expressed as a \emph{Toeplitz determinant:}

\begin{equation}
\mathcal{L}_{\omega_{0}}(u):=-\frac{1}{N}\log\mbox{det}(T[e^{-u}]),\label{eq:l as toeplitz det intro}\end{equation}
 where $T[e^{-u}]$ is the \emph{Toeplitz operator with symbol $e^{-u}$}
defined as the linear operator $\Pi_{L}\circ e^{-u}\cdot$ on $H^{0}(X,L+K_{X}),$
expressed in terms of the the orthogonal projection $\Pi_{L}:\,\mathcal{C}^{\infty}(X)\rightarrow H^{0}(X,L+K_{X})$
(compare formula \ref{eq:toeplitz as k} in the appendix).  If $N=0$
we let $\mathcal{L}_{\omega_{0}}(u):=-\infty.$ The normalizations
are made so that the functional

\[
\mathcal{F}_{\omega_{0}}:=\mathcal{E}_{\omega_{0}}-\mathcal{L}_{\omega_{0}}\]
is invariant under addition of constants and hence descends to a functional
on the space of all Kähler metrics in $c_{1}(L).$ An element $u$
in $\mathcal{H}_{\omega_{0}}$ will be said to be \emph{critical}
(wrt $L+K_{X})$ if it is a critical point of the functional $\mathcal{F}_{\omega_{0}}$
on $\mathcal{H}_{\omega_{0}},$ i.e. if $u$ is a smooth solution
in $\mathcal{H}_{\omega_{0}}$ of the Euler-Lagrange equations $(d\mathcal{F}_{\omega_{0}})_{u}=0.$
These equations may be written as the highly non-linear Monge-Ampère
equation: \begin{equation}
\frac{1}{Vn!}(dd^{c}u+\omega_{0})^{n}=\beta(u),\label{eq:e-l for f}\end{equation}
 where $\beta(u)$ is the Bergman measure associated to $u$ (formula
\ref{eq:bergman meas} below). This latter measure depends on $u$
in a \emph{non-local} manner and is strictly positive precisely when
$L+K_{X}$ is \emph{globally generated,} i.e. when there, given any
point $x$ in $X,$ exists an element $s$ in $H^{0}(X,L+K_{X})$
such that $s(x)\neq0.$ For example, since $L$ is ample, this condition
holds when $L$ is replaced by $kL$ for $k$ sufficiently large. 

By definition, a critical point $u$ is a priori only a \emph{local}
extremum of $\mathcal{\mathcal{F}}_{\omega_{0}}.$ But the next theorem
relates \emph{global} maximizers of $\mathcal{\mathcal{F}}_{\omega_{0}}$
and its critical points:
\begin{thm}
\emph{\label{thm:main}Let $L$ be an ample line bundle such that
the adjoint line bundle $L+K_{X}$ is globally generated. Then the
absolute maximum of the functional} \textup{$\mathcal{\mathcal{F}}_{\omega_{0}}$
on $\mathcal{H}_{\omega_{0}}$ is attained at any critical point $u.$
Moreover, any smooth maximizer of $\mathcal{\mathcal{F}}_{\omega_{0}}$
on} $\overline{\mathcal{H}}_{\omega_{0}}$ \textup{is unique (up to
addition of constants) modulo the action of} $\mbox{Aut}_{0}(X,L).$
\emph{In particular, such a maximizer is critical.} 
\end{thm}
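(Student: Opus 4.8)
The plan is to exploit convexity along (weak) geodesics in the space $\mathcal{H}_{\omega_{0}}$, viewed as an infinite-dimensional symmetric space with the Mabuchi metric. I would first record the two structural facts that make everything work: that the energy $\mathcal{E}_{\omega_{0}}$ is \emph{affine} along geodesics, and that the Toeplitz-determinant functional $\mathcal{L}_{\omega_{0}}$ is \emph{convex} along geodesics. The first is a direct consequence of the Monge-Amp\`ere formalism: the first variation of $\mathcal{E}_{\omega_{0}}$ at $u$ in a direction $v$ is $\frac{1}{Vn!}\int_{X}v\,\omega_{u}^{n}$, and its second variation along a path vanishes precisely when the path solves the geodesic (homogeneous complex Monge-Amp\`ere) equation. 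The second fact is the deep analytic input and is exactly Berndtsson's theorem on the positivity of the $L^{2}$-metric on the direct image of $L+K_{X}$: a geodesic is the restriction of an $S^{1}$-invariant $\omega_{0}$-plurisubharmonic solution of the homogeneous Monge-Amp\`ere equation over an annulus, along which $-\log\det(\left\langle s_{i},s_{j}\right\rangle )$ is plurisubharmonic in the complexified parameter, hence convex in the geodesic time. Granting these, $\mathcal{F}_{\omega_{0}}=\mathcal{E}_{\omega_{0}}-\mathcal{L}_{\omega_{0}}$ is \emph{concave} along geodesics, and this single property drives both assertions of the theorem.

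For the statement that a critical point $u$ is a global maximizer, I would take any other $u_{1}\in\mathcal{H}_{\omega_{0}}$, join $u$ to $u_{1}$ by the geodesic $u_{t}$, and set $f(t):=\mathcal{F}_{\omega_{0}}(u_{t})$, which is concave on $[0,1]$. Since $u$ is critical, $(d\mathcal{F}_{\omega_{0}})_{u}$ vanishes in every direction, in particular on the one-sided tangent of the geodesic, so $f'(0^{+})=0$; concavity then gives $f(t)\le f(0)$, whence $\mathcal{F}_{\omega_{0}}(u_{1})\le\mathcal{F}_{\omega_{0}}(u)$. Reading the same first-variation computation at a smooth maximizer shows conversely that its first variation must vanish, i.e. that it solves the Euler-Lagrange equation \ref{eq:e-l for f}; since $L+K_{X}$ is globally generated the Bergman measure $\beta(u)$ is strictly positive, forcing $\omega_{u}^{n}>0$ and hence $\omega_{u}>0$, so a smooth maximizer automatically lies in the interior $\mathcal{H}_{\omega_{0}}$ and is critical.

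For uniqueness, suppose $u_{0}$ and $u_{1}$ are smooth maximizers, necessarily with the same maximal value $M$, and join them by the geodesic $u_{t}$. Concavity gives $f(t)\ge(1-t)M+tM=M$, while $f(t)\le M$ since $M$ is the maximum, so $f$ is \emph{constant}, hence affine, along the geodesic. As $\mathcal{E}_{\omega_{0}}$ is already affine, this forces $\mathcal{L}_{\omega_{0}}$ to be affine along $u_{t}$, i.e. equality in Berndtsson's convexity estimate. I would then invoke the rigidity part of that theorem: a flat direction of the $L^{2}$-metric forces the Monge-Amp\`ere foliation of the geodesic to be generated by a holomorphic vector field, so the complexified geodesic is the orbit of a one-parameter subgroup of $\mbox{Aut}_{0}(X,L)$. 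Consequently $u_{1}=F^{*}(\psi_{0}+u_{0})-\psi_{0}$ up to an additive constant for some $F\in\mbox{Aut}_{0}(X,L)$, which is the asserted uniqueness.

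The main obstacle lies entirely in the uniqueness part and is twofold. First, geodesics between two smooth endpoints are in general only $C^{1,1}$, so I must verify that $\mathcal{E}_{\omega_{0}}$ and $\mathcal{L}_{\omega_{0}}$ stay well defined, continuous, and retain their affine, resp. convex, behaviour along such weak geodesics, and that the one-sided derivative at the endpoints genuinely represents the first variation; the affine property of $\mathcal{E}_{\omega_{0}}$ and Berndtsson's convexity are robust enough to survive the limit, but the endpoint differentiability needs care. Second, and most delicate, is the equality case: extracting an honest one-parameter group of automorphisms from the degeneracy of the curvature of the direct image, and upgrading the a priori merely $C^{1,1}$ geodesic to the smooth orbit of that group. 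This rigidity step, the analogue of the Bando-Mabuchi argument, is where the real difficulty resides.
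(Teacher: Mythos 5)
Your overall strategy coincides with the paper's: concavity of $\mathcal{F}_{\omega_{0}}$ along weak geodesics (Berndtsson's convexity of $\mathcal{L}_{\omega_{0}}$ plus affineness of $\mathcal{E}_{\omega_{0}}$), a one-sided first-variation argument at a critical point, and rigidity in the equality case. But the two places you flag as ``needing care'' are exactly where the substance of the proof lies, and as written your argument has genuine gaps there. First, your claim that ``reading the same first-variation computation at a smooth maximizer shows that its first variation must vanish'' does not go through: a smooth maximizer on $\overline{\mathcal{H}}_{\omega_{0}}$ may sit where $\omega_{u}$ degenerates, and then the affine perturbation $u+tv$ leaves the constraint set $\overline{\mathcal{H}}_{\omega_{0}}$, so stationarity alone yields no Euler--Lagrange equation. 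The missing idea is the envelope projection $P_{\omega_{0}}$ of formula \ref{eq:proj oper intro}: by Theorem \ref{thm:deriv of composed } the composition $\mathcal{E}_{\omega_{0}}\circ P_{\omega_{0}}$ is G\^ateaux differentiable on all of $C^{0}(X)$ with differential $\omega_{P_{\omega_{0}}u}^{n}/n!$, and since $\mathcal{L}_{\omega_{0}}$ is increasing one has $\mathcal{E}_{\omega_{0}}(P_{\omega_{0}}(u+tv))-\mathcal{L}_{\omega_{0}}(u+tv)\le\mathcal{F}_{\omega_{0}}(P_{\omega_{0}}(u+tv))\le\mathcal{F}_{\omega_{0}}(u)$, which produces an unconstrained critical point of a differentiable function and hence the equation \ref{eq:e-l for f}. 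Relatedly, at $t=0$ you assert $f'(0^{+})=0$ by the chain rule along the one-sided tangent, but the geodesic is not smooth; the correct statement is only $f'(0^{+})\le 0$ (which suffices), obtained by combining concavity of $\mathcal{E}_{\omega_{0}}$ with respect to the affine structure with monotone convergence of the difference quotients $(u_{t}-u_{0})/t$ decreasing to $v_{0}$.

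Second, invoking the rigidity part of Berndtsson's theorem when $\mathcal{L}_{\omega_{0}}(u_{t})$ is affine requires $u_{t}$ to be a smooth psh path with $\omega_{u_{t}}>0$ --- otherwise the vector field $V_{t}$ defined by $\omega_{u_{t}}(V_{t},\cdot)=\bar{\partial}_{X}(\partial_{t}u)$ cannot even be written down --- whereas a priori one only has Chen's $\mathcal{C}_{\C}^{1,1}$ regularity. You correctly name this as the main difficulty but supply no argument, and it is not a routine limit: one must first establish the Euler--Lagrange equation for each $u_{t}$ (again via $P_{\omega_{0}}$), which turns the homogeneous Monge--Amp\`ere equation into the local equation $\det(\partial_{\zeta_{i}}\partial_{\bar{\zeta}_{j}}U)=e^{-U}\rho$ with $\rho>0$ smooth; then Blocki's complex Evans--Krylov theorem plus bootstrapping gives interior smoothness of $U$; the same equation combined with Chen's upper bound yields the uniform two-sided estimate $1/C'\,\omega_{0}\le\omega_{u_{t}}\le C'\omega_{0}$; and finally a normal-families argument on the flow $S_{t}$ is needed to pass to the endpoint $t=1$, where smoothness in $t$ is not known. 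Without these steps the uniqueness statement is not proved.
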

In the case when the ample line bundle $L=-K_{X},$ so that $X$ is
a Fano manifold, the space $H^{0}(X,L+K_{X})$ is one-dimensional
and hence $\mathcal{L}_{\omega_{0}}(u)=-\frac{1}{N}\log\int e^{-(u+\psi_{0})}.$
Then it is well-known that any critical point may be identified with
a Kähler-Einstein metric on $X$. 

It should be emphasized that the \emph{existence} of critical points
of $\mathcal{\mathcal{F}}_{\omega_{0}}$ is a very difficult issue
closely related to conjectures of Yau, Tian, Donaldson and others
in Kähler geometry \cite{ti,do1,th}. Even in the case $L=-K_{X}$
there are well-known examples already on complex surfaces, where critical
points do not exist.

Next, assume that $(X,L)$ is \emph{$K-$homogenous,} i.e. that $X$
admits a transitive action by a compact semi-simple Lie group $K,$
whose action on $X$ lifts to $L.$ We will then take $\omega_{0}$
as the unique Kähler form in $c_{1}(L)$ which is invariant under
the action of $K$ on $X.$ 
\begin{cor}
\label{cor:homeg}Let $L\rightarrow X$ be a $K-$homogenous ample
holomorphic line bundle over a compact complex manifold $X$ and denote
by $\omega_{0}$ be the unique $K-$invariant Kähler metric in $c_{1}(L).$
Then, for any function $u$ in $\mathcal{H}_{\omega_{0}}$ \[
-\mathcal{L}_{\omega_{0}}(u)\leq-\mathcal{E}_{\omega_{0}}(u)\]
 with equality iff the function $u$ is constant, modulo the action
of $\mbox{Aut}_{0}(X,L).$ 
\end{cor}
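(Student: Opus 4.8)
The plan is to deduce the stated inequality directly from Theorem \ref{thm:main} by exhibiting the distinguished metric $u\equiv 0$ (that is, $\omega_{0}$ itself) as a critical point of $\mathcal{F}_{\omega_{0}}$ and evaluating $\mathcal{F}_{\omega_{0}}$ there. First observe that the asserted inequality $-\mathcal{L}_{\omega_{0}}(u)\leq-\mathcal{E}_{\omega_{0}}(u)$ is nothing but $\mathcal{F}_{\omega_{0}}(u):=\mathcal{E}_{\omega_{0}}(u)-\mathcal{L}_{\omega_{0}}(u)\leq 0$, and that $\mathcal{F}_{\omega_{0}}(0)=0$: indeed, by \eqref{eq:def of e intro} every term of $\mathcal{E}_{\omega_{0}}(u)$ carries an explicit factor $u$, so $\mathcal{E}_{\omega_{0}}(0)=0$, while in \eqref{eq:def of l intro} one may choose the basis $(s_{i})$ orthonormal for $\langle\cdot,\cdot\rangle_{\psi_{0}}$, making the Gram matrix the identity at $u=0$, whence $\mathcal{L}_{\omega_{0}}(0)=0$. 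Thus the corollary will follow once we know that $\mathcal{F}_{\omega_{0}}$ attains its absolute maximum at $u=0$.

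The heart of the matter is to verify that $u=0$ is critical, i.e. that it solves the Euler--Lagrange equation \eqref{eq:e-l for f}, which at $u=0$ reads $\tfrac{1}{Vn!}\omega_{0}^{n}=\beta(0)$. Here I would use the $K$-homogeneity decisively. Since $\omega_{0}$ is the unique $K$-invariant form in $c_{1}(L)$, its weight $\psi_{0}$ is $K$-invariant, so the compact group $K$ acts on $H^{0}(X,L+K_{X})$ preserving the Hermitian product $\langle\cdot,\cdot\rangle_{\psi_{0}}$. Consequently the Bergman measure $\beta(0)$ constructed from this data (formula \eqref{eq:bergman meas}) is $K$-invariant. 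But $X=K/H$ is a homogeneous space, on which a $K$-invariant probability measure is unique; since both $\beta(0)$ and the normalized volume $\tfrac{1}{Vn!}\omega_{0}^{n}$ are $K$-invariant probability measures, they coincide. This is precisely \eqref{eq:e-l for f} at $u=0$, so $u=0$ is critical. The same invariance shows that the Bergman density $\sum_{i}|s_{i}|^{2}_{\psi_{0}}$ is a $K$-invariant, hence constant and (for $N\geq 1$) strictly positive function, so that $L+K_{X}$ is globally generated and the hypotheses of Theorem \ref{thm:main} are indeed met.

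With $u=0$ established as a critical point, Theorem \ref{thm:main} gives at once that $\max_{\mathcal{H}_{\omega_{0}}}\mathcal{F}_{\omega_{0}}=\mathcal{F}_{\omega_{0}}(0)=0$, which is exactly the desired inequality. For the equality case I would invoke the uniqueness clause of the same theorem: any $u\in\mathcal{H}_{\omega_{0}}$ achieving equality is a smooth maximizer of $\mathcal{F}_{\omega_{0}}$ and hence agrees with $u=0$ up to an additive constant and the action of $\mbox{Aut}_{0}(X,L)$; recalling that this action sends $0$ to $F^{*}(\psi_{0})-\psi_{0}$, equality holds precisely when $u$ is constant modulo $\mbox{Aut}_{0}(X,L)$, as claimed.

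I expect the only genuinely delicate point to be the criticality of $u=0$, and within it the passage from the $K$-invariance of $\beta(0)$ to its identification with the normalized invariant volume. The invariance itself is formal once one tracks the $K$-action through the definition \eqref{eq:bergman meas} of the Bergman measure, and the uniqueness of the invariant probability measure on $K/H$ then does the rest. The one hypothesis that must not be overlooked is that $L+K_{X}$ be globally generated, so that Theorem \ref{thm:main} applies and $\beta(0)$ is a genuine, nowhere-vanishing probability measure; in the homogeneous setting this follows from the constancy of the invariant Bergman density provided $N\geq 1$.
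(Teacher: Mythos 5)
Your proposal is correct and follows essentially the same route as the paper: verify that $u=0$ is critical by identifying the $K$-invariant probability measures $\omega_{0}^{n}/(n!V)$ and $\beta(0)$ on the homogeneous space $X$, check global generation of $L+K_{X}$ from the constancy of the invariant Bergman density, and then apply both clauses of Theorem \ref{thm:main}. The only cosmetic difference is that the paper argues via the $K$-invariance (hence constancy) of the density ratio $(\omega_{0}^{n}/n!)/\beta(0)$ rather than via abstract uniqueness of the invariant probability measure on $K/H$; these are the same observation.
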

Surprisingly, specializing to the case when $X$ is a complex curve
(i.e. $n=1)$ allows one to take $u$ as \emph{any} smooth function
(which is \emph{not }true in higher dimensions, as shown in \cite{rub0}
in the case when $L=-K_{X};$ see remark \ref{rem:j-f}). More generally,
we can then take $u$ to be in the Sobolev space $W^{2,1}(X)$ of
all functions $u$ on $X$ such that $u$ and its differential $du$
are square integrable. 

We will next consider the homegenous case, i.e. when $X=\P^{1},$
the complex projective line (i.e. topologically $X=S^{2},$ the two-sphere)
and hence $K=SU(2).$ In this case any ample line bundle $L$ may
be written as $\mathcal{O}(k),$ where $k$ is a positive integer
and $H^{0}(\P^{1},\mathcal{O}(k)+K_{\P^{1}})=H^{0}(\P^{1},\mathcal{O}(k-2))$
may be identified with the space of all polynomials of at most degree
$m:=k-2$ on the affine piece $\C$ in $\P^{1}$ (assuming $k\geq2).$
Moreover, if we take $\psi_{0,k}(z)=k\log(1+z\bar{z})$ as the fixed
invariant weight on $\mathcal{O}(k),$ in the usual trivialization
over the affine piece $\C,$ then, under the identification above,
the Hermitian product on $H^{0}(\P^{1},\mathcal{O}(k)+K_{\P^{1}})$
may be written as \begin{equation}
\left\langle p_{m},p_{m}\right\rangle _{\psi_{0}+u}:=\int_{\C}\frac{\left|p_{m}\right|^{2}}{(1+z\bar{z})^{m}}e^{-u}\omega_{0}\label{eq:scalar pr on s2}\end{equation}
for $p_{m}(z)$ a polynomial on $\mbox{\C}$ of degree at most $m$
(compare section \ref{sub:Explicit-expression}). Hence, \[
\mathcal{L}_{\omega_{0,k}}(u):=(m+1)\mathcal{L}_{m}(u):=-\log\mbox{det}(c_{ij}\int_{\C}\frac{z^{i}\bar{z}^{j}}{(1+z\bar{z})^{m}}e^{-u}\omega_{0}),\]
 where $i,j=0,...,m$ and $1/c_{ij}=(m+1)\binom{m}{i}\binom{m}{j}.$ 
\begin{cor}
\label{cor:moser}Let $u$ be a function in the Sobolev space $W^{2,1}(S^{2})$
on the two-sphere $S^{2}$ and denote by $\omega_{0}$ the volume
form corresponding to the metric on $S^{2}$ with constant curvature
and volume one. Then \[
-\mathcal{L}_{m}(u)\leq-(m+1)\int_{S^{2}}u\omega_{0}+(\frac{m+1}{m+2})\frac{1}{2}\int_{S^{2}}du\wedge du^{c}\]
with equality iff there exists a Möbius transformation $M$ of $S^{2}$
such that $\omega_{u}=M^{*}\omega_{0}.$
\end{cor}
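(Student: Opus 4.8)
The plan is to deduce Corollary \ref{cor:moser} from the homogeneous inequality of Corollary \ref{cor:homeg}, applied to $X=\P^{1}$ with $K=SU(2)$ and $L=\mathcal{O}(k)$, $m=k-2$, and then to render the two functionals $\mathcal{E}_{\omega_{0}}$ and $\mathcal{L}_{\omega_{0}}$ explicit in this curve case. Corollary \ref{cor:homeg} supplies $-\mathcal{L}_{\omega_{0}}(u)\le-\mathcal{E}_{\omega_{0}}(u)$ for every $u\in\mathcal{H}_{\omega_{0}}$, with equality precisely when $u$ is constant modulo the action of $\mathrm{Aut}_{0}(X,L)$. Since $H^{0}(\P^{1},\mathcal{O}(k)+K_{\P^{1}})=H^{0}(\P^{1},\mathcal{O}(m))$ has dimension $N=m+1$ and the Hermitian product is the one displayed in \ref{eq:scalar pr on s2}, the functional $\mathcal{L}_{\omega_{0}}$ is exactly the Toeplitz log-determinant $\mathcal{L}_{m}$ of the statement; this identification is immediate once one rewrites $i\,dz\wedge d\bar{z}$ in terms of the volume-one Fubini--Study form $\omega_{0}$ and absorbs the two powers of $(1+z\bar{z})$ contributed by the canonical factor $K_{\P^{1}}$, which is precisely what turns the exponent $k$ into $m=k-2$.

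The second step is to evaluate $\mathcal{E}_{\omega_{0}}$ when $n=1$. Here the sum defining $\mathcal{E}_{\omega_{0}}$ in \ref{eq:def of e intro} collapses to two terms, giving, up to the normalizing constants, $c_{1}\int_{S^{2}}u\,\omega_{0}+c_{2}\int_{S^{2}}u\,dd^{c}u$, and an integration by parts $\int_{S^{2}}u\,dd^{c}u=-\int_{S^{2}}du\wedge d^{c}u$, valid on the closed surface $S^{2}$, converts the second term into minus the Dirichlet energy. Matching the normalizations fixed in the definitions above---the rank $N=m+1$ entering $\mathcal{L}_{\omega_{0}}$ and the degree $V=\deg\mathcal{O}(k)=m+2$ entering $\mathcal{E}_{\omega_{0}}$---then produces exactly the coefficients $(m+1)$ in front of $\int_{S^{2}}u\,\omega_{0}$ and $\tfrac{m+1}{m+2}$ in front of $\tfrac{1}{2}\int_{S^{2}}du\wedge d^{c}u$. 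This is where the sharp constant $\tfrac{m+1}{m+2}$, and hence the sharpness recorded in Remark \ref{rem:sharp}, is read off; I regard this step as routine bookkeeping once the two conventions are pinned down.

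The genuinely substantive step is the passage from $u\in\mathcal{H}_{\omega_{0}}$, where $\omega_{u}>0$, to an arbitrary $u$ in the Sobolev space $W^{2,1}(S^{2})$, since Corollary \ref{cor:homeg} only delivers the inequality on the positivity cone. My plan is to introduce the $\omega_{0}$-plurisubharmonic projection $Pu:=\sup\{v\in\overline{\mathcal{H}}_{\omega_{0}}:v\le u\}$ and to compare $u$ with $Pu$ through the two functionals separately. On the one hand $Pu\le u$ forces $e^{-Pu}\ge e^{-u}$, hence a domination of the Gram matrices and thus $\mathcal{L}_{\omega_{0}}(u)\ge\mathcal{L}_{\omega_{0}}(Pu)$. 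On the other hand, writing $\phi:=u-Pu\ge0$ and differentiating $\mathcal{E}_{\omega_{0}}$ along the affine path $Pu+t\phi$, the first derivative at $t=0$ is a multiple of $\int_{S^{2}}\phi\,(dd^{c}Pu+\omega_{0})$, which vanishes because the Monge--Amp\`ere measure of $Pu$ is carried by the contact set $\{Pu=u\}=\{\phi=0\}$, while the second derivative equals a negative multiple of $\int_{S^{2}}d\phi\wedge d^{c}\phi\le0$; hence $\mathcal{E}_{\omega_{0}}(u)\le\mathcal{E}_{\omega_{0}}(Pu)$. Combining the two comparisons with $\mathcal{E}_{\omega_{0}}(Pu)\le\mathcal{L}_{\omega_{0}}(Pu)$, which is Corollary \ref{cor:homeg} on $\overline{\mathcal{H}}_{\omega_{0}}$ by approximation, yields $-\mathcal{L}_{\omega_{0}}(u)\le-\mathcal{E}_{\omega_{0}}(u)$ for all $u$. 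I expect the concavity of $t\mapsto\mathcal{E}_{\omega_{0}}(Pu+t\phi)$ to be the crux, and the main obstacle of the whole argument: it holds unconditionally only because $n=1$ removes the factor $\omega_{Pu+t\phi}^{n-1}$, whose positivity off the cone would otherwise be needed---this is exactly the mechanism behind the breakdown in higher dimensions recorded in Remark \ref{rem:j-f}.

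Finally I would extract the equality case by running the comparison backwards. Equality forces equality in the concavity estimate, so $\int_{S^{2}}d\phi\wedge d^{c}\phi=0$ and $\phi$ is constant; since $\phi\ge0$ vanishes on the contact set, $\phi\equiv0$ and $u=Pu\in\overline{\mathcal{H}}_{\omega_{0}}$. Then $u$ is a genuine maximizer on $\overline{\mathcal{H}}_{\omega_{0}}$, so the uniqueness part of Theorem \ref{thm:main} applies: $u$ agrees, up to an additive constant and the action of $\mathrm{Aut}_{0}(\P^{1},\mathcal{O}(k))=PGL(2,\C)$, with the constant metric. Translating through the action $(u,F)\mapsto F^{*}(\psi_{0}+u)-\psi_{0}$, $\omega_{v}=F^{*}\omega_{u}$ recorded in the Setup, this says precisely that $\omega_{u}=M^{*}\omega_{0}$ for some M\"obius transformation $M$ of $S^{2}$, which is the stated equality condition.
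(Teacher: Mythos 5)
Your overall route is the same as the paper's: reduce to Corollary \ref{cor:homeg} via the envelope $P_{\omega_{0}}u$, using that $\mathcal{L}_{\omega_{0}}$ is increasing (so $\mathcal{L}_{\omega_{0}}(u)\geq\mathcal{L}_{\omega_{0}}(P_{\omega_{0}}u)$) and that $\mathcal{E}_{\omega_{0}}(u)-\mathcal{E}_{\omega_{0}}(P_{\omega_{0}}u)=-\frac{1}{2}\int d\phi\wedge d^{c}\phi\leq0$ because the measure $\omega_{P_{\omega_{0}}u}$ does not charge $\{u>P_{\omega_{0}}u\}$. Your Taylor expansion of $\mathcal{E}_{\omega_{0}}$ along the affine segment is exactly the paper's cocycle computation written differently, and your observation that concavity of this segment is unconditional only for $n=1$ is the correct mechanism (Remark \ref{rem:j-f}). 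The explicit identification of $\mathcal{E}_{\omega_{0}}$ and $\mathcal{L}_{\omega_{0}}$ with the quantities in the statement is indeed the bookkeeping carried out in section \ref{sub:Explicit-expression}.

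There is, however, one genuine gap, in the equality case. Having shown that equality forces $\phi\equiv0$, so that $u=P_{\omega_{0}}u\in\overline{\mathcal{H}}_{\omega_{0}}\cap W^{1,2}$, you invoke the uniqueness part of Theorem \ref{thm:main}. But that statement (and the equality case of Corollary \ref{cor:homeg}) applies only to \emph{smooth} maximizers --- indeed the argument behind it needs $\omega_{u}>0$ to define the vector field $V_{t}$ in \ref{eq:int multip} --- and at this stage you only know $u\in W^{1,2}$ with $\omega_{u}\geq0$ as a current. The missing step is a regularity argument: because $W^{1,2}(S^{2})$ is a linear space containing $\mathcal{C}^{\infty}$, a maximizer satisfies the Euler--Lagrange equation $\omega_{0}+dd^{c}u=\beta_{u}$, and since $\beta_{u}=e^{-u}\rho$ with $\rho$ smooth and strictly positive, elliptic bootstrapping for the Laplacian shows $u$ is smooth with $\omega_{u}=\beta_{u}>0$; only then can the equality case of Corollary \ref{cor:homeg} be applied to produce the M\"obius transformation. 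The paper spells this out explicitly, and without it your final paragraph does not go through.
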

The case when $m=0,$ so that $-\mathcal{L}_{m}(u)=\log\int_{\C}e^{-u}\omega_{0},$
is precisely the celebrated Moser-Trudinger-Onofri inequality \ref{eq:m-t-o intro}.
The reduction of the proof of Corollary \ref{cor:moser} to Corollary
\ref{cor:homeg}, is based on properties of the projection operator
$P_{\omega}$ (formula \ref{eq:proj oper intro}).

\subsubsection{\label{sub:Application-to-determinants}Application to determinants
of $\bar{\partial}-$Laplace operators and Analytic Torsion}

Consider again the case when $X=\P^{1}$ is the complex projective
line equipped with the standard Kähler form $\omega_{0}.$ Any function
$u$ corresponds to a metric on $\mathcal{O}(m)$ with weight $m\psi_{0}+u,$
where $m$ is a fixed non-negative integer. Hence, the pair $(\omega_{0,}u)$
induces natural Hilbert norms on the space $\Omega^{0,q}(\mathcal{O}(m))$
of smooth $(0,q)-$forms with values in $\mathcal{O}(m),$ where $q=0,1.$
Denote by $\Delta_{\bar{\partial}_{u}}^{(m)}$ the corresponding $\bar{\partial}-$Laplace
(Dolbeault) operator acting on the space $\Omega^{0}(\mathcal{O}(m),$
i.e. $\Delta_{\bar{\partial}_{u}}^{(m)}=\bar{\partial}^{^{*}}\bar{\partial},$
where $\bar{\partial}^{^{*}}$ is the formal adjoint of the $\bar{\partial}-$operator
\[
\bar{\partial}:\,\,\Omega^{0,0}(\mathcal{O}(m))\rightarrow\Omega^{0,1}(\mathcal{O}(m))\]
Note that $\bar{\partial}^{^{*}}$ may be expressed in terms of the
adjoint $\bar{\partial}^{^{*},0}$ induced by $u=0$ as \[
\bar{\partial}^{^{*}}=e^{u}\bar{\partial}^{^{*},0}e^{-u}\]
 The zeta function regularized\emph{ determinant} of the operator
obtained by restricting $\Delta_{\bar{\partial}_{u}}^{(m)}$ to the
orthogonal complement of its kernel will be denoted by $\mbox{\ensuremath{\det}}\Delta_{\bar{\partial}_{u}}^{(m)}$
(compare \cite{b-g-s}). Given the result in the previous corollary,
the anomaly formula (i.e. a family Riemann-Roch-Grothendieck theorem)
of Bismut-Gillet-Soulé \cite{b-g-s} now implies the following positive
solution of Fang's conjecture
\begin{cor}
\label{cor:max of det}Given the line bundle $\mathcal{O}(m)\rightarrow\P^{1},$
the corresponding functional \[
u\mapsto\mbox{\ensuremath{\det\Delta_{\bar{\partial}_{u}}^{(m)}}}\]
on the space of all smooth functions $u$ on $\P^{1}$ attains its
maximum precisely for $u$ a constant function. 
\end{cor}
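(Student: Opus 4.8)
The plan is to derive Corollary~\ref{cor:max of det} from the sharp inequality of Corollary~\ref{cor:moser} by passing through the Bismut-Gillet-Soulé anomaly formula, which converts the variation of the regularized determinant of $\Delta_{\bar{\partial}_u}^{(m)}$ into the local and non-local functionals $\mathcal{E}_{\omega_0}$ and $\mathcal{L}_{\omega_0}$ already appearing in the excerpt. First I would recall that the anomaly formula expresses the difference $\log\det\Delta_{\bar{\partial}_u}^{(m)}-\log\det\Delta_{\bar{\partial}_0}^{(m)}$ as a sum of two contributions: the change in the Quillen metric (analytic torsion combined with the $L^2$-metric on the determinant of cohomology) and an explicit integral of secondary Bott-Chern characteristic forms. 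For the line bundle $\mathcal{O}(m)$ over $\mathbb{P}^1$, this should be worked out to show that the right-hand side is, up to an additive constant independent of $u$, precisely $\mathcal{L}_m(u)-(m+1)\int_{S^2}u\,\omega_0+\bigl(\tfrac{m+1}{m+2}\bigr)\tfrac12\int_{S^2}du\wedge d^cu$, namely the difference of the two sides of the inequality in Corollary~\ref{cor:moser}. The Toeplitz determinant $\det(T[e^{-u}])$ enters through the $L^2$-metric on $\det H^0(\mathbb{P}^1,\mathcal{O}(m)+K_{\mathbb{P}^1})$, while the Bott-Chern term supplies the energy expression $\mathcal{E}_{\omega_0}$, and the torsion contribution cancels in the normalized quotient since the metric on $\mathcal{O}(m)$ changes only through the conformal factor $e^{-u}$.

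Granting that identification, the argument becomes purely formal. I would write
\begin{equation}
\log\frac{\det\Delta_{\bar{\partial}_u}^{(m)}}{\det\Delta_{\bar{\partial}_0}^{(m)}}
=C-\Bigl(-\mathcal{L}_m(u)+(m+1)\int_{S^2}u\,\omega_0-\bigl(\tfrac{m+1}{m+2}\bigr)\tfrac12\int_{S^2}du\wedge d^cu\Bigr),
\end{equation}
with $C$ a constant (the value at $u\equiv 0$). By Corollary~\ref{cor:moser} the parenthesized expression is nonnegative, whence $\det\Delta_{\bar{\partial}_u}^{(m)}\le\det\Delta_{\bar{\partial}_0}^{(m)}$, and this is exactly the sharp inequality~\eqref{eq:sharp ineq for det in intr} advertised in the introduction. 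For the characterization of maximizers I would invoke the equality clause of Corollary~\ref{cor:moser}: the maximum is attained iff $\omega_u=M^*\omega_0$ for some Möbius transformation $M$. Since the determinant functional is invariant under the action of $\mathrm{Aut}_0(\mathbb{P}^1,\mathcal{O}(m))=\mathrm{PGL}(2,\mathbb{C})$, pulling back by $M^{-1}$ reduces any such maximizer to $u$ constant, so the maximum is achieved precisely at constant $u$, giving Fang's conjecture.

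The main obstacle I anticipate is the precise bookkeeping in the anomaly formula, not the variational inequality itself. One must verify that the analytic torsion of $(\mathbb{P}^1,\mathcal{O}(m))$ genuinely drops out of the normalized ratio when only the fibre metric (equivalently the symbol $e^{-u}$) varies and the base Kähler form $\omega_0$ is held fixed; this is where the Bismut-Gillet-Soulé formalism must be applied with care, tracking which terms depend on the Kähler metric of the base versus the Hermitian metric of the bundle. A subtlety is the correct identification of the combinatorial constant $\tfrac{m+1}{m+2}=\tfrac{m+1}{k}$ arising from the Bott-Chern secondary class of the Chern character of $\mathcal{O}(m)$, which must match the normalization $\mathcal{E}_{\omega_0}$ in~\eqref{eq:def of e intro}; getting the numerical factor in front of $\int du\wedge d^cu$ exactly right is the delicate computational point. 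Once the anomaly formula is pinned down as an equality of the stated form, the inequality and its equality case follow immediately from Corollary~\ref{cor:moser}, so the theorem is essentially a translation of the already-established sharp functional inequality into spectral language.
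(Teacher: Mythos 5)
Your overall route (Bismut--Gillet--Soul\'e anomaly formula plus Corollary \ref{cor:moser}) is the same as the paper's, but the identity you assert for the anomaly is wrong, and the error propagates into the equality-case analysis. The anomaly formula gives
\[
\log\frac{\det\Delta^{(m)}_{\bar\partial_u}}{\det\Delta^{(m)}_{\bar\partial_0}}
=\frac12\int_{S^2}u\,dd^cu+(m+1)\int_{S^2}u\,\omega_0-\mathcal{L}_m(u),
\]
the quadratic term coming from the degree-two part of the Bott--Chern class with the universal coefficient $\tfrac12$ (equivalently $-\tfrac12\int du\wedge d^cu$), \emph{not} with the coefficient $\tfrac{m+1}{m+2}\tfrac12$ you propose. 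Consequently the right-hand side of the anomaly formula is not, up to an additive constant, the defect in Corollary \ref{cor:moser}: it differs from it by the $u$-dependent term $-\tfrac{1}{2(m+2)}\int du\wedge d^cu$. Feeding Corollary \ref{cor:moser} into the correct identity yields
\[
\log\frac{\det\Delta^{(m)}_{\bar\partial_u}}{\det\Delta^{(m)}_{\bar\partial_0}}\le-\frac{1}{2}\Bigl(1-\frac{m+1}{m+2}\Bigr)\int_{S^2}du\wedge d^cu=-\frac{1}{2(m+2)}\int_{S^2}du\wedge d^cu\le0,
\]
and it is this residual strictly negative-definite term that forces $du=0$, i.e. $u$ constant, at the maximum. (There is also a sign slip in your display: by Corollary \ref{cor:moser} the parenthesized expression is nonpositive, so as written you would obtain a lower bound on the determinant rather than the upper bound you need.)

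The second problem is your treatment of the equality case. If your identity were correct, the maximizers of the determinant would coincide with the maximizers of $\mathcal{F}_{\omega_0}$, i.e. the full M\"obius orbit of $u=0$, and no amount of ``pulling back by $M^{-1}$'' changes that: invariance of a functional under a group action means its set of maximizers is a union of orbits, so the maximum would be attained at every $u$ with $\omega_u=M^*\omega_0$, not only at constants. In fact $\det\Delta^{(m)}_{\bar\partial_u}$ is \emph{not} invariant under $\mathrm{PGL}(2,\C)$ --- the Dolbeault Laplacian depends on the fixed base metric $\omega_0$ through the norm on $(0,1)$-forms, and only the isometries in $SU(2)$ and translations of $u$ by constants preserve it. The conclusion that constants are the \emph{only} maximizers is possible precisely because the determinant sees the extra term $-\tfrac{1}{2(m+2)}\int du\wedge d^cu$ that breaks the M\"obius symmetry present in Corollary \ref{cor:moser}; with the correct anomaly identity the equality case is immediate and no group-theoretic reduction is needed.
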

In fact, the proof of the previous Corollary, will give the stronger
statement that the inequality \ref{eq:sharp ineq for det in intr}
for $\mbox{\ensuremath{\det}}\Delta_{\bar{\partial}_{u}}$ stated
in the introduction holds and that this latter inequality is equivalent
to Corollary \ref{cor:moser}. Note that a direct consequence of the
previous corollary is the following reponse to a variant of Kac's
classical question {}``Can one hear the shape of a drum?\textquotedbl{}
\cite{ka}: if the $\bar{\partial}-$Laplacian on \emph{some} power
$\mathcal{O}(m)$ induced by a smooth metric $h$ on $\mathcal{O}(1)\rightarrow\P^{1}$
has the same spectrum (including multiplicities) as the $\bar{\partial}-$Laplacian
induced by the standard $SU(2)$ invariant metric $h_{0},$ then $h=Ch_{0}$
for a positive number $C.$ 

Finally it should be pointed out that in the general case of an ample
line bundle $L$ Theorem \ref{thm:main} yields a bound on the twisted
\emph{Ray-Singer analytic torsion} (see for example \cite{b-g-s})
associated to a semi-positively curved metric on $L$ in terms of
the corresponding \emph{Quillen metric} and the functional $\mathcal{E}.$
This is a direct consquence of the fact that $L+K_{X}$ is ample,
so that the higher cohomology groups $H^{q}(X,L+K_{X}),$ $q\geq1,$
vanish, combined with the anomaly formula of Bismut-Gillet-Soulé \cite{b-g-s}.
For the sake of brevity the details are omitted.

\subsection{\label{sub:Further-relations-to}Further relations to previous results}

In this case when $L=-K_{X}$ the first statement of Theorem \ref{thm:main}
is a result of Ding-Tian\cite{d-t} and the {}``uniqueness'' of
critical points (i.e. Kähler-Einstein metrics in this case) was proved
earlier by Bando-Mabuchi \cite{b-m}. See \cite{bbgz} for a generalization
of this latter result to funtions of {}``finite energy'', in the
case when $\mbox{Aut}_{0}(X,L)$ is discrete (compare remark \ref{rem:finite energy}). 

The extremal property of the critical points in Theorem \ref{thm:main}
can also be seen as an analog of a result of Donaldson (Theorem 2
in \cite{don1}) who furthermore assumed that $\mbox{Aut}_{0}(X,L)$
is discrete. In this latter setting the role of the space $H^{0}(X,L+K_{X})$
is played by $H^{0}(X,L)$ equipped with the scalar products induced
by the weight $\psi_{0}+u$ and the integration measure $(\omega_{u})^{n}/n!$
Note however that in Donaldson's setting the functional corresponding
to $\mathcal{F}_{\omega_{0}}$ is \emph{minimized} on its critical
points (compare section \ref{sub:Comparison-with-Donaldson's} and
the discussion in section 5 in \cite{bern2}). In the terminilogy
of \cite{don1} these latter critical points correpond to \emph{balanced
metrics.} Donaldson used his result, combined with the deep convergence
results in \cite{do1} for balanced metrics, in the limit when $L$
is replaced by a large tensor power, to prove a lower bound on Mabuchi's
K-energy functional. It will be shown in section \ref{sec:Convergence-towards-Mabuchi's}
how to deduce this latter result more directly from Theorem \ref{thm:main}
above.

It should also be pointed out that the inequality proved by Donaldson
corresponds to a \emph{lower} bound on $\mathcal{\mathcal{F}}_{\omega_{0}}(u)$
in the present setting, which however will depend on $u$ through
its volume form $(\omega_{u})^{n}/n!$ (see the end of section\ref{sub:Comparison-with-Donaldson's}
).

\subsection{Concerning the proof of Theorem \ref{thm:main}.}

The proof of Theorem \ref{thm:main} relies on the recent work \cite{bern2}
of Berndtsson combined with some global pluripotential theory developed
in \cite{b-b,b-d} (see also \cite{bbgz} for the case $L=-K_{X}$).
On one hand \cite{bern2} gives that $\mathcal{F}_{\omega_{0}}$ is
{}``geodesically'' convex wrt the Riemann metric on the space $\mathcal{H}_{\omega_{0}}$
introduced by Mabuchi \cite{m}. In turn, this fact is used to show
that any critical point maximizes $\mathcal{F}_{\omega_{0}}$ on $\mathcal{H}_{\omega_{0}},$
using the existence of (generalized) $C^{0}-$geodesics in the closure
$\overline{\mathcal{H}}_{\omega_{0}}.$ On the other hand, a main
point in the proof of the {}``uniqueness'' of critical points is
to show that there are no smooth extremal points of $\mathcal{\mathcal{F}}_{\omega_{0}}$
in the {}``boundary'' of $\mathcal{H}_{\omega_{0}},$ i.e. in $\overline{\mathcal{H}}_{\omega_{0}}-\mathcal{H}_{\omega_{0}}.$
Following \cite{b-b,bbgz} this is shown by extending $\mathcal{\mathcal{F}}_{\omega_{0}}$
to a (Gâteaux) differentiable function on all of $C^{0}(X),$ by replacing
$\mathcal{E}_{\omega_{0}}$ with the composed map $\mathcal{E}_{\omega_{0}}\circ P_{\omega_{0}},$
where $P_{\omega_{0}}$ is the following (non-linear) projection operator
from $C^{0}(X)$ onto $\mathcal{C}^{0}(X)\cap\overline{\mathcal{H}}_{\omega_{0}}:$
\begin{equation}
P_{\omega_{0}}[u](x)=\sup\left\{ v(x):\, v\in\mathcal{H}_{\omega_{0}},\,\, v\leq u\right\} \label{eq:proj oper intro}\end{equation}

\begin{rem}
\label{rem:finite energy}Consider the setting of Theorem \ref{thm:main}
and assume that there exists a (smooth) critical point, which we may
assume is given by $0.$ Then the inequality furnised by the theorem,
i.e. \[
\mathcal{F}_{\omega_{0}}(u):=\mathcal{E}_{\omega_{0}}(u)-\mathcal{L}_{\omega_{0}}(u)\leq0\]
actually holds for all $u$ in $\mathcal{E}^{1}(X,\omega_{0}),$ i.e.
for al $u$ in the convex set of all $u$ in $\overline{\mathcal{H}}_{\omega_{0}}$
with \emph{finite energy;} $\mathcal{E}(u)>-\infty,$ where \[
\mathcal{E}(u):=\inf_{u'\geq u}\mathcal{E}(u')\]
 when $u'$ ranges over all elements in $\mathcal{H}_{\omega_{0}}$
such that $u'\geq u.$ Equivalently, $\int_{X}(\omega_{u})^{n}=\mbox{Vol}(L)$
and $-\int_{X}u(\omega_{u})^{n}<\infty$ in terms of \emph{non-pluripolar
products} (see \cite{bbgz} and references therein). The inequality
on all of $\mathcal{E}^{1}(X,\omega_{0})$ is simply obtained by writing
$u$ as a decreasing limit of elements in $\mathcal{H}_{\omega_{0}}$
and using the continuity of $\mathcal{E}$ and $\mathcal{L}_{\omega_{0}}$
under such limits \cite{bbgz} (note that $e^{-u}$ is integrable
if $\mathcal{E}(u)>-\infty$ \cite{bbgz}).

Moreover, in the case when $\mbox{Aut}_{0}(X,L)$ is discrete it can
be shown that any maximizer of $\mathcal{F}_{\omega_{0}}$ on $\mathcal{E}^{1}(X,\omega_{0}),$
is in fact equal to a constant. The proof is a simple adaptation of
the argument in \cite{bbgz} concerning the case $L=-K_{X}.$ It would
be interesting to know if the general uniqueness statement in Theorem
\ref{thm:main} also remains true in the larger class $\mathcal{E}^{1}(X,\omega_{0})?$ 
\end{rem}
It is a pleasure to thank Bo Berntdsson for illuminating discussions
on the topic of the present paper, in particular in connection to
\cite{bern2}. It is an equal pleasure to thank Sébastien Boucksom,
Vincent Guedj and Ahmed Zeriahi for discussions and stimulation coming
from the colaboration \cite{bbgz}. The author is also grateful to
Yanir Rubinstein and Bálint Virág for helpful comments on a preliminary
version of this paper.

\subsection*{Organization}

In section 2 preliminaires for the proofs of the main results appearing
in section 3 are given. The proof of the uniqueness statement in the
main theorem relies on higher order regularity for {}``geodesics''
defined by inhomogenous Monge-Ampère equations. An alternative proof
based on considerably more elementary regularity results is given
in section \ref{sub:Alternative-proof-of}. In section \ref{sub:Arithemtic-applications}
applications to Arithmetic (Arakelov) geometry are briefly indicated.
In section 4 some of the previous results are interpreted in terms
of $SU(2)-$invariant determinantal random point process on $S^{2}.$
Finally, in section 5 the limit when the line bundle $L$ is replaced
by a large tensor power is studied and a new proof of the lower bound
on Mabuchi's $K-$energy for a polarized projective manifold is given.
Relations to Donaldson's work are also discussed. In the appendix
some formulas involving Bergman kernels are recalled and a {}``Bergman
kernel proof'' of Theorem \ref{thm:(Berndtsson)-Let-} is given.

\section{Preliminaries: Geodesics and energy functionals}

\subsection{Geodesics}

The infinite dimensional space $\mathcal{H}_{\omega}$ inherits an
\emph{affine} Riemannian structure from its natural imbedding as on
open set in $\mathcal{C}^{\infty}(X).$ Mabuchi, Semmes and Donaldson
(see \cite{ch} and references therein) introduced another Riemannian
structure on $\mathcal{H}_{\omega}$ (modolo the constants) defined
in the following way. Identifying the tangent space of $\mathcal{H}_{\omega}$
at the point $u$ with $\mathcal{C}^{\infty}(X)$ the squared norm
of a tangent vector $v$ at the point $u$ is defined as \[
\int_{X}v^{2}(\omega_{u})^{n}/n!.\]
However, the \emph{existence} of a geodesic $u_{t}$ in $\mathcal{H}_{\omega}$
connecting any given points $u_{0}$ and $u_{1}$ is an open and even
dubious problem. There are two problems: it is not known if $i)$
$u_{t}$ smooth, $ii)$ $\omega_{u_{t}}$ is strictly positive, as
a current. As is well-known such a geodesic may, if it exists, be
obtained as the solution of a homogenous Monge-Ampère equation (see
below). In the following we will simply take this characterization
as the \emph{definition} of a geodesic. It will also be important
to consider the larger space $\overline{\mathcal{H}}_{\omega_{0}}\cap C^{0}(X),$
since a priori the path $u_{t}$ may leave $\mathcal{H}_{\omega}.$
\begin{defn}
A continuous path in $\overline{\mathcal{H}}_{\omega_{0}}\cap C^{0}(X)$
$u_{t}$ will be called a \emph{$\mathcal{C}^{0}-$geodesic} connecting
$u_{0}$ and $u_{1}$ if $U(w,x):=u_{t}(x),$ where $t=\log\left|w\right|,$
is continuous on \[
M:=\{1\leq\left|w\right|\leq e\}\times X:=A\times X\]
with $dd^{c}U+\pi_{X}^{*}\omega_{0}\geq0$ and \begin{equation}
(dd^{c}U+\pi_{X}^{*}\omega_{0})^{n+1}=0\label{eq:dirichlet pr}\end{equation}
 in the interiour of $M$ in the sense of pluripotential theory \cite{g-z,de3},
where $\pi_{X}$ denotes the projection from $M$ to $X.$ 
\end{defn}
As shown in\cite{b-d,bbgz} $U(w,x)$ exists and is uniquely defined
as the extension from $\partial M$ obtained as the upper envelope
\begin{equation}
U(w,x)=\sup\left\{ V(w,x):\, V\in\mathcal{H}_{\pi_{X}^{*}\omega_{0}}(M),\,\, V\leq U\,\mbox{on}\,\partial M\right\} ,\label{eq:envelop}\end{equation}
where $\mathcal{H}_{\pi_{X}^{*}\omega_{0}}(M)$ denotes the set of
all smooth functions $V$ on $M$ such that $dd^{c}U+\pi_{X}^{*}\omega_{0}>0.$
If $u_{t}$ is such that $dd^{c}U+\pi_{X}^{*}\omega_{0}\geq0$ then
$u_{t}$ will be called a \emph{psh path} (or a \emph{subgeodesic}).
In local computations we will often make the identification $u_{t}(x)=U(w,x)$
extending $t$ to a complex variable. Then $u_{t}(x)$ is independent
of the imaginary part of $t$ and is hence \emph{convex} wrt real
$t.$ 

In the proof of the uniqueness part of Theorem \ref{thm:main} we
will have great use for the following regularity result for geodesics
in $\overline{\mathcal{H}}_{\omega_{0}},$ shown by Chen \cite{ch}.
See also \cite{bl} for a detailed analysis of the proof and some
refinements. The proof uses the method of continuity combined with
very precise a prioiri estimates on the perturbed Monge-Ampère equations.
\begin{thm}
\label{thm:(Chen)-Assume-that}(Chen) Assume that the boundary data
in the Dirichlet problem \ref{eq:dirichlet pr} for the Monge-Ampère
operator on $M$ is smooth on $\partial M.$ Then $U\in\mathcal{C}_{\C}^{1,1}(M).$
More precisely, the mixed second order complex derivatives of $U$
are uniformly bounded, i.e. there is a positive constant $C$ such
that \[
0\leq(dd^{c}U+\pi_{X}^{*}\omega_{0})\leq C(\pi_{X}^{*}\omega_{0}+\pi_{A}^{*}\omega_{A})\]
 where $\omega_{A}$ is the Eucledian metric on $A.$
\end{thm}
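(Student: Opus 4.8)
The plan is to realize $U$ as the limit of solutions to a family of non-degenerate complex Monge-Ampère equations and to show that these solutions satisfy second-order complex derivative bounds that are \emph{uniform} in the regularization parameter. Since $\pi_X^*\omega_0$ is only semi-positive on $M$ (it degenerates in the annulus directions), I would first fix the genuine Kähler form $\omega_\epsilon:=\pi_X^*\omega_0+\epsilon\,\pi_A^*\omega_A$ and, for each $\epsilon>0$, solve the Dirichlet problem
\[
(dd^cU_\epsilon+\omega_\epsilon)^{n+1}=\epsilon\,\pi_A^*\omega_A\wedge(\pi_X^*\omega_0)^n
\]
with the prescribed smooth data on $\partial M=\partial A\times X$; as $\epsilon\to 0$ the right-hand side tends to zero, recovering the homogeneous equation \ref{eq:dirichlet pr}. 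Existence and smoothness of $U_\epsilon$ for each fixed $\epsilon$ follow from the classical non-degenerate Dirichlet theory of Caffarelli-Kohn-Nirenberg-Spruck and Yau, once one verifies that $\partial M$ admits the required barriers/subsolutions. The entire difficulty is then to pass to the limit while retaining the two-sided bound $0\le dd^cU_\epsilon+\pi_X^*\omega_0\le C(\pi_X^*\omega_0+\pi_A^*\omega_A)$ and to identify the limit with the envelope \ref{eq:envelop}.

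I would then carry out the usual tower of a priori estimates, each required to be independent of $\epsilon$. The $C^0$ bound comes from the comparison principle applied to explicit sub- and super-solutions built from the boundary data; here the $S^1$-invariance of the data, so that $U_\epsilon=U_\epsilon(t,\cdot)$ with $t=\log|w|$ and $U_\epsilon$ convex in $t$, is a convenient simplification. The boundary gradient and the tangential second derivatives are controlled by differentiating the boundary condition along $\partial M$ and comparing with barriers. The two delicate ingredients are the double-normal boundary estimate on the complex Hessian, which one recovers from the equation itself together with the already-controlled tangential directions, and the interior Laplacian estimate, obtained by a Yau-type maximum-principle argument applied to a quantity of the form $e^{-\lambda U_\epsilon}\,\mathrm{tr}_{\omega_\epsilon}(\omega_\epsilon+dd^cU_\epsilon)$, reducing the interior trace bound to the boundary one. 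Finally, since $\omega_\epsilon+dd^cU_\epsilon$ is positive, an upper bound on its trace with respect to the fixed metric $\pi_X^*\omega_0+\pi_A^*\omega_A$ bounds each of its eigenvalues, which yields the stated estimate; letting $\epsilon\to 0$ produces $U\in\mathcal{C}_{\C}^{1,1}(M)$.

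The main obstacle is precisely the \emph{uniformity} of the second-order estimates as the right-hand side degenerates, i.e.\ as $\epsilon\to 0$. Unlike in Yau's theorem, the limiting operator is the homogeneous Monge-Ampère operator, so the linearization degenerates and the Evans-Krylov/Schauder machinery cannot be pushed past $C^{1,1}$; one cannot hope for better regularity, and even the $C^{1,1}$ bound hinges on the constants in the maximum-principle arguments above not blowing up with $\epsilon$. Keeping these constants under control is the heart of Chen's very precise analysis, and the product structure $M=A\times X$ together with the degeneracy of $\pi_X^*\omega_0$ along $A$ is exactly the feature that must be exploited to achieve it.
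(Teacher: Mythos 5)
This theorem is not proved in the paper at all: it is quoted as an external result of Chen \cite{ch} (with refinements in \cite{bl}), and the paper's only comment on its proof is that it ``uses the method of continuity combined with very precise a priori estimates on the perturbed Monge-Amp\`ere equations.'' Your outline reproduces exactly that strategy --- regularize to a non-degenerate family, run the tower of $C^0$, boundary-gradient, tangential, double-normal and interior Laplacian estimates with constants uniform in $\epsilon$, and pass to the limit --- so as a roadmap it is faithful to the cited argument. Be aware, however, that as written it is a plan rather than a proof: the uniform second-order estimates as $\epsilon\to 0$ are the entire mathematical content of Chen's theorem, and you only name them (as you yourself acknowledge in the last paragraph). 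Two points deserve explicit attention if you were to carry this out: the boundary $\partial M=\partial A\times X$ is Levi-flat in the $X$-directions, so the classical strictly pseudoconvex Dirichlet theory does not apply off the shelf and the barrier constructions must exploit the product structure; and the identification of the limit of the $U_\epsilon$ with the envelope \ref{eq:envelop} requires a comparison-principle argument that you mention only in passing.
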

In the statement above we have used the (non-standard) notation $\mathcal{C}_{\C}^{1,1}(M)$
for the set of all functions $U$ such that, locally, the current
$dd^{c}U$ has coefficents in $L^{\infty}.$ Such a $U$ is called
\emph{almost $\mathcal{C}^{1,1}$} in \cite{bl}. Note that if $U\in\mathcal{H}_{\pi_{X}^{*}\omega_{0}}(M)$
then this is equivalent to $U$ having a bounded Laplacian $\Delta_{M}U,$
where $\Delta_{M}$ is the Laplacian on $M$ wrt the Kähler metric
$\pi_{X}^{*}\omega_{0}+\pi_{A}^{*}\omega_{A}$ on $M.$ As will be
explained in section \ref{sub:Alternative-proof-of} the proof of
the uniqueness statement in Theorem \ref{thm:main} may actually be
obtained by only using the bounds on the derivatives of $u_{t}$ on
$X$ for $t$ fixed. As shown very recently in \cite{b-d} such bounds
may be obtained by working directly with the envelope \ref{eq:envelop}. 
\begin{thm}
\label{thm:berman-dem}Assume that the boundary data in the Dirichlet
problem \ref{eq:dirichlet pr} for the Monge-Ampère operator on $M$
is in $\mathcal{C}^{1,1}(\partial M).$ Then $u_{t}\in\mathcal{C}_{\C}^{1,1}(X).$
More precisely, the mixed second order complex derivatives of $u_{t}$
on $X$ are uniformly bounded, i.e. there is a positive constant $C$
such that \[
0\leq(dd^{c}u_{t}+\omega_{0})\leq C\omega_{0}\]
 on $X.$
\end{thm}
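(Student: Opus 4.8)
The plan is to obtain the two-sided bound by approximating the geodesic $U$ by solutions of \emph{non-degenerate} complex Monge-Amp\`ere equations and proving a bound on the \emph{fibrewise} complex Hessian that is uniform in the approximation parameter. The lower inequality $0\leq\omega_{0}+dd^{c}u_{t}$ requires nothing new: it is simply the restriction to a fibre $\{w\}\times X$ of the inequality $\pi_{X}^{*}\omega_{0}+dd^{c}U\geq0$, which is built into the definition of a $\mathcal{C}^{0}$-geodesic. Everything is therefore in the upper bound $\omega_{0}+dd^{c}_{x}u_{t}\leq C\omega_{0}$. To produce smooth objects to estimate, I would set $\omega_{\epsilon}:=\pi_{X}^{*}\omega_{0}+\epsilon\,\pi_{A}^{*}\omega_{A}$, which is a genuine K\"ahler form on $M$ for $\epsilon>0$, and solve the Dirichlet problems
\[
(\omega_{\epsilon}+dd^{c}U_{\epsilon})^{n+1}=\epsilon^{n+1}\,\omega_{\epsilon}^{n+1},\qquad U_{\epsilon}=g\,\mbox{ on }\,\partial M,
\]
where $g$ is the given $\mathcal{C}^{1,1}$ boundary data (smoothed in advance if needed). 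By the Caffarelli-Kohn-Nirenberg-Spruck and Guan theory these admit smooth solutions, and as $\epsilon\to0$ the comparison principle identifies their limit with the envelope \ref{eq:envelop}, i.e. with $U$, since $\omega_{\epsilon}$ degenerates to $\pi_{X}^{*}\omega_{0}$ while the right-hand side tends to $0$.

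The crux is a bound on the fibre quantity $S_{\epsilon}:=\mbox{tr}_{\omega_{0}}(\omega_{0}+dd^{c}_{x}U_{\epsilon})$ that is uniform in $\epsilon$. Here I would use the product structure decisively. Because the boundary $\partial M=\partial A\times X$ has the $X$-directions as \emph{tangential} directions, the fibre Hessian of $U_{\epsilon}$ along $\partial M$ is just the Hessian of the boundary data, so $S_{\epsilon}$ restricted to $\partial M$ equals $\mbox{tr}_{\omega_{0}}(\omega_{0}+dd^{c}_{x}g)$, which is controlled by $\|g\|_{\mathcal{C}^{1,1}}$ alone. Thus \emph{no} normal or mixed boundary second-order estimate — precisely what merely $\mathcal{C}^{1,1}$ data would fail to provide — is needed. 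It then suffices to bound $S_{\epsilon}$ at an interior maximum of the auxiliary function $H:=\log S_{\epsilon}-A\,U_{\epsilon}$. There I would run the Aubin-Yau second-order computation for the metric $\omega':=\omega_{\epsilon}+dd^{c}U_{\epsilon}$, applied to the \emph{partial} trace over the $X$-directions: since $\pi_{X}^{*}\omega_{0}$ is pulled back from $X$ its curvature lives only in the fibre directions and is bounded below, so choosing $A$ larger than a lower bound for (minus) the bisectional curvature of $\omega_{0}$ absorbs the bad third-order and curvature terms, while the constant right-hand side contributes with the favourable sign. This forces $S_{\epsilon}$ to be bounded at the maximum in terms of the curvature of $\omega_{0}$ and $\sup_{\partial M}S_{\epsilon}$; combined with uniform $C^{0}$ bounds on $U_{\epsilon}$ from the maximum principle, it yields $S_{\epsilon}\leq C$ on all of $M$ with $C$ independent of $\epsilon$.

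Letting $\epsilon\to0$ and using $U_{\epsilon}\to U$, the bound descends to $U$ and, restricted to each fibre, gives $\omega_{0}+dd^{c}u_{t}\leq C\omega_{0}$, which together with the lower bound is the claimed almost-$\mathcal{C}^{1,1}$ estimate, with $C$ depending only on $\|g\|_{\mathcal{C}^{1,1}}$ and on the geometry of $(X,\omega_{0})$.

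The step I expect to be the main obstacle is carrying out the second-order estimate for the \emph{partial} trace $\mbox{tr}_{\pi_{X}^{*}\omega_{0}}\omega'$ rather than the full trace on $M$. The full Laplacian estimate would demand control of the normal second derivatives at $\partial M$ (beyond $\mathcal{C}^{1,1}$ data), and along the geodesic the full Hessian genuinely degenerates; so one must set up the Bochner-type identity so that only the fibre Hessian is differentiated, and then verify that the mixed $A$--$X$ terms — harmless precisely because $\omega_{A}$ is flat and $\pi_{X}^{*}\omega_{0}$ is constant along $A$ — do not spoil the absorption. Keeping every constant uniform as $\epsilon\to0$ is the delicate accounting underlying this step, and is what makes the envelope approach deliver the estimate from only $\mathcal{C}^{1,1}$ boundary data, in contrast to Chen's method of continuity.
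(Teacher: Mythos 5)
First, a point of reference: the paper does not prove this theorem at all --- it is quoted from \cite{b-d}, where the estimate is obtained by working \emph{directly with the envelope} \ref{eq:envelop} (in the homogeneous case relevant to the corollaries, one simply composes a competitor $V$ with a group element $g$ near the identity and subtracts $C\,d(g,e)^{2},$ the $\mathcal{C}^{1,1}$ boundary data supplying exactly the quadratic correction needed to stay admissible on $\partial M$; the sup property then bounds the fibrewise second differences of $U$ from above). Your proposal instead follows the Chen-type route of approximating by non-degenerate Monge-Amp\`ere equations and running a maximum principle. Your boundary observation is correct and is genuinely the right reason such an estimate can hold with only $\mathcal{C}^{1,1}$ data: the fibre directions are tangential to $\partial M=\partial A\times X,$ so $S_{\epsilon}|_{\partial M}$ is controlled by the tangential Hessian of $g$ alone, and no normal or mixed boundary estimate is needed.

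The gap is in the interior estimate, and it is structural rather than a matter of bookkeeping. The Aubin--Yau scheme applied to $H=\log S_{\epsilon}-AU_{\epsilon}$ yields, at an interior maximum, a bound on $\mbox{tr}_{\omega'}(\pi_{X}^{*}\omega_{0})$ --- a trace of the \emph{inverse} metric --- after the curvature term is absorbed. Converting this into an upper bound on $S_{\epsilon}$ (a trace of the metric itself) is the step that, in the non-degenerate theory, uses the equation: one writes $\mbox{tr}_{\omega}\omega'\leq C\,f\cdot(\mbox{tr}_{\omega'}\omega)^{n}$ with $f$ the right-hand side density. Here this fails twice over. The right-hand side $\epsilon^{n+1}$ tends to zero, so the full trace of $\omega'$ is not recovered this way (indeed it cannot be: along the geodesic $\mbox{tr}_{\omega'}\omega_{\epsilon}$ necessarily blows up, since $\det\omega'\rightarrow0$ while the eigenvalues of the fibre block stay of order one). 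And $S_{\epsilon}$ involves only the $n\times n$ fibre block of $\omega',$ whose determinant is not controlled by the equation: the $(n+1)$-dimensional determinant factors as the fibre-block determinant times the Schur complement in the $w$-direction, and it is the Schur complement that degenerates. A bound on $\mbox{tr}_{\omega'}(\pi_{X}^{*}\omega_{0})$ bounds the Schur-complemented fibre block from \emph{below}, which says nothing about $S_{\epsilon}$ from above. So the ``favourable sign'' of the constant right-hand side does not close the argument, and the step you flag as the main obstacle is in fact where the proof breaks. To repair it within a PDE framework one would need a genuinely different auxiliary quantity or test function (this is essentially why Chen's proof, which does go this route, estimates the full Laplacian and therefore needs smooth boundary data for the boundary step); the envelope argument of \cite{b-d} sidesteps the issue entirely.
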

One of the virtues of this latter approach is that the proof is remarkly
simple when $X$ is homogenous.

\subsection{The functional $\mathcal{L}_{\omega_{0}}.$ }

First note that the functional $\mathcal{L}_{\omega_{0}}(u)$ defined
by formula \ref{eq:l as toeplitz det intro} is increasing on $\mathcal{C}^{0}(X),$
wrt the usual order relation. This is an immediate consequence of
the basic geometric interpretation in \cite{b-b} of $\mathcal{L}_{\omega_{0}}(u)$
as propopertional to the logarithmic volume of the unit-ball in the
Hilbert space $H^{0}(X,L+K_{X})$ equipped with the Hermitian product
induced by the weight $\psi_{0}+u.$ Alternatively, it follows from
formula \ref{eq:deriv of l} below which shows that the differential
of the functional $\mathcal{L}_{\omega_{0}}$ on $\mathcal{C}^{0}(X)$
may be represented by the positive measure $\beta_{u}.$ Integrating
$\beta_{u}$ along a line segment in $\mathcal{C}^{0}(X)$ equipped
with its affine structure then shows that $\mathcal{L}_{\omega_{0}}(u)$
is increasing. 

The \emph{differential} of the functional $\mathcal{L}_{\omega_{0}}$
on $\mathcal{C}^{0}(X)$ is given by 

\begin{equation}
(d\mathcal{L}_{\omega_{0}})_{u}=\beta_{u},\label{eq:deriv of l}\end{equation}
in the sense that given any smooth function $v$ we have that \[
d(\mathcal{L}_{\omega_{0}}(u+tv))/dt_{t=0}=\int_{X}\beta_{u}v,\]
 where $\beta_{u}$ is the \emph{Bergman measure associated to $u.$}
This latter measure is the positive measure on $X$ defined as \begin{equation}
\beta_{u}=(i^{n^{2}}\frac{1}{N}\sum_{i=1}^{N}s_{i}\wedge\bar{s_{i}}e^{-\psi_{0}})e^{-u}\label{eq:bergman meas}\end{equation}
in terms of any given orthonormal base $(s_{i})$ in the Hilbert space
$H^{0}(X,L+K_{X})$ equipped with the Hermitian product induced by
the weight $\psi_{0}+u$ (compare section \ref{sub:Bergman-kernels}).
In particular this means that $\beta_{u}$ may be represented as $e^{-u}$
times a strictly positive smooth measure on $X$ if $L+K_{X}$ is
globally generated. The proof of formula \ref{eq:deriv of l} follows
more or less directly from the definition (see \cite{bern2} for a
geometric argument).

The following theorem, which is direct consequence of a result of
Berndtsson about the curvature of direct image bundles \cite{bern2},
considers the \emph{second} derivatives of $\mathcal{L}_{\omega_{0}}$
along a psh path. As a courtesy to the reader a proof of the theorem,
using Bergman kernels, is given in the appendix.
\begin{thm}
\label{thm:(Berndtsson)-Let-}(Berndtsson) Let $u_{t}$ be a continuous
psh path in $\mathcal{H}_{\omega_{0}}.$ Then the function $t\mapsto\mathcal{L}_{\omega_{0}}(u_{t})$
is convex. Moreover, if $\mathcal{L}_{\omega_{0}}(u_{t})$ is affine
and $u_{t}$ is a \emph{smooth} psh path with $\omega_{u_{t}}>0$
on $X$ for all $t,$ then there is an automorphism $S_{1}$ of $(X,L)$,
homotopic to the identity, such that $u_{1}-u_{0}=S_{1}^{*}\psi_{0}-\psi_{0}.$ 
\end{thm}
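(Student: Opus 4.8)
The plan is to realize $t\mapsto\mathcal{L}_{\omega_{0}}(u_{t})$ as the weight of a natural Hermitian metric on a determinant line bundle over the annulus $A$, and to run Berndtsson's positivity-of-direct-images machinery. Form the total space $M=A\times X$ with projections $p=\pi_{A}:M\to A$ and $\pi_{X}:M\to X$, and set $\Phi:=\pi_{X}^{*}\psi_{0}+U$ on $\pi_{X}^{*}L$, where $U(w,x)=u_{\log|w|}(x)$. Since $u_{t}$ is a psh path, $dd^{c}\Phi=dd^{c}U+\pi_{X}^{*}\omega_{0}\geq0$, i.e. $\Phi$ is a semipositively curved weight on $\pi_{X}^{*}L$ over $M$. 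Consider the direct image bundle $E:=p_{*}(K_{M/A}+\pi_{X}^{*}L)$; because the relative canonical bundle is $K_{M/A}=\pi_{X}^{*}K_{X}$, the fiber of $E$ over $w$ is $H^{0}(X,L+K_{X})$ with the $L^{2}$-product induced by the weight $\psi_{0}+u_{t}$, so $E$ is holomorphically trivial carrying a varying Hermitian metric. In the fixed $\psi_{0}$-orthonormal frame $s_{1},\dots,s_{N}$, the induced metric on $\det E$ in the holomorphic frame $s_{1}\wedge\cdots\wedge s_{N}$ has weight exactly $-\log\det(\langle s_{i},s_{j}\rangle_{\psi_{0}+u_{t}})=N\mathcal{L}_{\omega_{0}}(u_{t})$. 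By Berndtsson's curvature theorem $E$ is Nakano semipositive, hence $\det E$ has semipositive curvature; as $U$ depends only on $t=\log|w|$, that curvature equals a positive multiple of $(\mathcal{L}_{\omega_{0}}(u_{t}))''(t)$ times $i\,dw\wedge d\bar{w}/|w|^{2}$, which is nonnegative precisely when $t\mapsto\mathcal{L}_{\omega_{0}}(u_{t})$ is convex. (The appendix obtains the same convexity concretely, by twice differentiating $\mathcal{L}_{\omega_{0}}$ using $(d\mathcal{L}_{\omega_{0}})_{u}=\beta_{u}$ together with a Bergman-kernel second-variation identity.)

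For the rigidity statement, suppose $\mathcal{L}_{\omega_{0}}(u_{t})$ is affine and the path is smooth with $\omega_{u_{t}}>0$ on each fiber. Then $\det E$ is flat, so $\mbox{tr}\,R^{E}_{w\bar{w}}=0$. Since $A$ is a curve, Nakano and Griffiths positivity coincide and $R^{E}_{w\bar{w}}$ is a positive semidefinite endomorphism of $E$; a trace-zero positive semidefinite operator vanishes, whence $R^{E}\equiv0$. I would then invoke Berndtsson's curvature formula, which in base dimension one expresses, for a holomorphic section $s$,
\[
\langle R^{E}_{w\bar{w}}s,\,s\rangle=\int_{X}c(\Phi)\,|s|^{2}e^{-\Phi}+(\mbox{nonnegative vertical }\bar{\partial}\mbox{-energy}),
\]
a sum of two nonnegative terms, the first being the fiber integral of the geodesic curvature $c(\Phi)\geq0$ of $\Phi$. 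Vanishing of $R^{E}$ forces both terms to vanish for every $s$.

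From vanishing of the horizontal term for all $s$, together with global generation of $L+K_{X}$ (so the $|s|^{2}$ have no common zero) and continuity of $c(\Phi)\geq0$, I get $c(\Phi)\equiv0$; combined with $\omega_{u_{t}}^{n}>0$ this is equivalent to $(dd^{c}\Phi)^{n+1}=0$, so $U$ in fact solves the homogeneous Monge-Amp\`ere equation, $u_{t}$ is a genuine geodesic, and $dd^{c}\Phi$ has a one-dimensional kernel transverse to the fibers. Vanishing of the vertical term says the Bergman projection is parallel along the path, which is exactly the assertion that the $dd^{c}\Phi$-orthogonal lift $V$ of $\partial/\partial w$ is \emph{holomorphic}. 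Integrating the real flow of $V$ yields biholomorphisms $F_{t}:X\to X$; these are automorphisms lifting to $(X,L)$ and are homotopic to the identity since $F_{0}=\mbox{id}$. Flow-invariance of $dd^{c}\Phi$ gives $F_{t}^{*}\omega_{u_{t}}=\omega_{u_{0}}$, and integrating this identity on the compact $X$ (absorbing the harmonic ambiguity into a constant fixed by the normalization of the path) produces the automorphism $S_{1}:=F_{1}$ with $u_{1}-u_{0}=S_{1}^{*}\psi_{0}-\psi_{0}$.

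The analytic heart, and the main obstacle, is the second and third steps: establishing the curvature formula with its clean splitting into a geodesic-curvature term and a nonnegative vertical term, and then passing from flatness of $\det E$ to the \emph{holomorphicity} of the kernel field $V$. The delicate points are the implication trace-zero $\Rightarrow$ curvature-zero (which genuinely uses Nakano positivity of $E$, not merely convexity of $\mathcal{L}_{\omega_{0}}$), the regularity and completeness needed to integrate $V$ to a global flow of automorphisms of the compact manifold $X$, and the bookkeeping that converts flow-invariance of $dd^{c}\Phi$ into the stated weight identity. By contrast the convexity is comparatively soft, following directly from positivity of direct images or, equivalently, from the Bergman-kernel computation carried out in the appendix.
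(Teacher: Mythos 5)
Your route is correct in outline, but it is not the paper's: it is essentially Berndtsson's original argument from \cite{bern2}, which the paper explicitly cites and then deliberately replaces. You package $N\mathcal{L}_{\omega_{0}}(u_{t})$ as the weight of the induced metric on $\det E$ for $E=p_{*}(K_{M/A}+\pi_{X}^{*}L)$ and invoke Nakano semipositivity of $E$, then for rigidity pass from flatness of $\det E$ through ``trace-zero positive semidefinite implies zero'' to the full curvature formula with its split into a geodesic-curvature term and a vertical $\bar{\partial}$-term. The paper's appendix instead never leaves the scalar level: it differentiates the Bergman-kernel identity to get
\[
\partial_{t}\partial_{\bar{t}}\mathcal{L}_{\omega_{0}}(u_{t})=\frac{1}{N}\sum_{i}\Bigl(\left\Vert (\partial_{t}\partial_{\bar{t}}u_{t})s_{i}\right\Vert _{\psi_{t}}^{2}-\left\Vert (\partial_{\bar{t}}u_{t}s_{i})-\Pi_{u_{t}}(\partial_{\bar{t}}u_{t}s_{i})\right\Vert _{\psi_{t}}^{2}\Bigr),
\]
recognizes the subtracted term as the minimal $L^{2}$-norm solution of $\bar{\partial}s=\bar{\partial}_{X}(\partial_{t}u_{t})s_{i}$, and applies the H\"ormander--Kodaira estimate. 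This buys two things: the derivation is elementary (Leibniz rule, the reproducing property, and Toeplitz traces, no direct-image curvature formula), and in the rigidity step the trace is already exhibited as a sum of $2N$ nonnegative quantities, so your detour through Griffiths positivity and the vanishing of the full endomorphism $R^{E}_{w\bar{w}}$ is unnecessary --- vanishing of the trace directly forces $c(\Phi)\equiv 0$ on the support of each $|s_{i}|^{2}$ (hence everywhere, by global generation) and forces equality in H\"ormander's inequality, which via the Bochner--Kodaira identity yields holomorphicity of $V_{t}$. From there both arguments coincide: integrate $V_{t}$ to the family $S_{t}$ and deduce $\psi_{t}-S_{t}^{*}\psi_{0}=C_{t}$.

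One genuine gap: the convexity claim is stated for a merely \emph{continuous} psh path, so $U(w,x)$ is in general only $\pi_{X}^{*}\omega_{0}$-psh and not smooth on $M$, whereas Berndtsson's direct-image theorem and the curvature formula you invoke require a smooth (or suitably regular) weight on the total space. You apply the machinery directly without addressing this. The paper closes the gap by Richberg approximation: take $U^{j}\rightarrow U$ uniformly with $dd^{c}U^{j}+\pi^{*}\omega_{0}>0$, apply the smooth-case result to each $U^{j}$, and observe that a uniform limit of convex functions is convex. You should insert this reduction; without it the first half of the theorem is only proved under a smoothness hypothesis that the statement does not impose. (For the rigidity half smoothness is assumed, so no issue arises there.)
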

The convexity statement in \cite{bern2} assumed in fact that $u_{t}$
be\emph{ smooth.} However, by uniform approximation the convexity
statement above in fact holds for any \emph{continuous} psh path in
$C^{0}(X).$ Indeed, if $u_{t}$ is such a path, then there exists,
for example by Richbergs's approximation theorem \cite{d1}, a sequence
$U^{j}$ converging uniformly towards $U$ on $M$ such that $dd^{c}U^{j}+\pi^{*}\omega_{0}>0.$
Applying the theorem above to each $U^{j}$ and letting $j$ tend
to infinity then gives that $f(t):=\mathcal{L}_{\omega_{0}}(u_{t})$
is a uniform limit of convex functions and hence convex, proving the
claim. 

However, for the uniqueness statement the argument in \cite{bern2}
seems to require that $\omega_{u_{t}}$ be reasonably smooth in $(t,x).$
Moreover, the assumption that $\omega_{t}>0$ is crucial to be able
to define the vector fields $V_{t}$ that integrate to the automorphism
$S_{1}$ (see formula \ref{eq:int multip}).

\subsection{The functional $\mathcal{E}_{\omega_{0}}$}

First recall the following well-known formula for the differential
of the energy functional $\mathcal{E}_{\omega_{0}}$ defined by formula
\ref{eq:def of e intro}:

\begin{equation}
(d\mathcal{E}_{\omega_{0}})_{u}=\omega_{u}^{n}/n!\label{eq:deriv of e}\end{equation}
The following generalization from \cite{b-b} of the previous formula
to the functional $\mathcal{E}_{\omega_{0}}\circ P_{\omega_{0}},$
where $P_{\omega_{0}}$ is the non-linear projection \ref{eq:proj oper intro},
will be crucial for the proof of Theorem \ref{thm:main}:
\begin{thm}
\label{thm:deriv of composed }The functional $\mathcal{E}_{\omega_{0}}\circ P_{\omega_{0}}$
is Gâteaux differentiable on \textup{$\mathcal{C}^{0}(X).$} Its differential
at the \textup{point $u$ is represented by the measure $\omega_{P_{\omega_{0}}u}^{n}/n!,$
i.e.} given $u,v\in\mathcal{C}^{0}(X)$ the function $\mathcal{E}_{\omega_{0}}P_{\omega_{0}}(u+tv)$
is differentiable on $\R_{t}$ and \begin{equation}
d\mathcal{E}_{\omega_{0}}P_{\omega_{0}}(u+tv)/dt_{t=0}=\int_{X}v\omega_{P_{\omega_{0}}u}^{n}/n!\label{eq:deriv of comp}\end{equation}

\end{thm}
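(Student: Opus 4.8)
The plan is to fix $u,v\in\mathcal C^0(X)$, set $f(t):=\mathcal E_{\omega_0}\circ P_{\omega_0}(u+tv)$ with $P:=P_{\omega_0}$ the envelope \ref{eq:proj oper intro}, and show that $f$ is differentiable with $f'(t)=\int_X v\,\omega^n_{P(u+tv)}/n!$; evaluating at $t=0$ is then exactly \ref{eq:deriv of comp}. First I would record the elementary properties of $P$: it satisfies $P(w)\le w$ with $P(w)\in\overline{\mathcal H}_{\omega_0}\cap\mathcal C^0(X)$, it is order preserving, and it commutes with the addition of constants, so that $\|P(w_1)-P(w_2)\|_\infty\le\|w_1-w_2\|_\infty$. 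Applied to $w_1=u+tv$, $w_2=u$ this gives $P(u+tv)\to P(u)$ uniformly as $t\to0$, and all of these are uniformly bounded $\omega_0$-plurisubharmonic functions. Hence, by the Bedford--Taylor continuity of the Monge--Amp\`ere operator along uniformly convergent sequences, $\omega^n_{P(u+tv)}\to\omega^n_{P(u)}$ weakly, and since $v$ is continuous the quantities $J(t):=\int_X v\,\omega^n_{P(u+tv)}/n!$ converge to $I:=\int_X v\,\omega^n_{P(u)}/n!$ as $t\to0$.

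The two analytic inputs I would use are, first, the concavity of $\mathcal E_{\omega_0}$ on the convex set $\overline{\mathcal H}_{\omega_0}$, which I would phrase as the two-sided energy comparison obtained from \ref{eq:deriv of e} by differentiating $\mathcal E_{\omega_0}$ twice along the segment joining $w_0$ and $w_1$ (the second derivative being $-n\int_X d(w_1-w_0)\wedge d^c(w_1-w_0)\wedge\omega^{n-1}_{\,\cdot}/n!\le0$):
\[
\int_X(w_1-w_0)\frac{\omega^n_{w_1}}{n!}\ \le\ \mathcal E_{\omega_0}(w_1)-\mathcal E_{\omega_0}(w_0)\ \le\ \int_X(w_1-w_0)\frac{\omega^n_{w_0}}{n!},\qquad w_0,w_1\in\overline{\mathcal H}_{\omega_0};
\]
and second, the \emph{orthogonality} property of the envelope from \cite{b-b}, namely that the measure $\omega^n_{P(w)}$ is supported on the contact set $\{P(w)=w\}$. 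Taking $w_1=P(u+tv)$ and $w_0=P(u)$, the right-hand inequality combined with the contact property for $\omega^n_{P(u)}$ gives the upper bound: on $\mathrm{supp}\,\omega^n_{P(u)}\subseteq\{P(u)=u\}$ one has $P(u+tv)-P(u)=P(u+tv)-u\le tv$, whence $f(t)-f(0)\le t\,I$. The left-hand inequality combined with the contact property for $\omega^n_{P(u+tv)}$ gives the lower bound: on $\mathrm{supp}\,\omega^n_{P(u+tv)}\subseteq\{P(u+tv)=u+tv\}$ one has $P(u+tv)-P(u)=u+tv-P(u)\ge tv$, whence $f(t)-f(0)\ge t\,J(t)$.

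Combining, for every $t\neq0$ I obtain the sandwich
\[
t\,J(t)\ \le\ f(t)-f(0)\ \le\ t\,I .
\]
Dividing by $t$ (and reversing the inequalities when $t<0$) places the difference quotient $(f(t)-f(0))/t$ between $J(t)$ and $I$ for every $t$; since $J(t)\to I$, a squeeze shows $\lim_{t\to0}(f(t)-f(0))/t=I$, which is precisely \ref{eq:deriv of comp}. The identical argument with base point $u$ replaced by $u+t_0v$ shows $f$ is differentiable at every $t_0$, so $\mathcal E_{\omega_0}\circ P_{\omega_0}$ is G\^ateaux differentiable on $\mathcal C^0(X)$ with differential $\omega^n_{P_{\omega_0}u}/n!$.

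I expect the main obstacle to be the orthogonality/contact property $\mathrm{supp}\,\omega^n_{P(w)}\subseteq\{P(w)=w\}$, which is the single genuinely nontrivial ingredient; everything else is either the definition of $P$ or standard Bedford--Taylor pluripotential theory. It is an instance of obstacle-problem regularity: on the open set $\{P(w)<w\}$ the envelope is not touched from above by the obstacle and is therefore maximal, so a local balayage/comparison argument forces $\omega^n_{P(w)}=0$ there; making this rigorous uses the continuity of $P(w)$ and the locality of the Monge--Amp\`ere operator. Since this is exactly the content established in \cite{b-b}, I would invoke it rather than reprove it and devote the write-up to the two comparison inequalities and the passage to the limit.
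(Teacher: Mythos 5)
Your argument is correct and is essentially the proof the paper relies on: the paper gives no proof of Theorem \ref{thm:deriv of composed } itself but cites \cite{b-b}, where the result is established by exactly your combination of the two-sided concavity estimate for $\mathcal{E}_{\omega_{0}}$ with the orthogonality relation $\mbox{supp}\,\omega_{P_{\omega_{0}}w}^{n}\subseteq\{P_{\omega_{0}}w=w\}$ (the same Prop.~1.10 of \cite{b-b} that the paper invokes in the proof of Corollary \ref{cor:moser}), followed by the squeeze $tJ(t)\le f(t)-f(0)\le tI$ and Bedford--Taylor continuity of the Monge--Amp\`ere operator under the uniform convergence $P_{\omega_{0}}(u+tv)\rightarrow P_{\omega_{0}}u$ supplied by the $1$-Lipschitz property of the envelope. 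I see no gaps.
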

As for the second derivatives of $\mathcal{E}_{\omega_{0}}$ we have
the following Proposition which is well-known (at least in the smooth
case):
\begin{prop}
\label{pro:e is affine}The following properties of $\mathcal{E}_{\omega_{0}}$
hold:\end{prop}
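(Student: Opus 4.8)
The plan is to prove the two expected assertions — that $t\mapsto\mathcal{E}_{\omega_{0}}(u_{t})$ is affine along a $\mathcal{C}^{0}$-geodesic and convex along an arbitrary psh path (subgeodesic) — by first deriving an explicit formula for the second derivative and then identifying it with the fibre-wise Monge-Amp\`ere mass of $U$ on $M$. I would begin in the smooth case, where $u_{t}$ is a smooth psh path with $\omega_{u_{t}}>0$. Differentiating the first-order formula \ref{eq:deriv of e} once more and integrating by parts on the compact manifold $X$ gives
\[
\frac{d^{2}}{dt^{2}}\mathcal{E}_{\omega_{0}}(u_{t})=\int_{X}\ddot u_{t}\,\frac{\omega_{u_{t}}^{n}}{n!}-\int_{X}d\dot u_{t}\wedge d^{c}\dot u_{t}\wedge\frac{\omega_{u_{t}}^{n-1}}{(n-1)!},
\]
the single boundary-free integration by parts being responsible for the minus sign in front of the (positive) gradient term.

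The key algebraic step is to recognize this combination as a positive multiple of the push-forward to $A$ of the Monge-Amp\`ere mass of $U$ on the total space $M=A\times X$. Expanding $(dd^{c}U+\pi_{X}^{*}\omega_{0})^{n+1}$ along the fibration $\pi_{A}:M\to A$ — using that $U$ depends on $w$ only through $t=\log\left|w\right|$, so that $\partial_{w}\partial_{\bar w}U$ is proportional to $\ddot u_{t}/\left|w\right|^{2}$ while the mixed differentials produce the gradient term — a direct computation yields
\[
\pi_{A*}\big[(dd^{c}U+\pi_{X}^{*}\omega_{0})^{n+1}\big]=c_{n}\,\frac{d^{2}}{dt^{2}}\mathcal{E}_{\omega_{0}}(u_{t})\,\frac{i\,dw\wedge d\bar w}{\left|w\right|^{2}},\qquad c_{n}>0.
\]
Since $U$ is $\pi_{X}^{*}\omega_{0}$-plurisubharmonic the left-hand side is a positive measure, forcing $\frac{d^{2}}{dt^{2}}\mathcal{E}_{\omega_{0}}(u_{t})\geq0$, i.e. convexity along any subgeodesic; when $u_{t}$ solves the homogeneous Monge-Amp\`ere equation \ref{eq:dirichlet pr} the measure vanishes identically and $\mathcal{E}_{\omega_{0}}(u_{t})$ is affine, which is the statement used in the proof of Theorem \ref{thm:main}.

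Finally I would remove the smoothness hypothesis. For a genuine $\mathcal{C}^{0}$-geodesic the function $U$ is, by Chen's Theorem \ref{thm:(Chen)-Assume-that} (or Theorem \ref{thm:berman-dem}), only of class $\mathcal{C}_{\C}^{1,1}(M)$, so both the displayed integration by parts and the pointwise fibre expansion must be read in the Bedford-Taylor sense, with $\frac{d^{2}}{dt^{2}}\mathcal{E}_{\omega_{0}}$ interpreted as a distributional second derivative. The cleanest route is approximation: exactly as in the discussion following Theorem \ref{thm:(Berndtsson)-Let-}, one chooses by Richberg's theorem smooth strictly $\pi_{X}^{*}\omega_{0}$-psh functions $U^{j}\to U$ uniformly on $M$, applies the convexity just established to each associated path, and passes to the limit using the continuity of $\mathcal{E}_{\omega_{0}}$ along uniformly (or monotonically) convergent sequences from \cite{b-b,bbgz}. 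I expect this last step — justifying the second-derivative computation and the fibre integration once the path is merely $\mathcal{C}_{\C}^{1,1}$ rather than smooth — to be the main obstacle, since the bounded-Laplacian regularity of Theorems \ref{thm:(Chen)-Assume-that} and \ref{thm:berman-dem} is precisely what guarantees that the Monge-Amp\`ere mass is well defined and that the gradient term stays integrable in the limit.
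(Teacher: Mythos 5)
Your proposal follows essentially the same route as the paper: the key identity expressing $d^{2}\mathcal{E}_{\omega_{0}}(u_{t})/dt^{2}$ as the push-forward of $(dd^{c}U+\pi_{X}^{*}\omega_{0})^{n+1}$ (formula \ref{eq:genaral second deriv}), positivity resp.\ vanishing of that measure, and Richberg approximation combined with Bedford--Taylor weak continuity to handle the non-smooth case. Two minor remarks: the first property actually asserted in the proposition is concavity of $\mathcal{E}_{\omega_{0}}$ with respect to the \emph{affine} structure on $\mathcal{C}^{0}(X)$, not convexity along psh paths, but this drops out of your displayed second-derivative formula by taking $u_{t}=u_{0}+tv$, so that $\ddot{u}_{t}=0$ and only the non-positive gradient term survives. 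Also, the regularity concern in your last paragraph is unfounded: the paper's proof of this proposition never invokes Theorem \ref{thm:(Chen)-Assume-that} or \ref{thm:berman-dem}, since Bedford--Taylor theory already defines $(dd^{c}U+\pi_{X}^{*}\omega_{0})^{n+1}$ for continuous $\pi_{X}^{*}\omega_{0}$-psh $U$ and gives its weak continuity under uniform limits, which together with the fact that a uniform limit of convex functions with second derivatives tending weakly to zero is affine is all that is needed.
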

\begin{itemize}
\item The functional $\mathcal{E}_{\omega_{0}}$ on $\overline{\mathcal{H}}_{\omega_{0}}\cap\mathcal{C}^{0}(X)$
is concave wrt the affine structure on $\mathcal{C}^{0}(X).$
\item Let $u_{t}$ be a $\mathcal{C}^{0}$- geodesic in $\overline{\mathcal{H}}_{\omega_{0}}$
connecting $u_{0}$ and $u_{1}.$ Then the functional $t\mapsto\mathcal{E}_{\omega_{0}}(u_{t})$
is affine and continuous on $[0,1].$\end{itemize}
\begin{proof}
(A proof also appears in \cite{bbgz}). Recall the following well-known
formula (see for example \cite{b-b}): \begin{equation}
d_{t}d_{t}^{c}\mathcal{E}_{\omega_{0}}(u_{t})=t_{*}(dd^{c}U+\pi^{*}\omega_{0})^{n+1}/(n+1)!,\label{eq:genaral second deriv}\end{equation}
 where $t_{*}$ denotes the natural push-forward map from $M$ to
$\C_{t}.$ In particular, setting $u_{t}=u_{0}+tu$ gives for real
$t$ $d^{2}\mathcal{E}_{\omega_{0}}(u_{t})/d^{2}t=-\int_{X}\left|\partial u\right|^{2}\omega_{0}^{n}\leq0$
(compare formula \ref{eq:expanding monge as c}) which proves the
first point of the proposition when $u$ is smooth. To handle the
general case one takes $u_{j}$ in $\mathcal{H}_{\omega_{0}}$ converging
uniformly to $u$ and uses that, according to Bedford-Taylor's classical
results, $\mathcal{E}_{\omega_{0}}$ is continuous under uniform limits
in $\overline{\mathcal{H}}_{\omega_{0}}\cap\mathcal{C}^{0}(X)$ (see
also \cite{b-b}). This shows that $\mathcal{E}_{\omega_{0}}(u_{t})$
is the limit of concave functions and hence concave. To prove the
last point take a sequence $U^{j}$ converging uniformly to $U$ on
$M$ and such that $dd^{c}U^{j}+\pi^{*}\omega_{0}>0$ (compare the
discussion below Theorem \ref{thm:(Berndtsson)-Let-}). By Bedford-Taylor
$(dd^{c}U^{j}+\pi^{*}\omega_{0})^{n+1}$ tends weakly to $(dd^{c}U^{j}+\pi^{*}\omega_{0})^{n+1}$
in the interiour of $M.$ Hence, formula \ref{eq:genaral second deriv}
shows that the second real derivatives of $\mathcal{E}_{j}(t):=\mathcal{E}(u_{t}^{j})$
tend weakly to zero in the sense of distributions for $t\in]0,1[.$
But since the sequence $\mathcal{E}_{j}(t)$ of smooth convex functions
tends to $\mathcal{E}(t)$ it follows that $\mathcal{E}(t)$ is affine
on $]0,1[$ and hence by continuity on all of $[0,1].$ To be more
precise: since $U$ is continuous on the compact set $M$ the family
$u_{t}$ tends to $u_{0}$ and $u_{1}$ uniformly when $t\rightarrow0$
and $t\rightarrow1,$ respectively. Finally, since $\mathcal{E}$
is continuous under uniform limits in $\overline{\mathcal{H}}_{\omega_{0}}\cap\mathcal{C}^{0}(X)$
this proves that $\mathcal{E}$ is continuous up to the boundary on
$[0,1].$
\end{proof}
Before turning to the proof of Theorem \ref{thm:main}, we recall
the following basic cocycle property of the functional $\mathcal{F}_{\omega_{0}}:=\mathcal{E}_{\omega_{0}}-\mathcal{L}_{\omega_{0}}:$
\begin{equation}
\mathcal{F}_{\omega_{u_{2}}}(u_{1})+\mathcal{F}_{\omega_{u_{3}}}(u_{2})=\mathcal{F}_{\omega_{u_{3}}}(u_{1}),\label{eq:cocy}\end{equation}
 which is a direct consequences of the corresponding cocyle properties
of $\mathcal{E}_{\omega_{0}}$ and $\mathcal{L}_{\omega_{0}}.$ These
latter properties in turn are immediately obtained by integrating
the corresponding differentials along line segments (compare \cite{ti}).
\begin{rem}
\label{rem:j-f}The funtional $\mathcal{E}_{\omega_{0}}$ may be expressed
in terms of a generalized Dirichlet type energy $J_{\omega_{0}}:$
\[
-\mathcal{E}_{\omega_{0}}(u)=J_{\omega_{0}}(u)-\frac{1}{V}\int u\omega_{0},\]
 where $J_{\omega_{0}}$ is Aubin's energy functional \begin{equation}
J_{\omega_{0}}(u):=\frac{1}{V}\sum_{i=1}^{n-1}\frac{i+1}{n+1}\int du\wedge du^{c}\wedge(\omega_{0})^{i}\wedge(\omega_{u})^{n-1-i}\label{eq:def of j}\end{equation}
(compare \cite{ti} p. 58). Note that if $n=1$ then $J_{\omega_{0}}$
is non-negative for \emph{any} $u,$ while the natural condition to
obtain non-negativity when $n>1$ is that $\omega_{u}\geq0.$ On the
other hand as shown in \cite{rub0} (lemma 2.1), there are examples
of general smooth $u$ with $J_{\omega_{0}}<0$ for any manifold $X$
of dimension $n>1.$ As a direct consequence it was shown in \cite{rub0},
in the case $L=-K_{X},$ that any such fucntion $u$ violates the
inequality in Theorem \ref{thm:main}. A similar argument applies
to a homogeneous line bundle $L$ as in Corollary \ref{cor:homeg}.
Indeed, without affecting the value of $J_{\omega_{0}}(u)$ we may
assume that $\int_{X}u\omega_{0}^{n}=0$ so that $-\mathcal{E}_{\omega_{0}}(u)=J_{\omega_{0}}(u)<0.$
Now, using the notation of section \ref{sec:Application-to-determinantal}
below, \[
-\mathcal{L}_{\omega_{0}}(u)=\log\E_{N}(e^{-(u(x_{1})+...+u(x_{n})})\geq-\E_{N}(u(x_{1})+...+u(x_{n})),\]
using Jensen's inequality in the last step. Moreover, by formula \ref{eq:beta as one-pt}
$\E_{N}(u(x_{1})+...+u(x_{n}))=\int_{X}u\beta_{u}).$ Since $\beta_{u}=\omega_{0}^{n}/V$
in the homogenous case (compare the proof of Corollary \ref{cor:homeg}),
this means that $-\mathcal{L}_{\omega_{0}}(u)\geq0.$ Hence, $u$
violates the inequality referred to above.
\end{rem}

\section{Proofs of the main results}

\subsection{\label{sub:Proof-of-Theorem-main}Proof of Theorem \ref{thm:main}}

By the cocycle property of $\mathcal{F}_{\omega_{0}}$ (see \cite{bbgz,b-b};
it is shown by integrating the differential of $\mathcal{F}_{\omega_{0}}$
along line segments) we may without loss of generality assume that
$u=0$ is critical. Take a \emph{continuous} element $u_{1}$ in $\mathcal{H}_{\omega_{0}}$
and the corresponding $\mathcal{C}^{0}-$geodesic $u_{t}$ connecting
$u_{0}=0$ and $u_{1}.$ Since $u_{t}$ is a continuous path, combining
Theorem \ref{thm:(Berndtsson)-Let-} and Proposition \ref{pro:e is affine}
gives that $\mathcal{F}_{\omega_{0}}(t):=\mathcal{F}_{\omega_{0}}(u_{t})$
is a continous concave function on $[0,1].$ Hence, the inequality
in Theorem \ref{thm:main} will follow once we have shown that \begin{equation}
\frac{d}{dt}_{t=0+}\mathcal{F}(u_{t})\leq0.\label{eq:ineq for right der}\end{equation}
Of course, if $u_{t}$ were known to be a \emph{smooth} path then
this would be an immediate consequence of the assumption that $u_{0}$
is critical combined with the chain rule (which would even yield equality
above). To prove \ref{eq:ineq for right der} first observe that by
the concavity in Prop \ref{pro:e is affine}\[
(\mathcal{E}_{\omega_{0}}(u_{t})-\mathcal{E}_{\omega_{0}}(u_{0}))/t\leq\frac{1}{t}\int_{X}(u_{t}-u_{0})(\omega_{u_{0}})^{n}/n!\]
Hence, the monotone convergence theorem applied to the sequence $(u_{t}-u_{0})/t$
which decreases to the right derivative $v_{0}$ of $u_{t}$ at $t=0$
(using that $u_{t}$ is convex in $t)$ gives \begin{equation}
\frac{d}{dt}_{t=0+}\mathcal{E}_{\omega_{0}}(u_{t})\leq\int_{X}v_{0}(\omega_{u_{0}})^{n}/n!\label{eq:upper bound on deriv of energy}\end{equation}
Hence, \[
\frac{d}{dt}_{t=0+}\mathcal{F}(u_{t})\leq\int_{X}((\omega_{u_{0}})^{n}/n!-\beta_{u_{u}})v_{0}=0,\]
 where we have also used the dominated convergence theorem to differentiate
$\mathcal{L}_{\omega_{0}}(u_{t})$ (compare \cite{bbgz,b-b}). This
finishes the proof of \ref{eq:ineq for right der}and hence the first
statement in the theorem follows.

\emph{Uniqueness:} Assume now that $u_{1}$ is a smooth maximizer
of $\mathcal{F}_{\omega_{0}}$ on $\overline{\mathcal{H}}_{\omega_{0}}$
i.e. that $\mathcal{F}_{\omega_{0}}(u_{1})=\mathcal{F}_{\omega_{0}}(u_{0})$
by the previous step. Since $\mathcal{F}_{\omega_{0}}(t):=\mathcal{F}_{\omega_{0}}(u_{t})$
is continuous and concave it follows that $u_{t}$ maximizes $\mathcal{F}_{\omega_{0}}$
on $\overline{\mathcal{H}}_{\omega_{0}}\cap\mathcal{C_{\C}}^{1,1}(X)$
for all $t.$ Next, we will show that $u_{t}$ satisfies the Euler-Lagrange
equation \ref{eq:e-l for f} for any fixed $t$ (see \cite{bbgz}
for similar arguments). To this end fix $t=t_{0}$ and set $u_{t_{0}}:=u.$
Given a smooth function $v$ on $X$ consider the function $f(t):=\mathcal{E}_{\omega_{0}}(P_{\omega_{0}}(u+tv))-\mathcal{L}_{\omega_{0}}(u+tv)$
on $\R_{t}.$ Since, the functional $\mathcal{L}_{\omega_{0}}$ is
increasing on $\mathcal{C}^{0}(X)$ we have $f(t)\leq\mathcal{F}_{\omega_{0}}(P_{\omega_{0}}(u+tv)).$
By assumption this means that the maximal value of the function $f(t)$
is attained for $t=0$ (also using that $P_{\omega_{0}}u=u).$ In
particular, since by Theorem \ref{thm:deriv of composed } $f(t)$
is differentiable $df/dt=0$ at $t=0$ and Theorem \ref{thm:deriv of composed }
and formula \ref{eq:deriv of l} hence show that the Euler-Lagrange
equation \ref{eq:e-l for f} holds (since it holds when tested on
any smooth function $v).$ 

Next, we will prove that $U\in\mathcal{C}^{\infty}(\dot{M}),$ where
$\dot{M}$ denotes the interiour of $M.$ By Theorem \ref{thm:(Chen)-Assume-that}
$U$ is in $\mathcal{C}_{\C}^{1,1}(M).$ Moreover, by the homogenous
Monge-Ampère equation \ref{eq:dirichlet pr} and the Euler-Lagrange
equation \ref{eq:e-l for f} we have \[
(dd^{c}(U+\left|w\right|^{2})+\pi_{X}^{*}\omega_{0})^{n+1}=i\beta_{u}\wedge dw\wedge d\bar{w}\]
 Hence, the following equation holds locally on $\C^{n+1}$ (where
we for simplicity have kept the notation $U$ for the function obtained
after subtracting a smooth and hence harmless function from $U):$
\begin{equation}
\det(\partial_{\zeta_{i}}\partial_{\bar{\zeta}_{j}}U)=e^{-U}\rho,\label{eq:local exp ma}\end{equation}
 where $\rho$ is a positive smooth function, depending on $U$ (compare
the discussion below formula \ref{eq:bergman meas}). In particular,
$\det(\partial_{\zeta_{i}}\partial_{\bar{\zeta}_{j}}U)$ is locally
in $\mathcal{C}_{\C}^{1,1}.$ But then Theorem 2.5 in \cite{bl0},
which is a complex analog of a result of Trudinger for fully non-linear
elliptic operators (compare Evans-Krylov theory), gives that $U$
is locally in the Hölder space $\mathcal{C}^{2,\alpha}$ for some
$\alpha>0.$ Now the equation \ref{eq:local exp ma} shows that $\det(\partial_{\zeta_{i}}\partial_{\bar{\zeta}_{j}}U)$
is also in $\mathcal{C}^{2,\alpha}.$ Finally, since we have hence
shown that $U\in C^{2},$ standard theory of uniformly elliptic operators
then allows us to boot strap using \ref{eq:local exp ma} and deduce
that $U\in\mathcal{C}^{\infty}$ locally (see Theorem 2.2 in \cite{bl}).
Note also that by the Euler-Lagrange equation \ref{eq:e-l for f}
we have a uniform lower bound $\omega_{u_{t}}^{n}>\delta\omega_{0}^{n}$
(also using the lower bound in formula \ref{eq:bounds on beta} in
the appendix). Combining the previous lower bound with the upper bound
$\omega_{u_{t}}\leq C\omega_{0}$ from Theorem \ref{thm:(Chen)-Assume-that}
then shows that there is a positive constant $C',$independent of
$t,$ such that \begin{equation}
1/C'\omega_{0}\leq\omega_{u_{t}}\leq C'\omega_{0}\label{eq:bound on omega t}\end{equation}
Since, by the above arguments $\mathcal{F}_{\omega_{0}}(u_{t})$ and
$\mathcal{E}_{\omega_{0}}(u_{t})$ are both affine (and even constant)
it follows that $\mathcal{L}_{\omega_{0}}(u_{t})$ is affine. In case
$U$ were smooth \emph{up to the boundary} of $M$ applying Theorem
\ref{thm:(Berndtsson)-Let-} would hence prove the uniqueness statement
in Theorem \ref{thm:main}. To prove the general case we may without
loss of generality assume that $u_{t}(x)$ is smooth on $[0,1[\times X$
(otherwise we just apply the same argument on $[1/2,1[$ and $]0,1/2]).$
For any $\epsilon>0$ Theorem \ref{thm:(Berndtsson)-Let-} (see Theorem
2.6 in \cite{bern2}) furnishes a 1-parameter holomorphic family $S_{t}$
in $\mbox{Aut}_{0}(X,L)$ with $t\in[0,1-\epsilon]$ defined by the
ordinary differential equation \begin{equation}
\frac{dS_{t}(x(t))}{dt}=d_{X}(S(x(t))[V_{t}]_{x(t)}\label{eq:ode}\end{equation}
with the iniatial data $S_{0}=I$ (the identity), where $V_{t}$ is
the vector field on $X$ of type $(1,0)$ defined by the equation
\begin{equation}
\omega_{u_{t}}(V_{t},\cdot)=\bar{\partial}_{X}(\partial_{t}u),\label{eq:int multip}\end{equation}
 where $\bar{\partial}_{X}$ is the $\bar{\partial}-$operator on
$X$ and $\partial_{t}$ is the partial holomorphic derivative wrt
$t$ for $z$ fixed in $X.$ As shown in \cite{bern2} the fact that
$\mathcal{L}(u_{t})$ is affine wrt $t$ forces the vector field $V_{t}$
to be holomorphic on $X$ for each $t$ and it then follows that $V_{t}$
is holomorphic wrt $t$ as well (a slight variant of this argument
is recalled in section \ref{sub:Alternative-proof-of}). Furthermore,
as shown in \cite{bern2} \begin{equation}
\psi_{t}-S_{t}^{*}\psi_{0}=C_{t}\label{eq:action on weights}\end{equation}
 where $\psi_{t}=\psi_{0}+u_{t}$ and $C_{t}$ is a constant for each
$t,$ i.e. \begin{equation}
\omega_{u_{t}}=S_{t}^{*}\omega_{0}.\label{eq:action on curvature}\end{equation}
Now, by the bound \ref{eq:bound on omega t} on $\omega_{u_{t}}$
the point-wise norm of the vector field $V_{t}$ wrt the metric $\omega_{0}$
is uniformly bounded in $t$ on all of $X.$ Hence, the equation \ref{eq:ode}
and a basic normal families argument applied to the family $S_{t}$
yields a subsequence $S_{t_{j}}$ and a holomorphic map $S_{1}$ on
$X$ such that $S_{t_{j}}(x)\rightarrow S_{1}(x)$ uniformly on $X$
(wrt the distance defined by the metric $\omega_{0})$ where $S_{1}$
is a biholomorphism according to the relation \ref{eq:action on curvature}.
Finally, letting $t_{j}\rightarrow1$ in the relation \ref{eq:action on weights}
and using that $u_{t}$ is continuous on $[0,1]\times X$ finishes
the proof of the uniqueness statement in the theorem. 
\begin{rem}
It was not explicetly pointed out in \cite{bern2} that $S_{t}$ lifts
to $L,$ but this fact follows from lemma 12 in \cite{do1}. 
\end{rem}

\subsection{Proof of Corollary \ref{cor:homeg}}

First observe that we may assume that $H^{0}(X,L+K_{X})$ has a non-zero
element (otherwise the corrollary is trivally true). But since $(X,L)$
is homogenuous it then follows immediately that $L+K_{X}$ is globally
generated. Hence, the conditions in Theorem \ref{thm:main} are satisfied.

Assume now that $\omega_{0}$ is invariant under the holomorphic and
transitive action of $K$ on $X.$ Then it follows that $0$ is a
critical point. Indeed, the volume form $\omega_{0}^{n}/n!$ is invariant
under the action of $K$ on $X$ and so is the Bergman measure $\beta(0)$
(since it is defined in terms of the $K-$invariant weight $\psi_{0}).$
Since the action of $K$ is transitive and both measures are normalized
it follows that the function $(\omega_{0}^{n}/n!)/\beta(0)$ on $X$
is constant and hence equal to one. In other words, $0$ is a critial
point and by Theorem \ref{thm:main} the inequality in the statement
of Corollary \ref{cor:homeg} then holds. Finally, the last statement
of the corollary is a direct consequence of the uniqueness part of
Theorem \ref{thm:main}.

\subsection{Proof of Corollary \ref{cor:moser}}

Let us first prove the first statement of the corollary. Since $\mathcal{C}^{\infty}(X)$
is dense in $W^{1,2}(X)$ we may assume that $u$ is smooth. First
observe that \begin{equation}
\mathcal{F}_{\omega_{0}}(u)\leq\mathcal{F}_{\omega_{0}}(P_{\omega_{0}}u).\label{eq:pf of cor mos}\end{equation}
 To see this note that, since, by definition, $P_{\omega_{0}}u\leq u$
the fact that $\mathcal{L}_{\omega_{0}}$ is increasing immediately
implies $\mathcal{L}_{\omega_{0}}(u)\geq\mathcal{L}_{\omega_{0}}(P_{\omega_{0}}u).$
Next, observe that by the cocycle property of $\mathcal{F}_{\omega_{0}}(u)$
\[
\mathcal{E}_{\omega_{0}}(u)=\mathcal{E}_{\omega_{0}}(P_{\omega_{0}}u)+\int_{X}(u-P_{\omega_{0}}u)(\omega_{u}+\omega_{P_{\omega_{0}}u})/2\]
 But, since, as is well-known the measure $\omega_{P_{\omega_{0}}u}$
is supported on the open set $\{u>P_{\omega_{0}}u\}$ (cf. Prop. 1.10
in \cite{b-b} for a generalization) we have that the last term above
is equal to \[
\int_{X}(u-P_{\omega_{0}}u)(\omega_{u}-\omega_{P_{\omega_{0}}u})/2=\int_{X}(u-P_{\omega_{0}}u)(dd^{c}(u-P_{\omega_{0}}u)=\]
\[
=-\int_{X}d(u-P_{\omega_{0}}u)\wedge d^{c}(u-P_{\omega_{0}}u)\leq0,\]
 where we have integrated by parts in the last equality, which is
justified since, for example, by Theorem \cite{be1} $P_{\omega_{0}}u$
is in $\mathcal{C}^{1,1}(X)$ (but using that $P_{\omega_{0}}u$ is
in $\mathcal{C}^{0}(X)$ is certainly enough by classical potential
theory). Hence, $\mathcal{E}_{\omega_{0}}(u)\leq\mathcal{E}_{\omega_{0}}(P_{\omega_{0}}u)$
which finishes the proof of \ref{eq:pf of cor mos}. Since, $\omega_{P_{\omega_{0}}u}\geq0$
uniform approximation let's us apply Corollary \ref{cor:homeg} to
deduce \[
\mathcal{F}_{\omega_{0}}(u)\leq\mathcal{F}_{\omega_{0}}(P_{\omega_{0}}u)\leq0\]
 which proves the first statement of the corollary. 

Finally, the uniqueness will follow from Corollary \ref{cor:homeg}
once we know that a maximizer $u$ of $\mathcal{F}_{\omega_{0}}$
on $W^{1,2}(S^{2})$ is smooth with $\omega_{u}>0.$ By the previous
step we may assume that $\omega_{u}\geq0.$ But since $W^{1,2}(S^{2})$
is a linear space containing $\mathcal{C}^{\infty}(X)$ the Euler-Lagrange
equations $\omega_{u_{0}}+dd^{c}u=\beta(u)$ hold for the maximizer
$u.$ Since $\beta(u)=e^{-u}\rho>0$ with $\rho$ smooth, local elliptic
estimates for the Laplacian then show that $u$ is in fact smoth with
$\omega_{t}>0.$ All in all we have proved that \begin{equation}
-\mathcal{L}_{k\omega_{0}}(u)\leq-\mathcal{E}_{k\omega_{0}}(u)\label{eq:abs ineq in pf cor fang}\end{equation}
 for $L=k\mathcal{O}(1)$ with conditions for equality.

\subsubsection{\label{sub:Explicit-expression}Explicit expression}

To make the previous inequality more explicit note that, by definition,
\[
\mathcal{E}_{k\omega_{0}}(u):=\frac{1}{2\int k\omega_{0}}\int(udd^{c}u+u2k\omega_{0})=\frac{1}{2k}\int udd^{c}u+\int u\omega_{0})\]
 Moreover, since for $X=\P^{1}$ we have $K_{X}=-\mathcal{O}(2)$
it follows that $L+K_{X}=\mathcal{O}(k-2)=:\mathcal{O}(m).$ Under
this identification the scalar product on $H^{0}(X,L+K_{X})$ may
be written as \[
\left\langle s,t\right\rangle _{k\psi_{0}+u}=c\int s\bar{t}e^{-(u+m\psi_{0})}\omega_{0}\]
 using that $\omega_{0}$ is a Kähler-Einstein metric, i.e. $\omega_{0}(z):=dd^{c}\psi_{0}=ce^{-2\psi_{0}}idz\wedge d\bar{z}$
for some numerical constant $c.$ Since the funtional $\mathcal{L}$
is invariant under on overall scaling in definition of the scalar
product $\left\langle \cdot,\cdot\right\rangle _{\psi}$ we may as
well assume that $c=1.$ Hence, since $N_{m}=m+1,$ we have\begin{equation}
\mathcal{L}_{m}(u):=(m+1)\mathcal{L}_{\omega_{0,k}}(u)=-\log\mbox{det}(c_{i}c_{j}\int_{\C}\frac{z^{i}\bar{z}^{j}}{(1+z\bar{z})^{m}}e^{-u}\omega_{0}),\label{eq:def of l m in pf fang}\end{equation}
 where $c_{i}=(\int\frac{\left|z^{i}\right|^{2}}{(1+z\bar{z})^{m}}\omega_{0})^{-1/2}.$
Hence, the inequality \ref{eq:abs ineq in pf cor fang} may be expressed
as \begin{equation}
\log\mbox{det}(c_{i}c_{j}\int_{\C}\frac{z^{i}\bar{z}^{j}}{(1+z\bar{z})^{m}}e^{-u}\omega_{0})\leq-\frac{m+1}{(m+2)}\frac{1}{2}\int(udd^{c}u)-(m+1)u\omega_{0}.\label{eq:ineq for l m in pf fang}\end{equation}
 In particular, when $m=0$ the inequality above reads \[
\log(\int_{S^{2}}e^{-u}\omega_{0})\leq\frac{1}{4}\int(udd^{c}u)+\int u\omega_{0}).\]
Finally, to compare with the notation of Onofri \cite{on}, note that,
by definition, $dd^{c}u=\frac{i}{2\pi}\partial\bar{\partial u}$ and
hence, integration by parts gives, \[
-\int udd^{c}u=\frac{1}{\pi}\frac{i}{2}\int\partial u\wedge\bar{\partial u}.\]
Moreover, in terms of a given local holomorphic coordinate $z=x+iy,$
we have $\frac{i}{2}\partial u\wedge\bar{\partial u}=\frac{1}{4}\left|\nabla u\right|^{2}dx\wedge dy$,
where $\nabla=(\partial_{x},\partial_{y})$ is the gradient wrt the
local Euclidian metric. By conformal invariance we hence obtain $-\int udd^{c}u=\frac{1}{4\pi}\int\left|\nabla u\right|^{2}d\mbox{Vol}_{g}$
for\emph{ any} Riemannian metric $g$ on $S^{2}$ conformally equivalent
to $g_{0}.$ In particular, taking $g$ as the usual round metric
on $S^{2}$ induced by its embedding as the unit-sphere in Eucledian
$\R^{3}$ finally gives \[
\log(\int_{S^{2}}e^{-u}d\mbox{Vol}{}_{g}/4\pi)\leq\frac{1}{4}\int\left|\nabla u\right|^{2}-u)d\mbox{Vol}{}_{g}/4\pi),\]
 using that $\omega_{0}=d\mbox{Vol}{}_{g}/4\pi.$ This is precisely
the inequality proved by Onofri \cite{on}.
\begin{rem}
\label{rem:sharp}The inequality in Corollary \ref{cor:moser} is
not only sharp in the sense that it is saturated for \emph{some} function
(for example $u=0)$, but also in the sense that if there exist constants
$A,B$ with $B\geq0$ such that \begin{equation}
-\mathcal{L}_{m}(u)\leq-A\int_{S^{2}}u\omega_{0}+B\int du\wedge d{}^{c}u,\label{eq:ansats}\end{equation}
 for all smooth $u,$ then $A=m+1$ and $B\geq\frac{m+1}{(m+2)}\frac{1}{2}.$
Indeed, by the conditions for equality in Corollary \ref{cor:moser}
we may find a function $u,$ which is not identically constant, saturating
the inequality in Corollary \ref{cor:moser}. After adding a suitable
constant to $u$ we may assume that $\int_{S^{2}}u\omega_{0}=0$ and
hence that \[
-\mathcal{L}_{m}(u)=\frac{m+1}{(m+2)}\frac{1}{2}\int_{S^{2}}du\wedge d{}^{c}u.\]
 Now using \ref{eq:ansats} it follows that \[
\frac{m+1}{(m+2)}\frac{1}{2}\int_{S^{2}}du\wedge d{}^{c}u\leq B\int_{S^{2}}du\wedge d{}^{c}u,\]
 i.e. that $B\geq\frac{m+1}{(m+2)}\frac{1}{2}.$ Next, taking $u$
as a constant $c$ in \ref{eq:ansats} gives \[
-c(m+1)\leq-cA\]
 But since $c$ was arbitrary it follows that $A=m+1.$ In fact, a
variant of the previous argument shows that Corollary \ref{cor:homeg}
is sharp in a similar sense (by replacing $\int du\wedge d{}^{c}u/2$
with Aubin's $J-$functional \ref{eq:def of j}). The details are
omitted.
\end{rem}

\subsection{Proof of Corollary \ref{cor:max of det}}

We keep the notation from the previous section. For simplicity we
will write $\Delta_{u}:=\Delta_{\bar{\partial}_{u}}^{(m)}.$ Following
\cite{g-s} we will first express $\det\Delta_{u}$ in terms of $-\mathcal{L}_{m}(u).$
Since the dimension $h_{1}(\mathcal{O}(m))$ of the first Dolbeault
cohomology group $H^{1}(\mbox{\P},\mathcal{O}(m))$ vanishes, the
anomaly formula of Bismut-Gillet-Soulé \cite{b-g-s} for the Quillen
metric on the determinant line $\bigwedge^{N_{m}}H^{0}(\mbox{\P},\mathcal{O}(m))$
reads as follows in our notation: \[
\log(\frac{\det\Delta_{u}}{\det\Delta_{0}})=\int\mbox{Td}(X,\omega_{0})\wedge\tilde{ch}(e^{-u}h_{0}^{\otimes m},h_{0}^{\otimes m})-\mathcal{L}_{m}(u),\]
 where $\mbox{Td}(X,\omega_{0})=(1+A\omega_{0})$ is the \emph{Todd
class }of $TX$ represented by the constant curvature metric $\omega_{0}$
expressed in terms of certain numerical constant $A,$ and $\tilde{ch}(e^{-u}h_{0}^{\otimes m},h_{0}^{\otimes m})=u+(u\omega_{u}+\omega_{0})/2$
is the \emph{Bott-Chern class} of the two metrics $e^{-u}h_{0}^{\otimes m}$
and $h_{0}^{\otimes m}$ on $\mathcal{O}(m)$ associated to the \emph{Chern
character} of $\mathcal{O}(m).$ In fact, $A=1,$ but the actual value
will turn out to be immaterial. Expanding gives \[
\log(\frac{\det\Delta_{u}}{\det\Delta_{0}})=\int udd^{c}u/2+B\int u\omega_{0}-\mathcal{L}_{m}(u)\]
for some constant $B.$ Since the left hand side is invariant under
translations of $u$ by constants it follows that $B=N.$ The previous
formula is precisely the one appearing in Prop 1 in \cite{g-s}),
since $h_{1}(\mathcal{O}(m))=0).$ Applying the inequality \ref{eq:ineq for l m in pf fang}
hence gives \[
\log(\frac{\det\Delta_{u}}{\det\Delta_{0}})\leq-\frac{1}{2}(1-\frac{m+1}{(m+2)})\int du\wedge d^{c}u=-\frac{1}{2}(\frac{1}{(m+2)})\int du\wedge d^{c}u\leq0\]
In particular, the lhs vanishes precisely when the gradient of $u$
does, i.e. when $u$ is a constant. This hence finishes the proof
of Corollary \ref{cor:max of det}. 
\begin{rem}
In the general anomaly formula in \cite{b-g-s} the metric $\omega_{0}$
is allowed to vary as well. In particular, when $L=\mathcal{O}(0)$
is the trivial holomorphic line bundle over $S^{2},$ the metric $h=1$
is kept constant, but the conformal metric $g_{u}=e^{-u}g_{0}$ on
$TS^{2}$ varies with $u,$ the anomaly formula in \cite{b-g-s} is
equivalent to Polyakov's formula and then $\log(\frac{\det\Delta_{g_{u}}}{\det\Delta_{g_{0}}})$
coincides with the functional $\mathcal{F}_{0}$ (up to a a multiplicative
constant) \cite{ch}. 
\end{rem}

\subsection{\label{sub:Arithemtic-applications}Arithmetic applications }

In this section we will briefly consider possible applications of
Theorem \ref{thm:main} to Arithmetic (Arakelov) geometry in the form
of\emph{ effective Riemann-Roch type inequalities}. In the general
setting $X$ will be the complex points of an \emph{arithmetic variety}
$X_{\Z}$ i.e. of a regular scheme, projective and flat over $\Z$
\cite{s--}. 

Consider for simplicity the case when $X=\P^{1}$ and denote as before
by $z$ the holomorphic variable in an affine piece of $\P^{1}.$
Let $h_{L^{2}}^{0}(\mathcal{O}(m),u)_{\Z}$ denote the logarithm of
the number of all polynomials $p_{m}$ in $z$ of degree at most $m$
such that $p_{m}$ has coefficients in $\Z+i\Z$ and such that $\left\Vert p_{m}\right\Vert _{u+m\psi_{0}}^{2}:=\int_{\C}\left|p_{m}(z)\right|e^{-(u+m\psi_{0})}\omega_{0}\leq1.$
The invariant $h_{L^{2}}^{0}(\mathcal{O}(m),u)$ is a non-standard
variant of basic invariants studied in Arakelov geometry, where one
usually only considers sections defined over $\Z$ (i.e. defined by
counting those polynomials $p_{m}$ as above which are invariant under
complex conjugation) and\emph{ sup-norms} instead of $L^{2}-$norms
(see sec. VIII, 2 in \cite{s--}). The arguments below can be adapted
to such invariants, but there will be an extra non-explicit term coming
from the distortion between the sup-norms and the $L^{2}-$norms determined
by $u.$ 

Fixing the base of monomials $(z^{j})$ in $H^{0}(\P^{1},\mathcal{O}(m))$
we may identify $H^{0}(\P^{1},\mathcal{O}(m))$ with $\C^{N_{m}}=\R^{2N_{m}},$
where $N_{m}=m+1.$ Then $h_{L^{2}}^{0}(\mathcal{O}(m),u)$ is simply
the logarithm of the number of points in the standard lattice in $\R^{2N_{m}}$
contained in a convex body determined by $u.$ Given Corollary \ref{cor:moser},
Minkowski's classical theorem is used to give an effective lower bound
on $h_{L^{2}}^{0}(\mathcal{O}(m),u):$ \[
h_{L^{2}}^{0}(\mathcal{O}(m),u)\geq(m+1)\mathcal{E}_{(m+2)\omega_{0}}(u)+C_{m},\]
 where $C_{m}$ is a certain explicit constant only depending on $m$
(see below). Since the argument is standard in Arakelov geometry (compare
p.164 in \cite{s--}) we will only briefly indicate it. First, by
basic linear algebra, we have that $\mathcal{L}_{m}(u)=\log(\mbox{Vol}\mathcal{B}(u+m\psi_{0})/\mbox{Vol}\mathcal{B}(m\psi_{0})),$
in terms of the volume of the unit-balls of the $L^{2}-$norms induced
by the weights $u+m\psi_{0}$ and $m\psi_{0},$ respectevely, wrt
Lesbegue measure in $\R^{2N_{m}}.$ Denote by $V_{m}$ the volume
of the unit-ball in $\R^{2N_{m}}.$ Then, using simple cocycle properties
of $\mathcal{L}_{m}(u)$ (defined in formula \ref{eq:def of l m in pf fang})
we get \[
\mathcal{L}_{m}(u)=\log(\mbox{Vol}\mathcal{B}(u+m\psi_{0})-\log V_{m}+Z_{m},\]
 where $Z_{m}=\log\mbox{det}_{0\leq i,j\leq m}(\int_{\C}\frac{z^{i}\bar{z}^{j}}{(1+z\bar{z})^{m}}\omega_{0}).$
Moreover, by Minkowski's theorem (see \cite{s--}) \[
h_{L^{2}}^{0}(\mathcal{O}(m),u)\geq\log(\mbox{Vol}\mathcal{B}(u+m\psi_{0})-(\log2)(2N_{m}).\]
 All in all this means, using Corollary \ref{cor:moser}, that \[
h_{L^{2}}^{0}(\mathcal{O}(m),u)\geq(m+1)\mathcal{E}_{(m+2)\omega_{0}}(u)+\log V_{m}-Z_{m}-(\log2)(2N_{m}).\]
 It can be checked that, when $u+m\psi_{0}=m\psi,$ where $dd^{c}\psi(z)\geq0$
the inequality above is an asymptotic equality (this is a special
case of formula \ref{eq:leading as of l}). Moreover, the rhs above
is equal to $m^{2}\mathcal{E}_{\omega_{+}}(\psi-\log^{+}\left|z\right|^{2})+o(m^{2}),$
where $\omega_{+}(z)=dd^{c}\log^{+}\left|z\right|^{2}.$ This means
that the lower bound above is consistent as it must with the asymptotic
arithmetic Riemann-Roch formula in \cite{g-sa} (see also Theorem
2' on p. 163 in \cite{s--}). In fact, the leading coefficent can
be shown to coincide in this case with a (normalized) arithmetic top-intersection
number, since $\mathcal{E}_{\omega_{+}}(\psi-\log^{+}\left|z\right|^{2})$
is precisely the classical weighted logarithmic density of $(\C,\psi)$
(see \cite{b-b} and references therein). The details are omitted.

\subsection{\label{sub:Alternative-proof-of}Alternative proof of uniqueness}

In this section we will show how to prove the {}``uniqueness'' in
Theorem \ref{thm:main} only using the regularity of the geodesics
furnished by Theorem \ref{thm:berman-dem} and the theory of fully
non-linear elliptic operators in $n$ complex dimensions (applied
to the Monge-Ampère operator on $X$ as in \cite{bl0}). In particular,
this latter theory amounts to the basic \emph{linear} elliptic estimates
for the Laplacian when $n=1.$

Recall that $W^{r,p}(X)$ denotes the Sobolev space of all distributions
$f$ on $X$ such that $f$ and the local derivatives of total order
$r$ are in $W^{0,p}(X):=L^{p}(X)$ (equvalently, all local derivatives
of total order $\leq r$ are in $L^{p}(X)).$ If $f$ is function
on $M=[0,1]\times X$ we will write \emph{$f_{t}\in W^{r,p}(X)$ uniformly
wrt $t$} if the corresponding Sobolev norms on $(X,\omega_{0})$
of $f_{t}$ are uniformly bounded in $t.$ We will also use the following
basic facts repeatedly:
\begin{itemize}
\item If $f$ is a function on $M$ such that \emph{$f_{t}\in W^{r,p}(X)$
uniformly wrt $t,$} then the distribution $f$ is in $W^{r,p}(\dot{M})$
and the corresponding Sobolev norms on $\dot{M}$ are bounded.
\item Partial derivatives of distributions commute
\item If $f,g\in W^{1,p}(X)$ for any $p>1.$ Then $fg\in W^{1,p}(X)$ for
any $p>1$ and Leibniz product rule holds for the distributional derivatives.
\end{itemize}
Note that as in section \ref{sub:Proof-of-Theorem-main} it will be
enough to prove that the geodesic $u_{t}$ is smooth wrt $(t,x)$
in the \emph{interiour} of $M.$ However, the arguments below will
even give uniform estimates on the local Sobolev norms up to the boundary
of $M.$

Assume now that the boundy data $u_{0}$ and $u_{1},$ defining the
geodesic $u_{t}$ are in $\mathcal{C}^{1,1}(X).$ Since $u_{t}$ is
convex in $t$ the right derivative (or tangent vector) $v_{t}(x):=\frac{d}{dt}_{+}u_{t}$
exists for all $(t,x).$
\begin{lem}
\emph{\label{lem:v is bded}The right tangent vector} \emph{$v_{t}$
of $u_{t}$ at $t$ is uniformly bounded on $M.$} \end{lem}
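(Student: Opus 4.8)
The plan is to sandwich the geodesic $u_t$ between two explicit subgeodesics that are affine in $t$, whose slopes are controlled by the sup-norm of $u_1-u_0$, and then to read off the bound on $v_t$ from the convexity of $t\mapsto u_t(x)$. Recall that, by convexity, for each fixed $x$ the right derivative $v_t(x)=\frac{d}{dt}_+u_t(x)$ exists and is nondecreasing in $t$; hence it suffices to bound the right derivative at $t=0$ from below and the left derivative at $t=1$ from above, since for $t\in[0,1)$ one has $v_0(x)\le v_t(x)\le \frac{d}{dt}_-u_1(x)$. Both $u_0$ and $u_1$ lie in $\mathcal{C}^{1,1}(X)\subset\mathcal{C}^0(X)$ on the compact manifold $X$, so the quantities $C_0:=\sup_X(u_0-u_1)$ and $C_1:=\sup_X(u_1-u_0)$ are finite.

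For the lower bound I would introduce the competitor $W(w,x):=u_0(x)-C_0\log|w|$ on $M$. Since $\log|w|$ is pluriharmonic on the annulus, $dd^cW+\pi_X^*\omega_0=dd^c_xu_0+\omega_0\ge 0$, so $W$ is a bounded $\pi_X^*\omega_0$-plurisubharmonic function (a subgeodesic); moreover $W=u_0=U$ on $\{|w|=1\}$ and $W=u_0-C_0\le u_1=U$ on $\{|w|=e\}$ by the choice of $C_0$. The maximality of the envelope $U$ in \ref{eq:envelop} then forces $W\le U$ on $M$, that is $u_0(x)-C_0 t\le u_t(x)$. Dividing by $t$ and letting $t\to 0^+$ gives $v_0(x)\ge -C_0$, and hence by monotonicity $v_t(x)\ge -C_0=\inf_X(u_1-u_0)$ for every $(t,x)$.

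For the upper bound I would run the symmetric argument with the competitor $V(w,x):=u_1(x)-C_1(1-\log|w|)$. Again $dd^cV+\pi_X^*\omega_0=dd^c_xu_1+\omega_0\ge 0$, while $V=u_1-C_1\le u_0=U$ on $\{|w|=1\}$ and $V=u_1=U$ on $\{|w|=e\}$, so $V\le U$, i.e. $u_t(x)\ge u_1(x)-C_1(1-t)$. This means $\frac{u_1(x)-u_t(x)}{1-t}\le C_1$ for all $t<1$; letting $t\to 1^-$ bounds the left derivative at $t=1$ by $C_1$, whence $v_t(x)\le C_1=\sup_X(u_1-u_0)$. Combining the two estimates yields $\inf_X(u_1-u_0)\le v_t(x)\le \sup_X(u_1-u_0)$ uniformly on $M$, which is the claim.

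The one point that requires care — and the only genuine obstacle — is the comparison step $W\le U$ (and $V\le U$): the envelope in \ref{eq:envelop} is defined as a supremum over \emph{smooth, strictly} $\pi_X^*\omega_0$-positive competitors, whereas $W$ and $V$ are only $\mathcal{C}^{1,1}$ and merely nonnegatively curved. I would dispatch this by invoking the Bedford–Taylor comparison principle for the homogeneous Monge–Amp\`ere operator (both $U$ and the barriers are bounded, $U$ solves \ref{eq:dirichlet pr}, and the barriers are subsolutions with smaller boundary values), or equivalently by approximating $u_0$ and $u_1$ from below by smooth strictly $\omega_0$-plurisubharmonic functions via Richberg's theorem; either route places $W$ and $V$ below the envelope. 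It is worth noting that this argument uses only the \emph{continuity} of the boundary data together with $\omega_0$-plurisubharmonicity, and in particular does not invoke the finer regularity of Theorem \ref{thm:berman-dem}, which is needed only at the subsequent stages of the alternative uniqueness proof.
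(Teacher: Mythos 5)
Your argument is correct, and it shares the skeleton of the paper's proof -- place an explicit subgeodesic with controlled slope below the envelope $U$ and read off the bound on $v_{t}$ from the convexity of $t\mapsto u_{t}(x)$ -- but your barriers are genuinely different and more elementary. The paper uses the single two-sided barrier $\tilde{u}_{t}=(1-t)u_{0}+tu_{1}+Ae^{t}$ (which must be normalized so as to match $u_{0},u_{1}$ at the endpoints before it can be compared with the envelope), and checking that it is a subgeodesic requires taking $A\gg1$ and invoking the bounds $0\leq dd^{c}u_{i}+\omega_{0}\leq C\omega_{0}$, i.e.\ the $\mathcal{C}^{1,1}$ regularity of the endpoints, in order to absorb the mixed terms $dt\wedge d^{c}(u_{1}-u_{0})$ by the strict positivity of $dd^{c}\left|w\right|$ in the $w$-direction. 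Your competitors $u_{0}-C_{0}\log\left|w\right|$ and $u_{1}-C_{1}(1-\log\left|w\right|)$ freeze the $x$-dependence at one endpoint and are affine in $t=\log\left|w\right|$, so they are $\pi_{X}^{*}\omega_{0}$-plurisubharmonic for free ($\log\left|w\right|$ being pluriharmonic on the annulus) and have the correct boundary inequalities by the choice of $C_{0},C_{1}$; only the continuity and $\omega_{0}$-plurisubharmonicity of the data enter, which is why, as you observe, the lemma survives with merely continuous boundary values. You are also more explicit than the paper on the one delicate step, namely why a continuous competitor with $dd^{c}+\pi_{X}^{*}\omega_{0}\geq0$ (rather than a smooth, strictly positively curved one) lies below the envelope \ref{eq:envelop}; either of your two remedies (the Bedford--Taylor comparison principle applied to the maximal solution $U$, or scaling by $1-\epsilon$ and regularizing via Richberg before passing to the limit) is standard and closes this point, on which the paper's own comparison $\tilde{u}_{t}\leq u_{t}$ also tacitly relies. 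Nothing is lost by your route, since the Lipschitz psh extension $\tilde{u}_{t}$ itself is not reused elsewhere in the paper.
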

\begin{proof}
First observe that by the convexity in $t$\[
u_{t}-u_{0}\leq t(u_{1}-u_{0})\leq C_{1}t,\]
 using that $u_{0}$ and $u_{1}$ are continuos and hence uniformly
bouned on $X$ in the last step. Hence, $v_{t}\leq C.$ To get a lower
bound first observe that there is a {}``psh extension'' $\tilde{u}_{t}$
which is uniformly Lipshitz. Indeed, just take $\tilde{u}_{t}:=(1-t)u_{0}+tu_{1}+Ae^{t}$
for $A>>1.$ Using that $0\leq dd^{c}u_{0},\, dd^{c}u_{1}\leq C\omega_{0}$
it is straight-forward to check that $dd^{c}\tilde{U}+\pi^{*}\omega_{0}\geq0$
on $M$ for $A$ sufficently large. Since $U$ is defined by the upper
envelope \ref{eq:envelop} it follows that $\tilde{u}_{t}\leq u_{t}$
and hence \[
u_{t}-u_{0}\geq\tilde{u}_{t}-\tilde{u}_{0}\geq C_{2}t.\]
giving $v_{0}\geq C_{2}.$ Finally, by convexity we get $C_{2}\leq v_{0}\leq v_{t}\leq C_{1}$
which proves the lemma.\end{proof}
\begin{prop}
\label{pro:unique alt}Let $u_{0}$ be a critical point of $\mathcal{F}_{\omega_{0}}$
on $\overline{\mathcal{H}}_{\omega_{0}}\cap\mathcal{C}^{1,1}(X),$
$u_{1}$ an arbitrary element in $\overline{\mathcal{H}}_{\omega_{0}}\cap\mathcal{C}^{1,1}(X)$
and $u_{t}$ the geodesic connecting $u_{0}$ and $u_{1}.$ If $\mathcal{L}_{\omega_{0}}(u_{t})$
is affine, then there is an automorphism $S_{1}$ of $(X,L)$, homotopic
to the identity, such that $u_{1}-u_{0}=S_{1}^{*}\psi_{0}-\psi_{0}.$\end{prop}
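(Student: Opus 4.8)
The plan is to run Berndtsson's mechanism from Theorem \ref{thm:(Berndtsson)-Let-} --- produce a holomorphic vector field $V_t$ from the affineness of $t\mapsto\mathcal{L}_{\omega_0}(u_t)$, integrate its flow to automorphisms $S_t$, and conclude via \ref{eq:action on weights}--\ref{eq:action on curvature} --- but using \emph{only} the interior regularity of $u_t$ on $X$ at fixed $t$ supplied by Theorem \ref{thm:berman-dem}, rather than the regularity of $U$ on $M$ (and up to $\partial M$) invoked in the proof of Theorem \ref{thm:main}. Thus the admissible inputs are the upper bound $\omega_{u_t}\leq C\omega_0$ of Theorem \ref{thm:berman-dem}, the boundedness of the right tangent field $v_t:=\partial_t u_t$ from Lemma \ref{lem:v is bded}, and the three elementary Sobolev facts recalled at the start of this section.

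First I would reduce to the situation where every $u_t$ is a maximizer. As in the proof of Theorem \ref{thm:main}, the functional $\mathcal{E}_{\omega_0}(u_t)$ is affine (Proposition \ref{pro:e is affine}) and, by hypothesis, so is $\mathcal{L}_{\omega_0}(u_t)$; hence $\mathcal{F}_{\omega_0}(u_t)$ is affine, and since $u_0$ is critical the right derivative of $\mathcal{F}_{\omega_0}(u_t)$ at $t=0$ vanishes (it equals $\int_X v_0(\omega_{u_0}^n/n!-\beta_{u_0})=0$ by the Euler--Lagrange equation at $u_0$). Being affine, $\mathcal{F}_{\omega_0}(u_t)$ is then constant, so each $u_t$ maximizes $\mathcal{F}_{\omega_0}$, and the projection argument of Section \ref{sub:Proof-of-Theorem-main} gives that $u_t$ solves \ref{eq:e-l for f} for every $t$. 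Since $\beta_{u_t}=e^{-u_t}\rho_t$ with $\rho_t$ bounded below, this furnishes the lower bound matching \ref{eq:bound on omega t}, so $1/C'\,\omega_0\leq\omega_{u_t}\leq C'\omega_0$ and $\omega_{u_t}$ is invertible; an elliptic bootstrap of \ref{eq:e-l for f} (the linear Laplacian estimates when $n=1$, the fully nonlinear theory of \cite{bl0} in general) moreover shows that $u_t$ is smooth on $X$ for each fixed $t$, with estimates uniform in $t$.

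The core of the argument is to obtain Sobolev control of $v_t$ and to deduce that $V_t$, defined by \ref{eq:int multip} as $V_t=\omega_{u_t}^{-1}\bar{\partial}_X v_t$, is holomorphic. Differentiating \ref{eq:e-l for f} in $t$ yields $\Delta_{\omega_{u_t}}v_t$ equal to a term linear and non-local in $v_t$; since $v_t$ is bounded (Lemma \ref{lem:v is bded}) and the Bergman density \ref{eq:bergman meas} is smooth, this term is bounded, and elliptic estimates give $v_t\in W^{2,p}(X)$ uniformly in $t$ for every $p$. With this regularity in hand the second-derivative identity underlying Theorem \ref{thm:(Berndtsson)-Let-} --- in the Bergman-kernel form recalled in the appendix, and legitimized here by the commutation of distributional partials and the Leibniz rule from the listed Sobolev facts --- expresses $\frac{d^2}{dt^2}\mathcal{L}_{\omega_0}(u_t)$ as the integral over $X$ of $|\bar{\partial}_X V_t|^2_{\omega_{u_t}}$ against the Bergman density. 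Affineness of $\mathcal{L}_{\omega_0}(u_t)$ forces this integral to vanish, whence $\bar{\partial}_X V_t=0$; being a bounded weakly holomorphic field, $V_t$ is a genuine holomorphic vector field on $X$ for each $t$, and the same differentiated relations give holomorphic dependence on $t$. I expect this step --- making Berndtsson's curvature computation rigorous at the weak regularity of Theorem \ref{thm:berman-dem}, and in particular securing $v_t\in W^{2,p}(X)$ uniformly --- to be the main obstacle, since the joint smoothness on $M$ that rendered the computation transparent in \cite{bern2} is here unavailable.

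Finally, because $v_t\in W^{2,p}(X)$ makes $\bar{\partial}_X v_t$ bounded while \ref{eq:bound on omega t} bounds $\omega_{u_t}^{-1}$, the norm $\|V_t\|_{\omega_0}$ is uniformly bounded on $M$, so I can integrate the flow \ref{eq:ode} with $S_0=I$ on $[0,1-\epsilon]$ to obtain a holomorphic family $S_t\in\mbox{Aut}_0(X,L)$. Differentiating in $t$ and using that $V_t$ is holomorphic yields \ref{eq:action on weights}, $\psi_t-S_t^*\psi_0=C_t$, and \ref{eq:action on curvature}, $\omega_{u_t}=S_t^*\omega_0$, exactly as in \cite{bern2}. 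A normal-families argument as in Section \ref{sub:Proof-of-Theorem-main} extracts a limiting biholomorphism $S_1$ as $t\to1$, and the continuity of $u_t$ on $[0,1]\times X$ promotes \ref{eq:action on weights} to $u_1-u_0=S_1^*\psi_0-\psi_0$ modulo an additive constant, which is the assertion.
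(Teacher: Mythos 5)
Your skeleton is the paper's own: slice-wise smoothness of $u_t$ from the Euler--Lagrange equation \ref{eq:e-l for f} plus Blocki's regularity, uniform elliptic control of $v_t$ from the differentiated Euler--Lagrange equation, holomorphicity of $V_t$ forced by affineness of $\mathcal{L}_{\omega_0}(u_t)$, then integration of the flow and a normal-families limit. But the proposal leaves open exactly the points where the weak regularity bites, and you have flagged them ("I expect this step to be the main obstacle") rather than closed them. First, "differentiating \ref{eq:e-l for f} in $t$" is not a legitimate operation at this stage: $u_t$ is a priori only convex in $t$ and $\mathcal{C}^{1,1}_{\C}$ in $x$ for each fixed $t$, with no joint regularity, so the chain rule does not apply to $t\mapsto\omega_{u_t}^n$. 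The paper devotes Lemma \ref{lem:deriv of e-l} to this, and its proof is not soft: it uses that $\mathcal{E}$, $\mathcal{L}$ and $\mathcal{F}$ are all affine (hence constant) along $u_t$ to compute $\frac{d}{dt}_{t=0+}\mathcal{E}(u_t)$, and then the identity relating $\mathcal{E}$ to Aubin's $J$-functional to conclude $\int d(u_t-u_0)\wedge d^c(u_t-u_0)\wedge\omega_0=o(t)$, which is what legitimizes the formal differentiation. Without this, your assertion that $\Delta_{\omega_{u_t}}v_t$ equals a bounded non-local term, hence $v_t\in W^{2,p}(X)$ uniformly, has no starting point.

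Second, the step "affineness of $\mathcal{L}$ forces $\bar{\partial}_X V_t=0$" requires the second-derivative formula \ref{eq:second der of l as proj}, which involves $\partial_t\partial_{\bar t}u_t$; one must first know that this distribution is a bounded function equal to $|\bar{\partial}_X v_t|^2_{\omega_{u_t}}$. That is Lemma \ref{lem:normal-normal}, i.e. the homogeneous Monge--Amp\`ere equation on $M$ re-expressed, and establishing it at this regularity is a genuine regularization argument needing the Lipschitz bound on $U$, the uniform bounds on $\Delta_X u_t$ and $\Delta_X v_t$, and weak continuity of the Monge--Amp\`ere operator --- it does not follow from "commutation of distributional partials and Leibniz" alone. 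Note also that $\partial_t\partial_{\bar t}\mathcal{L}_{\omega_0}(u_t)$ is not literally $\int_X|\bar{\partial}_X V_t|^2_{\omega_{u_t}}\beta_{u_t}$: it is a difference of two terms, and holomorphicity of $V_t$ on $X$ comes from the equality case of the H\"ormander estimate via the Bochner--Kodaira identity. Finally, holomorphic dependence of $V_t$ on $t$ does not come for free from "the same differentiated relations": the paper first upgrades to $\Delta_M u\in W^{1,p}(M)$, hence $u\in W^{3,p}(M)$ and locally $\mathcal{C}^2(M)$ (Step 3), before the distributional Leibniz computation giving $(\partial_{\bar t}V)\omega=0$ makes sense. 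So the architecture is right, but the three lemmas that constitute the actual content of the paper's proof are missing.
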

\begin{proof}
\emph{Step 1: $u_{t}\in\mathcal{C}^{\infty}(X).$} First note that
by Theorem \ref{thm:berman-dem} $u_{t}\in\mathcal{C}_{\C}^{1,1}(X).$
Moreover, as shown in the beginning of section \ref{sub:Proof-of-Theorem-main}
it follows under the assumptions above that, for any $t,$ the function
$u_{t}$ satisfies the Euler-Lagrange equations \ref{eq:e-l for f}
on $X.$ Hence, just as in section \ref{sub:Proof-of-Theorem-main}
Blocki's complex version of the regularity result of Trudinger, now
applied to local patches of $\{t\}\times X$ immediately gives that
$u_{t}\in\mathcal{C}^{\infty}(X)$ (when $n=1$ this follows from
basic linear elliptic theory).

\emph{Step 2: $\Delta_{X}v_{t}\in L^{\infty}(X)$ uniformly wrt $t.$}
Differentiating the Euler-Lagrange equation wrt $t$ from the right
gives \begin{equation}
ndd^{c}v_{t}\wedge(\omega_{t})^{n-1}=\frac{d\beta_{u_{t}}(x)}{dt}_{+}=:R[v_{t}],\label{eq:linearized euler-l}\end{equation}
in the sense of currents. Of course, this would follow immediately
from the chain rule if $u_{t}$ were smooth in $(t,x).$ In the present
case it is proved in lemma \ref{lem:deriv of e-l} below. Moreover,
lemma \ref{lem:deriv of beta} in the appendix implies the bound \begin{equation}
\left\Vert R[v_{t}]/(\omega_{0})^{n}\right\Vert _{L^{\infty}(X)}\leq C\left\Vert v_{t}\right\Vert _{L^{\infty}(X)}\label{eq:ineq for r}\end{equation}
To see this, just note that \[
R[v]\leq2\left\Vert v\right\Vert _{L^{\infty}(X)}\int_{X}\left|K(x,y)\right|^{2}e^{-(\psi(x)+\psi(y))}=2\left\Vert v\right\Vert _{L^{\infty}(X)}\beta_{u},\]
 using the well-known {}``reproducing property'' of the Bergman
kernel (formula \ref{eq:integr out} in the appendix). By formula
\ref{eq:bounds on beta} in the appendix this proves the inequality
\ref{eq:ineq for r}.

Now, since $\omega_{t}>\delta\omega_{0},$ formula \ref{eq:linearized euler-l}
gives that the distribution $\Delta_{\omega_{t}}v_{t},$ where $\Delta_{\omega_{t}}$
is the Laplacian on $X$ wrt the metric $\omega_{t}:=\omega_{u_{t}},$
is in $L^{\infty}(X)$ uniformly wrt $t$ and \[
\left\Vert \Delta_{\omega_{t}}v_{t}\right\Vert _{L^{\infty}(X)}\leq C\left\Vert v_{t}\right\Vert _{L^{\infty}(X)}\leq C',\]
 by lemma \ref{lem:v is bded}. 

\emph{Step 3: $\Delta_{M}u\in W^{1,p}(M)$ for any $p\geq1.$} First
observe that by step 1 \begin{equation}
\partial_{z}(\partial_{z_{i}}\partial_{\bar{z}_{j}}u)\in L^{\infty}(X),\label{eq:zzz deriv}\end{equation}
 uniformly wrt $t.$ Also note that \begin{equation}
\partial_{t}(\partial_{z_{i}}\partial_{\bar{z}_{j}}u)\in L^{\infty}(X),\label{eq:tzz}\end{equation}
uniformly wrt $t.$ Indeed, $\partial_{t}(\partial_{z_{i}}\partial_{\bar{z}_{j}}u)=\partial_{z_{i}}(\partial_{\bar{z_{j}}}\partial_{t}u)=(\partial_{z_{i}}\partial_{\bar{z}_{j}})v_{t}\in L^{p}(X),$
uniformly wrt $t$\textbf{,} for any $p>1,$ by step 2 and local elliptic
estimates for $\Delta_{X}.$ Next, we will use that the following
identity proved in lemma \ref{lem:normal-normal} below:\begin{equation}
\partial_{t}\partial_{\bar{t}}u=\left|V_{t}\right|_{\omega_{t}}^{2}=\left|\partial_{\bar{z}}v_{t}\right|_{\omega_{t}}^{2},\label{eq:ma eq in prop unique}\end{equation}
 where $\left|V_{t}\right|_{\omega_{t}}^{2}$ denotes the point-wise
norm of $V_{t}$ wrt the metric $\omega_{t}$ (where we have used
that $\omega_{t}>0)$. First we have \begin{equation}
\partial_{z}(\partial_{t}\partial_{\bar{t}}u)=\partial_{z}\left|\partial_{\bar{z}}v_{t}\right|_{\omega_{t}}^{2}\in L^{p}(X),\label{eq:ztt}\end{equation}
 uniformly wrt $t,$ for any $p>1$ using Step 1 and Step 2 combined
with local elliptic estimates on $X$ for $\Delta_{X}.$ Next, \begin{equation}
\partial_{t}(\partial_{t}\partial_{\bar{t}}u)\in L^{p}(X),\label{eq:trippel deriv wrt t}\end{equation}
 uniformly wrt $t.$ Indeed, $\partial_{t}(\partial_{t}\partial_{\bar{t}}u)=\partial_{t}\left|\partial_{\bar{z}}\partial_{t}u\right|_{\omega_{t}}^{2}$
and since locally $\partial_{t}\omega_{t}=\partial_{t}(\partial_{z}\partial_{\bar{z}}u)$
\ref{eq:trippel deriv wrt t} follows from \ref{eq:tzz} and \ref{eq:ztt}
combined with Leibniz product rule. All in all this proves Step 3. 

Now by Step 3 and elliptic estimates for the Laplacian we have $u\in W^{3,p}(M).$
In particular, $u$ is locally in $\mathcal{C}^{2}(M).$ As a consequence
the proof of Theorem 2.6 in \cite{bern2} immediately gives that $V_{t}$
is a holomorphic vector field on $X$ for any $t.$ Finally, we will
recall a slight variant of the argument in \cite{bern2} which shows
that $\partial_{\bar{t}}V_{t}=0$ for $V_{t}$ seen as a distribution
on the interiour of $M.$ To simplify the notation we assume that
$n=1,$ but modulo the change to matrix notation the case $n>1$ is
the same. First we write \ref{eq:int multip} in the form \begin{equation}
\omega V=\partial_{\bar{z}}\partial_{t}u,\label{eq:int mult local}\end{equation}
where we have identified $V$ and $\omega$ with elements in $L^{p}(M)$
for $p>>1.$ By Leibniz rule \[
\partial_{\bar{t}}(V\omega)=(\partial_{\bar{t}}V)\omega+V(\partial_{\bar{t}}\omega)\]
Next, observe that \[
\partial_{\bar{t}}\omega=\partial_{\bar{t}}(\partial_{z}\partial_{\bar{z}}u)=\partial_{\bar{z}}(\partial_{\bar{t}}\partial_{z}u)=\partial_{\bar{z}}(\omega\bar{V}),\]
 using \ref{eq:int mult local} in the last step. Hence, since, as
shown above, $\partial_{\bar{z}}V=0,$ the two previous equations
together give \[
\partial_{\bar{t}}(V\omega)=(\partial_{\bar{t}}V)\omega+\partial_{\bar{z}}(V\omega\bar{V})=\partial_{\bar{z}}(\partial_{\bar{t}}\partial_{t}u),\]
 also using \ref{eq:int mult local} in the last step and commuting
$\partial_{\bar{z}}$ and $\partial_{\bar{t}}.$ Since, $V\omega\bar{V}=\left|V\right|_{\omega}^{2}$
it follows by \ref{eq:ma eq in prop unique} that $(\partial_{\bar{t}}V)\omega=0.$
But since, $\omega>0$ and $(\partial_{\bar{t}}V)$ is in $L^{p}(M)$
for all $p>1$ this forces $(\partial_{\bar{t}}V)=0$ a.e. on $M.$
In particular, $(\partial_{\bar{t}}V)=0$ as a distribution on $M.$
Hence, it follows that the distribution $V_{t}$ is in the null-space
of the $\bar{\partial}-$operator on $M.$ By local elliptic theory
it follows that $V_{t}$ is smooth and hence holomorphic in the interiour
of $M.$ Finally, the automorphism $S_{1}$ is obtained precisely
as in the end of section \ref{sub:Proof-of-Theorem-main}.\end{proof}
\begin{lem}
\emph{\label{lem:deriv of e-l}Under the assumptions in the previous
proposition the following holds:} \[
\frac{d}{dt}_{+}\int_{X}(\omega_{t})^{n}f=\int_{X}nv_{t}\wedge(\omega_{t})^{n-1}\wedge dd^{c}f,\]
 where $f$ is a given smooth function on $X.$ \end{lem}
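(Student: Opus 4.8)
The plan is to obtain the right derivative from the difference quotient, transfer the two $dd^{c}$'s onto the fixed test function $f$ by integration by parts, and then pass to the limit. Fix $t$ and, for $s>0$, use the elementary telescoping identity for the commuting closed $(1,1)$-forms $\omega_{t+s}$ and $\omega_{t}$,
\[
(\omega_{t+s})^{n}-(\omega_{t})^{n}=dd^{c}(u_{t+s}-u_{t})\wedge\Theta_{s},\qquad\Theta_{s}:=\sum_{j=0}^{n-1}(\omega_{t+s})^{j}\wedge(\omega_{t})^{n-1-j},
\]
which holds because $\omega_{t+s}-\omega_{t}=dd^{c}(u_{t+s}-u_{t})$. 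By Step~1 of the proof of Proposition \ref{pro:unique alt} (equivalently, by the smoothness established in section \ref{sub:Proof-of-Theorem-main}) the functions $u_{t+s}$ and $u_{t}$ are smooth on $X$ for each fixed time, so $\Theta_{s}$ is a smooth \emph{closed} form and I may integrate by parts twice on the closed manifold $X$, with no boundary terms, to get
\[
\frac{1}{s}\int_{X}f\big[(\omega_{t+s})^{n}-(\omega_{t})^{n}\big]=\int_{X}\phi_{s}\,dd^{c}f\wedge\Theta_{s},\qquad\phi_{s}:=\frac{u_{t+s}-u_{t}}{s}.
\]

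It remains to let $s\to0^{+}$ on the right. Since $u_{t}$ is convex in $t$, the difference quotients $\phi_{s}$ decrease pointwise to $v_{t}$ as $s\downarrow0$, and by Lemma \ref{lem:v is bded} and the bounds in its proof they are uniformly bounded, so $\phi_{s}\to v_{t}$ in $L^{1}(\omega_{0}^{n})$ by dominated convergence. Writing $dd^{c}f\wedge\Theta_{s}=\rho_{s}\,\omega_{0}^{n}$, the two-sided bound \ref{eq:bound on omega t} shows that the densities $\rho_{s}$ are bounded uniformly in $s$; moreover $u_{t+s}\to u_{t}$ uniformly on $X$, so Bedford--Taylor's continuity theorem for mixed Monge--Amp\`ere currents (used already in the proof of Proposition \ref{pro:e is affine}) gives $dd^{c}f\wedge\Theta_{s}\to n\,(\omega_{t})^{n-1}\wedge dd^{c}f$ weakly as measures. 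Combined with the uniform density bound, this upgrades to weak-$*$ convergence of $\rho_{s}$ in $L^{\infty}(\omega_{0}^{n})$. Splitting
\[
\int_{X}\phi_{s}\,\rho_{s}\,\omega_{0}^{n}=\int_{X}(\phi_{s}-v_{t})\,\rho_{s}\,\omega_{0}^{n}+\int_{X}v_{t}\,\rho_{s}\,\omega_{0}^{n},
\]
the first term is at most $C\|\phi_{s}-v_{t}\|_{L^{1}(\omega_{0}^{n})}\to0$, while the second converges to $\int_{X}v_{t}\,n(\omega_{t})^{n-1}\wedge dd^{c}f$ by weak-$*$ convergence tested against the fixed $L^{1}$ function $v_{t}$. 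This is exactly the asserted formula.

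The crux is precisely this passage to the limit: the low joint regularity in $(t,x)$ forbids a naive chain-rule differentiation under the integral sign, forcing one to pair the merely bounded, a.e.-convergent difference quotient $\phi_{s}$ against the Monge--Amp\`ere currents $\Theta_{s}$, which converge only weakly. The resolution hinges on the uniform $L^{\infty}$ bound on the densities $\rho_{s}$ furnished by \ref{eq:bound on omega t}, which is what allows the weak convergence of $dd^{c}f\wedge\Theta_{s}$ to be tested against the $L^{1}$ limit $v_{t}$ rather than only against continuous functions. It is worth noting that in the case $n=1$ relevant to $\P^{1}$ the difficulty evaporates, since then $\Theta_{s}\equiv1$ and the statement reduces to ordinary dominated convergence for $\int_{X}\phi_{s}\,dd^{c}f$ against the fixed smooth measure $dd^{c}f$.
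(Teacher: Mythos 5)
Your argument is correct, but at the decisive step it takes a genuinely different route from the paper's. Both proofs open the same way: the telescoping identity for $(\omega_{t+s})^{n}-(\omega_{t})^{n}$ followed by two integrations by parts onto $f$, reducing the lemma to the convergence of $\int_{X}\phi_{s}\,dd^{c}f\wedge\Theta_{s}$. The paper then splits $\Theta_{s}$ into the fixed part $n(\omega_{t})^{n-1}$ (handled by monotone convergence) plus an error term, and reduces everything to the claim $\int d(u_{t+s}-u_{t})\wedge d^{c}(u_{t+s}-u_{t})\wedge\omega_{0}=o(s)$, which it deduces from the \emph{variational} structure of the situation: the affineness of $\mathcal{E}(u_{t})$ and $\mathcal{L}(u_{t})$ along the geodesic, the criticality of $u_{0}$ via the Euler--Lagrange equation, and the identity expressing $\mathcal{E}(u_{t})-\mathcal{E}(u_{0})-\int(u_{t}-u_{0})\omega_{0}^{n}/n!$ as minus Aubin's non-negative $J$-functional. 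You instead keep $\Theta_{s}$ intact and argue purely measure-theoretically: monotone, uniformly bounded difference quotients converging in $L^{1}$, paired against measures of uniformly bounded density converging weakly, with the uniform density bound upgrading weak convergence to weak-$*$ convergence in $L^{\infty}$ so that the limit can be tested against the merely bounded, upper semi-continuous $v_{t}$. Each step of this checks out (only the upper bound $\omega_{u_{t}}\leq C\omega_{0}$ is needed for the density estimate, and in the setting of Proposition \ref{pro:unique alt} you should quote it from Theorem \ref{thm:berman-dem} rather than from \ref{eq:bound on omega t}, which is derived in section \ref{sub:Proof-of-Theorem-main}). What your route buys is independence from the hypotheses that $u_{0}$ is critical and that the functionals are affine -- the differentiation formula holds for any geodesic with this regularity -- at the price of invoking Bedford--Taylor continuity for the mixed products and the uniform Laplacian bound; the paper's route trades these for the global variational identities, which are in any case available in the only context where the lemma is applied.
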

\begin{proof}
To simplify the notation we assume that $n=2$ and $t=0,$ but the
general argument is completely similar (compare \cite{bbgz}). Expanding
and using that (by convexity) $(u_{t}-u_{0})/t$ decreases point-wise
to $u_{0},$ shows that it is equivalent to prove \[
\int(u_{t}-u_{0})(dd^{c}(u_{t}-\phi_{0}))\wedge dd^{c}f=o(t).\]
By partial integration the l.h.s is equal to \[
-\int d(u_{t}-u_{0})\wedge d^{c}(u_{t}-u_{0})\wedge dd^{c}f\]
Taking absolute values and using that $d(u_{t}-u_{0})\wedge d^{c}(u_{t}-u_{0})\geq0$
point-wise shows in turn that it is enough to prove the following\begin{equation}
\mbox{Claim:\ensuremath{\,\,}}\int d(u_{t}-u_{0})\wedge d^{c}(u_{t}-u_{0})\wedge\omega_{0}=o(t)\label{eq:claim in pf of diff lemma}\end{equation}
To this end first observe that \begin{equation}
\frac{d}{dt}_{t=0+}\mathcal{E}(u_{t})=\int_{X}v_{0}(\omega_{0})^{n}/(n!V)\label{eq:deriv of e along bad u}\end{equation}
To see this, note that since we have already shown that $\mathcal{E}(u_{t}),$
\emph{$\mathcal{F}(u_{t})(=\mathcal{E}(u_{t})-\mathcal{L}(u_{t}))$
and $\mathcal{L}(u_{t})$} are all affine (and even constant) \begin{equation}
0=\frac{d}{dt}_{t=0+}\mathcal{E}(u_{t})=\frac{d}{dt}_{t=0+}\mathcal{L}(u_{t})=\int_{X}v_{0}\beta_{u_{t}},\label{eq:deriv of e vanish}\end{equation}
 using formula \ref{eq:deriv of l} in the last step (and dominated
convergence for the sequence $(u_{t}-u_{0})/t$ converging to $v_{0}).$
Since, $u_{t}$ satisfies the Euler-Lagrange equations \ref{eq:e-l for f}
this proves formula \ref{eq:deriv of e along bad u}.
\end{proof}
Next, we will use the following well-known general identity (see \cite{bbgz}
or page 58-59 in \cite{ti}): \[
\mathcal{E}(u_{t})-\mathcal{E}(u_{0})-\int(u_{t}-u_{0})(\omega_{0})^{n}/n!=-J_{\omega_{0}}(\phi_{t}),\]
 in terms of the non-negative functional \[
J_{\omega_{0}}(u_{t})=c_{1}\int d(u_{t}-u_{0})\wedge d^{c}(u_{t}-u_{0})\wedge\omega_{0}+c_{2}\int d(u_{t}-u_{0})\wedge d^{c}(u_{t}-u_{0})\wedge\omega_{t}\]
where $c_{i}>0$ (compare formula \ref{eq:def of j}). But by \ref{eq:deriv of e vanish}
and the identity above\[
\frac{d}{dt}_{t=0+}J_{\omega_{0}}(\phi_{t})=0,\]
which by positivity implies the claim in formula \ref{eq:claim in pf of diff lemma}
and hence finishes the proof of the lemma.

In the previous proof we also used the following
\begin{lem}
\emph{\label{lem:normal-normal}Under the assumptions in the previous
proposition the following holds: $\partial_{t}\partial_{\bar{t}}u\in L^{\infty}(X)$}
\textup{uniformly in $t$} \emph{and \[
\partial_{t}\partial_{\bar{t}}u=\left|\bar{\partial}_{X}\partial_{t}u\right|_{\omega_{u_{t}}}^{2}.\]
}\end{lem}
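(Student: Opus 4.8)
The plan is to read the asserted equality as the pointwise expression of the fact that $U$ solves the \emph{homogeneous} Monge--Amp\`ere equation \ref{eq:dirichlet pr}, i.e. that the full complex Hessian of $U$ on $M$ is degenerate, and then to obtain the $L^{\infty}$-bound as a \emph{consequence} of the identity rather than proving it independently. Fixing a local holomorphic chart with coordinates $(t,z_{1},\dots ,z_{n})$ on the interior $\dot M$, and recalling that $U$ depends only on $s:=\mathrm{Re}\,t$ (so that $\partial_{t}\partial_{\bar t}u=\tfrac14\,\partial_{s}^{2}u$ is, by convexity in $t$, an a priori nonnegative measure), I would split the complex Hessian of $U+\pi_{X}^{*}\psi_{0}$ into blocks
\[
\begin{pmatrix} A & B\\ B^{*} & C\end{pmatrix},\qquad A=\partial_{t}\partial_{\bar t}u,\quad B=(\partial_{t}\partial_{\bar z_{j}}u)_{j},\quad C=(\omega_{u_{t}})_{i\bar j}.
\]
Here the $X$-block $C=\omega_{u_{t}}$ is, for each fixed $t$, smooth (Step 1) and, by the Euler--Lagrange equation \ref{eq:e-l for f} together with Theorem \ref{thm:berman-dem}, uniformly two-sided bounded, so that $C^{-1}$ exists and is uniformly bounded in $t$.

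Next I would establish that the only ingredient of $BC^{-1}B^{*}=|\bar\partial_{X}\partial_{t}u|^{2}_{\omega_{u_{t}}}$ requiring regularity control, namely the mixed block $B$, is in fact continuous and uniformly bounded. Since $\partial_{t}u$ agrees up to a fixed constant with the tangent vector $v_{t}$, and since by Step 2 one has $\Delta_{X}v_{t}\in L^{\infty}(X)$ uniformly in $t$ while $v_{t}$ is bounded by Lemma \ref{lem:v is bded}, local elliptic estimates for $\Delta_{X}$ give $v_{t}\in W^{2,p}(X)$ for every $p<\infty$, hence $v_{t}\in\mathcal{C}^{1,\alpha}(X)$, uniformly in $t$. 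Consequently $B$ is continuous and uniformly bounded, so $|\bar\partial_{X}\partial_{t}u|^{2}_{\omega_{u_{t}}}=BC^{-1}B^{*}$ is a bounded continuous function with a bound independent of $t$.

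I would then expand the degenerate equation \ref{eq:dirichlet pr}. Since $\omega_{u_{t}}$ involves only the $X$-variables one has $\omega_{u_{t}}^{n+1}=0$, so the only contributions of top degree on $M$ arise either from one factor of the $tt$-entry wedged with $\omega_{u_{t}}^{n}$, or from one $t\bar z$- and one $z\bar t$-factor wedged with $\omega_{u_{t}}^{n-1}$; collecting them is exactly the Schur-complement expansion of the block determinant and yields, as an identity of measures,
\[
c\,\big(\partial_{t}\partial_{\bar t}u-|\bar\partial_{X}\partial_{t}u|^{2}_{\omega_{u_{t}}}\big)\,\omega_{u_{t}}^{n}\wedge i\,dt\wedge d\bar t=0
\]
for a positive constant $c$. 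As $\omega_{u_{t}}^{n}$ is a smooth strictly positive volume form on $X$, the positive measure $\partial_{t}\partial_{\bar t}u$ must coincide with the bounded continuous function $|\bar\partial_{X}\partial_{t}u|^{2}_{\omega_{u_{t}}}$; this is the asserted identity \ref{eq:ma eq in prop unique}, and in particular it forces $\partial_{t}\partial_{\bar t}u\in L^{\infty}(X)$ uniformly in $t$.

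The genuinely delicate point — and the step I expect to be the main obstacle — is the termwise expansion of the Bedford--Taylor product $(dd^{c}U+\pi_{X}^{*}\omega_{0})^{n+1}$ when the $tt$-entry $A$ is a priori only a measure: one must verify that the mixed term $(\partial_{t}\partial_{\bar z}u)\wedge(\partial_{z}\partial_{\bar t}u)\wedge\omega_{u_{t}}^{n-1}$ is unambiguously defined (which it is, $B$ being continuous and $\omega_{u_{t}}$ smooth) and that no mass is lost in the $tt$-term. I would settle this by regularizing $U$ in the $s$-variable alone, convolving in $s$ with a mollifier: this preserves both the fiberwise positivity and smoothness of $\omega_{u_{t}}$ on $X$ and, by convexity in $s$, approximates $U$ monotonically and uniformly. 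The elementary smooth block-determinant identity applies to each regularization, and letting the regularization parameter tend to zero, the continuity of the Bedford--Taylor operator under decreasing uniform limits lets me pass to the limit in both sides and recover the displayed identity. Everything else is the linear-algebra Schur relation together with the fiberwise regularity already in hand.
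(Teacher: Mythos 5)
Your argument is correct and follows essentially the same route as the paper: both read the lemma as the Schur-complement expansion of the degenerate Monge--Amp\`ere equation, justify that expansion by mollifying $U$ and applying the smooth identity (the paper's formula \ref{eq:expanding monge as c}), and then pass to the weak limit using the uniform bound on the mixed block $\bar{\partial}_{X}\partial_{t}u$ from Step 2 together with the two-sided bound on $\omega_{u_{t}}$. The only difference is that you convolve in $\mathrm{Re}\, t$ alone, whereas the paper convolves in all variables and then identifies the limit of the $t\bar{t}$-term by an integration by parts in $t$ against a test function; your variant is a harmless, arguably slightly cleaner, simplification of the same limiting argument.
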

\begin{proof}
By assumption the Monge-Ampère measure $(dd^{c}U+\pi_{X}^{*}\omega_{0})^{n+1}$
vanishes on $M.$ Moreover, by Step 1 in the proposition above $\Delta_{X}u_{t}\in C^{\infty}(X)$
for any $t$ with bounds on the Sobolev norms which are uniform wrt
$t.$ Combining this latter fact with lemma \ref{lem:v is bded} gives
that $U$ is Lipschitz on $M.$ Finally, as shown in Step 2 in the
proof of proposition above $\Delta_{X}\partial_{t}u_{t}\in L^{\infty}(X)$
uniformly wrt $t.$ We will next show that these properties are enough
to prove the lemma. As the statement is local we may as well consider
the restriction of $u:=U$ to an open set biholomorphic to a domain
in $\C^{n+1}=\C_{t}\times\C_{z}^{n}.$ Denote by $u^{\epsilon}$ the
local smooth function obtain as the convolution of $u$ with a fixed
local compactly supported smooth family of approximations of the identity.
Expanding gives \begin{equation}
(dd^{c}U+\pi_{X}^{*}\omega_{0})^{n+1}=(\partial_{t}\partial_{\bar{t}}u^{\epsilon}-\left|\partial_{\bar{z}}\partial_{t}u^{\epsilon}\right|_{\omega_{u^{\epsilon}}}^{2})(\omega_{u^{\epsilon}})^{n}\wedge dt\wedge d\bar{t}.\label{eq:expanding monge as c}\end{equation}
 Now since, by assumption, $\left|\partial_{\bar{z}}\partial_{t}u^{\epsilon}\right|_{\omega_{u^{\epsilon}}}^{2}\leq C$
the second term tends to $\left|\partial_{\bar{z}}\partial_{t}u\right|_{\omega_{u}}^{2})(\omega_{u})^{n}\wedge dt\wedge d\bar{t}$
weakly when $\epsilon\rightarrow0.$ Moreover, by assumption $u^{\epsilon}\rightarrow u$
uniformly locally and since the Monge-Ampère operator is continous,
as a measure, under uniform limits of psh functions \cite{de3} it
will now be enough to prove that \begin{equation}
(\partial_{t}\partial_{\bar{t}}u^{\epsilon})(\omega_{u^{\epsilon}})^{n}\wedge dt\wedge d\bar{t}\rightarrow(\partial_{t}\partial_{\bar{t}}u)(\omega_{u})^{n}\wedge dt\wedge d\bar{t}\label{eq:weak conv in pf lemma n-n}\end{equation}
 weakly, where the right hand sice is well-defined since $\partial_{t}\partial_{\bar{t}}u_{t}$
defines a positive measure on $\C^{n+1}$ and $(\omega_{u_{t}})^{n}/\omega_{0}^{n}$
is continous on $\C^{n+1}.$ To this end fix a test funtion $f$ i.e.
a smooth and compactly supported function on $\C^{n+1}.$ Then, with
$\int$ denoting the integral over $\C^{n+1},$ \[
\int f(\omega_{u^{\epsilon}})^{n}(\partial_{t}\partial_{\bar{t}}u^{\epsilon})\wedge dt\wedge d\bar{t}=:\int g_{\epsilon}(\partial_{t}\partial_{\bar{t}}u^{\epsilon})=-\int(\partial_{t}g_{\epsilon})(\partial_{\bar{t}}u^{\epsilon})\]
By assumption $(\partial_{t}g_{\epsilon})$ and $(\partial_{\bar{t}}u^{\epsilon})$
tend to $(\partial_{\bar{t}}u)$ and $(\partial_{\bar{t}}u),$ respectively
in $L^{p}(X)$ for any $p>1,$ uniformly wrt $t$ (more precisily
by the assumption on $\Delta_{X}u_{t}$ and the fact that $u$ is
Lipschitz). Hence, by Hölders's inequality \[
\int g_{\epsilon}(\partial_{t}\partial_{\bar{t}}u^{\epsilon})\rightarrow-\int(\partial_{t}g)(\partial_{\bar{t}}u).\]
 Finally, since $(\partial_{t}g)\in L^{\infty}(X)$ uniformly wrt
$t$ (by the assumption on $\Delta_{X}u_{t}$) and since $\partial_{t}\partial_{\bar{t}}u$
defines a positive measure, Leibniz rule combined with the dominated
convergence theorem gives (by a simple argument using a regularization
of $g)$ \[
-\int(\partial_{t}g)(\partial_{\bar{t}}u)=\int g(\partial_{t}\partial_{\bar{t}}u)\]
This proves \ref{eq:weak conv in pf lemma n-n} and hence finishes
the proof of the lemma. 
\end{proof}

\section{Applicatio\label{sec:Application-to-determinantal}n to $SU(2)-$invariant
determinantal point processes}

\emph{A random point process with $N$ particles} on a space $X$
wrt to a \emph{background measure} $\mu$ on $X,$ may be definied
as an ensemble of the form $(X^{N},\gamma_{N}),$ where \[
\gamma_{N}=\rho_{N}(x_{1},...,x_{N})d\mu^{\otimes N}\]
 and where the density $\rho_{N}(x_{1},...,x_{N})$ of the probability
measure $\gamma_{N}$ is assumed invariant under the action of the
symmetric group $S_{N},$ i.e. under permutations of the $x_{i}:$s.
The $N$-fold product $X^{N}$ is called the\emph{ $N-$particle configuration
space. }The random point process $(X^{N},\gamma_{N})$ determines
the random measure \begin{equation}
(x_{1},...,x_{N})\mapsto\sum_{i=1}^{N}\delta_{x_{i}},\label{eq:intro random measure}\end{equation}
 (i.e. a measure valued random variable) often called the \emph{empirical}
\emph{measure. }Given a, say continuous, function $u$ on $X$ one
defines the corresponding\emph{ linear statistic }as the random variable
obtained by contraction with the empirical measure: \begin{equation}
(x_{1},...,x_{N})\mapsto\sum_{i=1}^{N}u(x_{i})\label{eq:linear stat}\end{equation}
Using standard probability notation we will write $\E_{N}(Y):=\int Yd\gamma_{N}$
for the \emph{expectation} of a random variable $Y$ on $X^{N}$ and
its \emph{fluctuation} $\tilde{Y}$ is then the centered random variable
$\tilde{Y}-\E_{N}(Y).$ We also write $\mbox{Prob}{}_{N}(A):=\int_{A}\gamma_{N}.$

A special class of random point processes are given by the \emph{determinantal}
ones \cite{h-k-p}, which exhibit repulsion. . These have been mainly
studied when the background measure $\mu$ supported on $\C;$ notably
in the context of random matrix theory (cf. \cite{dia}). A general
complex geometric framework for determinatal random point processes
was introduced in \cite{be2}. Given a line bundle $L\rightarrow X$
over a compact complex manifold $X,$ a background measure $\mu$
and a weight $\psi_{0}$ of a metric on $L,$ the corresponding point
process is obtained by setting \[
\rho_{N}(x_{1},...,x_{N}):=\left|\det_{1\leq i,j\leq N}(s_{i}(x_{i}))_{i,j}\right|^{2}e^{-\psi_{0}(x_{1})}...e^{-\psi_{0}(x_{N})}/Z\]
in terms of any base $S=(s_{i})$ for the Hilbert space $H^{0}(X,L)$
equipped with the scalar product induced by $(\psi_{0},\mu).$ The
number $Z$ (called the \emph{partition function}) is the normalizing
constant ensuring that $\gamma_{N}$ is a probability measure on $X^{N}.$
Even if $Z$ does depend on the base $S$ the density $\rho_{N}$
does not. In the {}``adjoint'' setting considered in the present
paper where $L$ is replaced by $L+K_{X}$, there is no need to specify
the background measure $\mu_{0}$ (equivalenly, $\mu_{0}$ is taken
as any smooth volume form on $X$ which induces an Hermitian metric
on $K_{X}$ in such a way that the density $\rho_{N}$ is independent
of $\mu_{0}).$ 

The bridge between the point above processes and the subject of the
present paper is furnished by a formula which is a simple variant
of a well-known formula of Heine and Szegö in the theory of orthogonal
polynomials: \begin{equation}
\E_{N}(e^{-(u(x_{1})+...+u(x_{n})})=\det_{1\leq i,j\leq N}\left\langle s_{i},s_{j}\right\rangle _{(\psi_{0}+u,\mu)}\mbox{\emph{ }},\label{eq:expect is det}\end{equation}
i.e.$-\log\E_{N}(e^{-(u(x_{1})+...+u(x_{n})})=\mathcal{L}_{\omega_{0}}(u)$
in the notation of the previous sections  \cite{be2}. In fact, the
formula above is a simple consequence of the following identity \[
\int_{X^{N}}\left|\det_{1\leq i,j\leq N}(s_{i}(x_{i}))_{i,j}\right|^{2}e^{-\psi_{0}(x_{1})}...e^{-\psi_{0}(x_{N})}\mu=N!,\]
for $(s_{i})$ an orthonormal wrt the Hermitian products induced by
$(\psi_{0},\mu).$ Note that in the probability litterature $\E(e^{tY})$
is called the\emph{ moment generating function }of a given random
variable $Y.$\emph{ }

\subsection{The case of the two-sphere}

Let now $X$ be the two-sphere $S^{2}$ embedded as the unit-sphere
in Euclidian $\R^{3}$ and set \begin{equation}
\rho_{N}(x_{1},...,x_{N}):=\Pi_{1\leq i<j\leq N}\left\Vert x_{i}-x_{j}\right\Vert ^{2}/Z_{N}\label{eq:density on sphere}\end{equation}
written in terms of the ambient Euclidian norm in $\R^{3},$ where
$Z_{N}$ is the normalizing constant ensuring that $\gamma_{N}$ is
a probability measure on $X^{N}$ {[}in fact $1/Z_{N}=N^{N}\binom{N-1}{0}...\binom{N-1}{N-1}/N!]$
The background measure is taken as the induced volume (or rather area)
form $\omega_{0}$ on $S^{2}$ normalized to give unit volume to $S^{2}.$
Note that formula \ref{eq:distance}below shows that $g(x,y):=\log\left\Vert x-y\right\Vert ^{2}$
is the \emph{Green function} for $(S^{2},\omega_{0})$ (compare section
2.1 in \cite{be2b} and section 4 in \cite{z-z})

This random point process has two crucial properties: $i)$ it is
invariant under the isometry group of $S^{2}$ and $ii)$ it is\emph{
determinantal.}

In the physics litterature the ensemble above appears as the Gibbs
ensemble of a \emph{Coulomb gas} of unit-charge particles (i.e one
component plasma) confined to the sphere \cite{ca}. An interesting\emph{
random matrix} realiziation of this process was found very recently
in \cite{kr} (compare remark \ref{rem:random matrix on p1} below). 

In this probabilistic frame work Corollary \ref{cor:moser} may now
be formulated as the following {}``multi-particle Moser-Trudinger
inequality'' on $S^{2}$ (which is sharper then the one conjectured
in section 5 in \cite{f}). 
\begin{thm}
The following upper bound on the moment generating function of the
fluctuation of a linear statistic in the point process \ref{eq:density on sphere}
with $N-$particles on $S^{2}$ holds\begin{equation}
\log\E_{N}(e^{t(\widetilde{u(x_{1})}+\widetilde{u(x_{2})}+...+\widetilde{u(x_{N})})})\leq\frac{N}{N+1}\frac{t^{2}}{2}\left\Vert du\right\Vert ^{2}\label{eq:ineq in multi particle}\end{equation}
for any $t\in\R$ with equality iff $\omega_{0}-tdd^{c}u$ is the
pull-back of $\omega_{0}$ under a conformal transformation of $S^{2}.$\end{thm}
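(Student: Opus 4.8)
The plan is to recognize this theorem as Corollary \ref{cor:moser} rewritten through the Heine--Szeg\"o identity \ref{eq:expect is det}, with a centering that makes the terms linear in $u$ cancel. For the $SU(2)$-invariant process \ref{eq:density on sphere} on $S^{2}$ one has $N=m+1$, where $m$ is the degree appearing in Corollary \ref{cor:moser}, and \ref{eq:expect is det} reads $\log\E_{N}(e^{-\sum_{i}u(x_{i})})=-\mathcal{L}_{m}(u)$ with the (unnormalized) convention for $\mathcal{L}_{m}$ of the preceding sections.

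First I would apply this identity with $u$ replaced by $-tu$, giving
\[
\log\E_{N}(e^{t\sum_{i}u(x_{i})})=-\mathcal{L}_{m}(-tu).
\]
Then I would invoke Corollary \ref{cor:moser} for the function $-tu$, which bounds the right-hand side by $tN\int_{S^{2}}u\,\omega_{0}+\frac{N}{N+1}\frac{t^{2}}{2}\int_{S^{2}}du\wedge d^{c}u$; here I use $\omega_{-tu}=\omega_{0}-t\,dd^{c}u$, the quadratic scaling of the gradient term in $t$, and the identifications $m+1=N$, $m+2=N+1$, together with $\|du\|^{2}=\int_{S^{2}}du\wedge d^{c}u$.

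Next I would compute the expected linear statistic. By $SU(2)$-invariance the expected empirical measure $\E_{N}[\sum_{i}\delta_{x_{i}}]$ is invariant and of total mass $N$, hence equals $N\omega_{0}$ (this is the homogeneity observation $\beta_{0}=\omega_{0}$ used in the proof of Corollary \ref{cor:homeg}). Thus $\E_{N}(\sum_{i}u(x_{i}))=N\int_{S^{2}}u\,\omega_{0}$, and passing to the fluctuation $\widetilde{u(x_{1})}+\dots+\widetilde{u(x_{N})}$ by subtracting this expectation I obtain
\[
\log\E_{N}\!\big(e^{t(\widetilde{u(x_{1})}+\dots+\widetilde{u(x_{N})})}\big)=-\mathcal{L}_{m}(-tu)-tN\int_{S^{2}}u\,\omega_{0}\le\frac{N}{N+1}\frac{t^{2}}{2}\|du\|^{2},
\]
the linear terms cancelling exactly. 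This is the asserted inequality.

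For the equality clause I would trace back through the two relations, both of which are identities, so equality holds precisely when Corollary \ref{cor:moser} is saturated at $-tu$; by the equality condition there this means $\omega_{-tu}=M^{*}\omega_{0}$ for some conformal (M\"obius) transformation $M$ of $S^{2}$, i.e. $\omega_{0}-t\,dd^{c}u=M^{*}\omega_{0}$, which is exactly the stated condition. No genuinely hard analytic step is involved: the whole content is already carried by Corollary \ref{cor:moser}. The only points demanding care are the bookkeeping of normalizations---the passage $N=m+1$, the unnormalized convention for $\mathcal{L}_{m}$ in \ref{eq:expect is det}, and the verification that the one-point intensity of the background process is exactly $N\omega_{0}$, so that the centering kills the first-order term and isolates the sharp constant $\frac{N}{N+1}$.
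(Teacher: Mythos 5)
Your proposal is correct and follows essentially the same route as the paper: rewrite the moment generating function via the Heine--Szeg\"{o} identity, apply Corollary \ref{cor:moser} to $-tu$, and use the $SU(2)$-invariance of the one-point intensity to show it equals $N\omega_{0}$ so that the centering cancels the linear term. The only step you take for granted that the paper spells out is the identification of the density \ref{eq:density on sphere} with the determinantal process for polynomials of degree at most $N-1$, which rests on the identity $\left\Vert x_{i}-x_{j}\right\Vert ^{2}=\left|z_{i}-z_{j}\right|^{2}e^{-\psi_{0}(z_{i})}e^{-\psi_{0}(z_{j})}$ and the Vandermonde factorization.
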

\begin{proof}
First observe that, in terms of the standard complex coordinate $z$
on $S^{2}$ with the north pole removed we have the basic identity
\begin{equation}
\left\Vert x_{1}-x_{2}\right\Vert ^{2}=\left|z_{i}-z_{j}\right|^{2}e^{-\psi_{0}(z_{1})}e^{-\psi_{0}(z_{2})},\label{eq:distance}\end{equation}
 (this is obvious for $z_{i}$ and $z_{j}$ on the unit-circle in
$\C$ and hence it holds everywhere, since the action of the group
$SU(2)$ by Möbius transformations acts transitively and preserves
both sides above). Substituting the previous formula in the definition
of $\rho_{N}$ above shows, using the standard product formula for
the Vandermonde determinant $\Delta(z_{1},...,z_{N}),$ that \[
\rho_{N}(x_{1},...,x_{N}):=\left|\Delta(z_{1},...,z_{N})\right|^{2}e^{-\psi_{0}(z_{1})}...e^{-\psi_{0}(z_{N})},\]
 where $\Delta(z_{1},...,z_{N_{k}})=\mbox{det}(s_{i}(z_{j})),$ with
$s_{i}(z)$ equal to the monomial $z^{j}.$ By the general formula
\ref{eq:expect is det} it follows that 

\[
-\log\E_{N}(e^{-(u(x_{1})+...+u(x_{N}))})=\mathcal{L}_{N-1}(u)\mbox{\emph{ }}\]
where $\mathcal{L}_{m}(u)$ is the functional \ref{eq:def of l m in pf fang}
. A simple scaling hence gives \[
\log\E_{N}(e^{-t(\tilde{u(x_{i})}+...+\tilde{u(x_{N})}})=-\mathcal{L}_{N-1}(u)-\E_{N}(\sum_{i=1}^{N}u(x_{i})).\]
Moreover, by general properties of determinantal point processes there
exists a function $\rho_{1}$ (called the one-point correlation function
\cite{h-k-p}) on $X$ such that \[
\E_{N}(\sum_{i=1}^{N}u(x_{i}))=\int u\rho_{1}\omega_{0},\]
 (in the present setting $\rho_{1}\omega_{0}$ may be identified with
the Bergman measure $\beta_{0}$ corresponding to $u=0;$ compare
formula \ref{eq:beta as one-pt}). Since $\rho_{N}$ and hence $\rho_{1}$
is invariant under isometries of $S^{2}$ it follows that $\rho_{1}$
is identically constant. Setting $u=1$ above forces in turn this
constant to be equal to $N.$ All in all this means that \[
\log\E_{N}(e^{-t(\tilde{u(x_{1})}+...+\tilde{u(x_{N})}})=-\mathcal{L}_{N-1}(u)-N\int u\omega_{0}\]
 Hence, applying Corollary \ref{cor:moser} finishes the proof of
the theorem (by replacing $u$ by $-tu)$.
\end{proof}
In the formula above we used the notation $\left\Vert du\right\Vert ^{2}$
for the squared $L^{2}-$norm on $S^{2}$ of the gradient of $u,$
written in conformally invariant notation as $\left\Vert du\right\Vert ^{2}:=\int_{S^{2}}du\wedge d^{c}u$
as in previous sections. Since the moment generating function of a
random variable controls the tail of its distribution we obtain the
following effective large deviation bound:
\begin{cor}
In the setting of the previous theorem the following large deviation
bound holds: for any given positive number $\lambda$: \[
\mbox{Prob}{}_{N}\{\frac{1}{N}(u(x_{1})+...+u(x_{N}))>\lambda\}\leq e^{-\frac{N^{2}\lambda^{2}}{2\left\Vert du\right\Vert ^{2}}\frac{N+1}{N}}\]
if the linear statistic \ref{eq:linear stat} is centered, i.e. if
its expected value vanishes.\end{cor}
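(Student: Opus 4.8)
The plan is to derive the bound from the exponential moment estimate \ref{eq:ineq in multi particle} of the previous theorem by a standard Chernoff--Markov argument, optimized over the auxiliary parameter $t$. First I would record that, since the linear statistic $S_{N}:=u(x_{1})+\cdots+u(x_{N})$ is assumed centered, its fluctuation coincides with $S_{N}$ itself, so that the hypothesis of the theorem applies verbatim with $\widetilde{u(x_{i})}=u(x_{i})$ and the right-hand side of \ref{eq:ineq in multi particle} bounds $\log\E_{N}(e^{tS_{N}})$ directly.

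Next, for any $t>0$, Markov's inequality applied to the nonnegative random variable $e^{tS_{N}}$ gives
\[
\mbox{Prob}{}_{N}\{\tfrac{1}{N}S_{N}>\lambda\}=\mbox{Prob}{}_{N}\{e^{tS_{N}}>e^{tN\lambda}\}\leq e^{-tN\lambda}\,\E_{N}(e^{tS_{N}}).
\]
Inserting the upper bound \ref{eq:ineq in multi particle} on the moment generating function then yields
\[
\mbox{Prob}{}_{N}\{\tfrac{1}{N}S_{N}>\lambda\}\leq\exp\Bigl(-tN\lambda+\tfrac{N}{N+1}\tfrac{t^{2}}{2}\left\Vert du\right\Vert ^{2}\Bigr)
\]
for every $t>0$, which is the core estimate to be optimized.

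The final step is to minimize the exponent over $t$. As a quadratic in $t$ with positive leading coefficient, it is minimized at $t_{\ast}=\lambda(N+1)/\left\Vert du\right\Vert ^{2}$, and substituting $t_{\ast}$ collapses the exponent to exactly $-\tfrac{N\lambda^{2}(N+1)}{2\left\Vert du\right\Vert ^{2}}=-\tfrac{N^{2}\lambda^{2}}{2\left\Vert du\right\Vert ^{2}}\tfrac{N+1}{N}$, which is precisely the claimed bound. There is essentially no serious obstacle here: all the probabilistic and geometric content is already packaged in the moment generating function inequality \ref{eq:ineq in multi particle}, and the only point requiring a short computation is verifying that the minimizing $t_{\ast}$ reproduces the exact constant appearing in the exponent of the corollary.
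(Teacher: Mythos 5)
Your proposal is correct and follows essentially the same route as the paper: a Chernoff--Markov bound on $e^{tS_{N}}$ using the moment generating function estimate of the theorem (valid since the centered statistic equals its fluctuation), followed by optimizing the resulting quadratic exponent in $t$. The paper merely normalizes the variable to $Y=\frac{1}{N\lambda}S_{N}$ before applying Markov's inequality, which is a cosmetic difference; your value $t_{\ast}=\lambda(N+1)/\left\Vert du\right\Vert ^{2}$ and the resulting exponent agree with the stated bound.
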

\begin{proof}
The proof of this consequence of the previous theorem is a standard
application of Markov's inequality: for any given $t>0$ we have \[
\mbox{Prob}{}\{Y>1\}=\mbox{Prob}{}\{e^{tY}>e^{t}\}\leq e^{-t}\E(e^{tY}),\]
 where in our case $Y=\frac{1}{N\lambda}(u(x_{1})+...+u(x_{N}).$
By the previous theorem the rhs above is bounded by $e^{-t+ct^{2}}$
for $c=\frac{N}{N+1}\frac{1}{2}\left\Vert d(\frac{1}{N\lambda}u)\right\Vert ^{2}.$
Setting $t=1/2c$ (i.e. optimizing over $t)$ finally proves the corollary.
\end{proof}
Note that effective bounds as above are usually called \emph{Chernoff
bounds }in the classical probabilistic setting where the role of the
linear statistic is played by a random variable $Y$ of the form $Y=\frac{1}{N}(Y_{1}+...+Y_{N}),$
where $Y_{i}$ are \emph{independent }random variables with identical
\emph{symmetric} distribution. 

The bound in the previous corollary should be compared with the general
\emph{non-effective} bound \begin{equation}
\mbox{Prob}{}_{N_{k}}\{\frac{1}{N}(u(x_{1})+...+u(x_{N}))>\lambda\}\leq Ce^{-N^{2}/C},\label{eq:large dev bd non eff}\end{equation}
 where $C$ is a non-explicit constant, implied by the \emph{large
deviation principle} proved in \cite{be2b} for determinantal point
process in the general line bundle setting (compare the beginning
of this section). Note also that the bound \ref{eq:large dev bd non eff}
is essentially contained in the analysis in \cite{z-z}, since $X=\P^{1}$
in this case.

In the large $N-$limit the inequality in the previous theorem is
also closely related to a\emph{ Central Limit Theorem (CLT)} for the
linear statistic \ref{eq:linear stat}. Indeed, when $N$ tends to
infinity it can be shown that the inequality \ref{eq:ineq in multi particle}
becomes an asymptotic \emph{equality, i.e.} \begin{equation}
\lim_{N\rightarrow\infty}\log\E_{N}(e^{-t(\tilde{u(x_{i})}+...+\tilde{u(x_{N})}})=\frac{t^{2}}{2}\int_{S^{2}}du\wedge du^{c}\label{eq:free en limit clt}\end{equation}
 for any $t\in\R.$ In turn, by basic probability theory, this latter
fact can be shown to be equivalent to the following CLT: \[
\widetilde{u(x_{1})}+\widetilde{u(x_{2})}+...+\widetilde{u(x_{N})}\rightarrow\mathcal{N}(0,\frac{1}{2}\left\Vert du\right\Vert ^{2}),\]
in distribution, when $N\rightarrow\infty,$ where $\mathcal{N}(0,\frac{1}{2}\left\Vert du\right\Vert ^{2})$
is the centered normal variable with variance $\frac{1}{2}\left\Vert du\right\Vert ^{2}.$
See \cite{r-v2} for combinatorial proofs of this CLT on the sphere
and \cite{be2} for general results in the line bundle setting, using
Bergman kernel asymptotics. It is also interesting to compare with
the case of unitary random matrices, where the the role of the asymptotics
\ref{eq:free en limit clt} is played by \emph{Szegö's strong limit
theorem} \cite{dia}. See also \cite{joh} for the case of \emph{Hermitian}
random matrices and \cite{a-h-m1} for\emph{ normal} random matrices.

Loosely, speaking the CLT theorem above may also be formulated as
the statement that the \emph{ the potential of the fluctuations of
the empirical measure \ref{eq:intro random measure} on $S^{2}$ converges
in distribution to the Gaussian free field on $S^{2}$ }(see the introduction
in \cite{r-v2} and references therein).
\begin{rem}
\label{rem:random matrix on p1}Consider the probability measure on
$gl(N,\C)$ obtained by declaring the complex entries of an $N\times N$
matrix to be i.i.d complex Gaussians. Let $\Phi_{N}$ be the map defined
by \[
\Phi_{N}:\,(G_{1},G_{2})\mapsto(z_{1},...,z_{N})/S_{N},\]
where the $z_{i}:$s are the $N$ zeroes in $\C$ (taking multiplicities
into account) of $\det(G_{1}-zG_{2}),$ i.e. the eigen values of the
matrix $G_{2}(G_{1})^{-1},$ when $G_{1}$ is invertible. A remarkable
result in \cite{kr} says that the push-forward under $\Phi_{N}$
of the product probability measure on $gl(N,\C)\times gl(N,\C)$ is
precisely the random point process on $S^{2}$ with $N$ particles
defined by the density \ref{eq:density on sphere} (under stereographic
projection). 
\end{rem}

\section{\label{sec:Convergence-towards-Mabuchi's}Convergence towards Mabuchi's
K-energy}

In this section we will briefly consider the asympotic situation when
the ample line bundle $L$ is replaced by a multiple $kL$ for a large
positive integer $k$. Building on \cite{bern2} Berndtsson we will
relate the large $k$ asymptotics of $\mathcal{F}_{k\omega_{0}}$
to \emph{Mabuchi's K-energy.} The work \cite{bern2} was in turn inspired
by the seminal work of Donaldson \cite{don1} where a functional closely
related to $\mathcal{F}_{k\omega_{0}}$ was introduced (see section
\ref{sub:Comparison-with-Donaldson's} below). It should be pointed
out that there will be no original results in this section. But we
will give a simple proof of Theorem \ref{thm:mabuchi} below which
only uses the $C^{0}-$regularity of the geodesic connecting two given
smooth points in $\mathcal{H}_{\omega_{0}}$, which hopefully is of
some interest. See \cite{ch} for a proof which uses the $\mathcal{C}_{\C}^{1.,1}-$regularity
(Theorem \cite{ch}) in the case when the first Chern class of $X$
is assumed non-positive ). 

Fixing $\omega_{0}\in c_{1}(L)$ we willl take $k\omega_{0}$ as the
reference Kähler metric in $c_{1}(kL).$ Throughout the section $u$
will denote an element in $\mathcal{H}_{\omega_{0}}.$ For simplicity
we assume that that the volume $\mbox{Vol}(\omega_{0})=1.$ We will
write \[
\mathcal{F}_{k}(u):=k\mathcal{F}_{k\omega_{0}}-\bar{s}\mathcal{E}_{\omega_{0}}\]
 where $\bar{s}$ is the topological invariant of $L$ defined as
the average of the \emph{scalar curvature} $s_{u}$ of the Kähler
metric $\omega_{u}$ for any $u$ in $\mathcal{H}_{\omega_{0}}.$
The scaling in the definition of $\mathcal{F}_{k}(u)$ is motivated
by the fullowing asymptotic expansion of the differential of the functional
$\mathcal{L}_{k\omega_{0}}:$ \begin{equation}
(d\mathcal{L}_{k\omega_{0}})_{ku}=\omega_{u}^{n}(1+\frac{1}{k}s_{u}+o(1))/n!\label{eq:asym of dl}\end{equation}
 where the term $o(1)$ denotes a function which tends to zero uniformly
on $X$ (for $u$ fixed). 

Using formula \ref{eq:deriv of l} the proof of the previous formula
is reduced to the well-known asymptotics of the Bergman measure on
$kL+E,$ where $E$ is a given line bundle on $E,$ due to Tian-Catlin-Zelditch.
The reason that $s_{u}$ appears in the second term is that $E=K_{X}$
(see \cite{bern2}). In particular, we obtain \begin{equation}
(d\mathcal{F}_{k})_{ku}:=-(s_{u}-\bar{s})\omega_{u}^{n}+o(1)\label{eq:asympt of df}\end{equation}
Following Mabuchi \cite{m2,ti} the \emph{K-energy} (also called the
\emph{Mabuchi functional}) is defined, up to an additive constant,
as the primitive $\mathcal{M}$ on $\mathcal{H}_{\omega_{0}}$ of
the exact one-form defined by the measure valued function $u\mapsto(s_{u}-s)\omega_{u}^{n}$
on $\mathcal{H}_{\omega_{0}}.$ Hence, $u$ is a critical point of
$\mathcal{M}$ on $\mathcal{H}_{\omega_{0}}$ iff the Kähler metric
$\omega_{u}$ has \emph{constant scalar curvature}. We will denote
by $\mathcal{M}_{\omega_{0}}$ the K-energy normalized so that $\mathcal{M}_{\omega_{0}}(0)=0.$
Integrating along line segments in $\mathcal{H}_{\omega_{0}}$ and
using \ref{eq:asympt of df} immediately gives the asympotics \begin{equation}
\mathcal{F}_{k}(u)=-\mathcal{M}_{\omega_{0}}(u)+o(1).\label{eq:asympt of f}\end{equation}
For the most general version of the following theorem see \cite{c-t}.
\begin{thm}
\label{thm:mabuchi}Assume that the Kähler metric $\omega_{u}$ has
constant scalar curvature. Then $u$ minimizes Mabuchi's K-energy
$\mathcal{M}_{\omega_{0}}$ on $\mathcal{H}_{\omega_{0}}.$\end{thm}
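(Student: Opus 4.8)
The plan is to transfer the statement about the \emph{minimizer} of $\mathcal{M}_{\omega_0}$ into a statement about the \emph{asymptotic maximizer} of the functionals $\mathcal{F}_k$, exploiting the concavity of each $\mathcal{F}_k$ along geodesics together with the asymptotic identity $\mathcal{F}_k=-\mathcal{M}_{\omega_0}+o(1)$ of \ref{eq:asympt of f}. Fix an arbitrary $v\in\mathcal{H}_{\omega_0}$ and let $u_t$ be the $\mathcal{C}^0$-geodesic with $u_0=u$ and $u_1=v$ furnished by the envelope construction \ref{eq:envelop}; since both endpoints are smooth, its right tangent $v_0:=\frac{d}{dt}_{t=0+}u_t$ exists and is bounded by (the argument of) Lemma \ref{lem:v is bded}. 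First I would record that, for every $k$, the function $t\mapsto\mathcal{F}_k(u_t)$ is continuous and concave on $[0,1]$: the term $k\mathcal{F}_{k\omega_0}(u_t)$ is concave, since $\mathcal{L}_{k\omega_0}(u_t)$ is convex by Berndtsson's Theorem \ref{thm:(Berndtsson)-Let-} and $\mathcal{E}_{k\omega_0}(u_t)$ is affine by Proposition \ref{pro:e is affine} applied to the polarization $kL$, while the remaining term $-\bar{s}\,\mathcal{E}_{\omega_0}(u_t)$ is affine by the same Proposition.

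Concavity then yields the endpoint bound $\mathcal{F}_k(u_1)-\mathcal{F}_k(u_0)\leq\frac{d}{dt}_{t=0+}\mathcal{F}_k(u_t)$, and the heart of the matter is to estimate this right derivative exactly as in Section \ref{sub:Proof-of-Theorem-main}. Using that $(u_t-u_0)/t$ decreases to $v_0$ (convexity of the geodesic in $t$) together with the G\^ateaux differentiability supplied by Theorem \ref{thm:deriv of composed } and formula \ref{eq:deriv of l}, monotone and dominated convergence give $\frac{d}{dt}_{t=0+}\mathcal{F}_k(u_t)\leq\int_X v_0\,(d\mathcal{F}_k)_{u_0}$. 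The decisive input is the asymptotics \ref{eq:asympt of df}: at the constant scalar curvature point $u_0=u$ the leading term $-(s_u-\bar{s})\omega_u^n$ \emph{vanishes}, so $(d\mathcal{F}_k)_{u_0}$ is a signed measure whose total variation is $o(1)$ uniformly on $X$ as $k\to\infty$; since $v_0$ is bounded, $\int_X v_0\,(d\mathcal{F}_k)_{u_0}=o(1)$. Hence $\mathcal{F}_k(u_1)-\mathcal{F}_k(u_0)\leq o(1)$.

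To conclude I would feed in the asymptotic identity \ref{eq:asympt of f}, which is valid at the smooth endpoints $u_0,u_1\in\mathcal{H}_{\omega_0}$: writing $\mathcal{F}_k(u_i)=-\mathcal{M}_{\omega_0}(u_i)+o(1)$ and substituting into the previous inequality gives $\mathcal{M}_{\omega_0}(u_0)-\mathcal{M}_{\omega_0}(u_1)\leq o(1)$. Letting $k\to\infty$ yields $\mathcal{M}_{\omega_0}(u)\leq\mathcal{M}_{\omega_0}(v)$, and since $v$ was arbitrary, $u$ minimizes the K-energy on $\mathcal{H}_{\omega_0}$. A pleasant feature of this route is that it never requires the geodesic $u_t$ to be more regular than $\mathcal{C}^0$: the asymptotic expansions \ref{eq:asympt of df} and \ref{eq:asympt of f} are only ever evaluated at the two fixed smooth endpoints, and the interior of the geodesic enters solely through the monotone-convergence estimate of the one-sided derivative.

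I expect the main obstacle to be the rigorous justification of the one-sided derivative estimate $\frac{d}{dt}_{t=0+}\mathcal{F}_k(u_t)\leq\int_X v_0\,(d\mathcal{F}_k)_{u_0}$ along a path that is a priori only $\mathcal{C}^0$ (or $\mathcal{C}_{\C}^{1,1}$). This is precisely the mechanism of Section \ref{sub:Proof-of-Theorem-main}: one must differentiate $\mathcal{E}_{\omega_0}\circ P_{\omega_0}$ and $\mathcal{L}_{\omega_0}$ along the path using the convexity of $t\mapsto u_t$ so that $(u_t-u_0)/t\downarrow v_0$, invoke the boundedness of $v_0$ to apply dominated convergence, and combine the concave ($\mathcal{E}$) and differentiable ($\mathcal{L}$) contributions with the correct signs; the extra affine term $-\bar{s}\,\mathcal{E}_{\omega_0}$ is harmless precisely because Proposition \ref{pro:e is affine} gives it an exact (two-sided) derivative. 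A secondary point demanding care is that the $o(1)$ error in \ref{eq:asympt of df} be \emph{uniform} over $X$, which is guaranteed by the uniformity of the Tian--Catlin--Zelditch Bergman kernel asymptotics underlying that expansion.
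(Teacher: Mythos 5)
Your proposal is correct and follows essentially the same route as the paper: concavity of $\mathcal{F}_{k}$ along the $\mathcal{C}^{0}$-geodesic, the one-sided derivative bound $\frac{d}{dt}_{t=0+}\mathcal{F}_{k}(u_{t})\leq\int_{X}v_{0}\,(d\mathcal{F}_{k})_{u_{0}}$ via monotone/dominated convergence, the vanishing of the leading term of $(d\mathcal{F}_{k})_{u}$ at the constant scalar curvature point together with the boundedness of $v_{0}$ from Lemma \ref{lem:v is bded}, and the passage $k\rightarrow\infty$ using \ref{eq:asympt of f}. The only cosmetic difference is that you work directly at the point $u$ instead of first normalizing $u=0$ by the cocycle property.
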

\begin{proof}
By the cocycle property of $\mathcal{M}_{\omega_{0}}$ we may as well
assume that $u=0$ in the statement above. Now fix an arbitrary $u$
in $\mathcal{H}_{\omega_{0}}$ and take the $C^{0}-$geodesics $u_{t}$
connecting $0$ and $u.$ Given a positive integer $k$ the fact that
$\mathcal{F}_{k}$ is concave along $u_{t}$ (compare the proof of
Theorem \ref{thm:main}) immediately gives \[
\mathcal{F}_{k}(u)\leq\mathcal{F}_{k}(0)+\frac{d}{dt}_{t=0+}\mathcal{F}(u_{t}).\]
Combining formulas \ref{eq:asympt of f}, \ref{eq:asympt of df} then
gives \[
\mathcal{F}_{k}(u)\leq-\mathcal{M}_{\omega_{0}}(u)+\int_{X}(s_{u}-\bar{s})\omega_{u}^{n}v_{0}+\int_{X}o(1)\omega_{u}^{n}v_{0},\]
 where $v_{0}=\frac{du}{dt}_{t=0+}$. But by lemma \ref{lem:v is bded}
we have that $v_{0}$ is uniformly bounded (in fact it is enough to
know that its $L^{1}-$norm is uniformly bounded, which can be proved
as in \cite{bbgz}. Letting $k$ tend to infinity the assumption on
$u$ hence gives \[
-\mathcal{M}_{\omega_{0}}(u)\leq-\mathcal{M}_{\omega_{0}}(0),\]
 which hence finishes the proof of the theorem.
\end{proof}
In particular, the proof above shows that, $\mathcal{M}_{\omega_{0}}$
is {}``convex along a geodesic'', in the sense that it is the point-wise
limit of the \emph{convex} functionals $\mathcal{F}_{k}$ along a
geodesic connecting two points in $\mathcal{H}_{\omega_{0}},$ only
using the $C^{0}-$regularity of the corresponding geodesic. Note
however that the definition of $\mathcal{M}_{\omega_{0}}$ as given
above does not even make sense unless $u_{t}$ is in $\mathcal{C}^{4}(X)$,
for $t$ fixed and $\omega_{t}>0$ (the smoothness assumption may
be relaxed to $u_{t}\in\mathcal{C}_{\C}^{1,1}(X)$ using the alternative
formula for $\mathcal{M}_{\omega_{0}}$ from \cite{ti2,ch2}). In
the case when the geodesic $u_{t}$ is assumed \emph{smooth} and $\omega_{t}>0$
the argument in the proof of the theorem above is essentially contained
in \cite{bern2}. In this latter case the convexity statement seems
to first have appeared in \cite{m} (see also \cite{d00} ). In \cite{don1}
the previous theorem was proved using the deep results in \cite{do1}
and the {}``finite dimensional geodesics'' in approximations of
$\mathcal{H}_{\omega_{0}}$ as briefly explained in the following
section.

\subsection{\label{sub:Comparison-with-Donaldson's}Comparison with Donaldson's
setting and balanced metrics}

In the setting of Donaldson \cite{don1} the role of the space $H^{0}(X,L+K_{X})$
is played by the space $H^{0}(X,L).$ Any given function $u$ in $\mathcal{H}_{\omega_{0}}$
induces an Hermitian norm $Hilb(u)$ on $H^{0}(X,L)$ defined by \[
Hilb(u)[s]^{2}:=\int_{X}\left|s\right|^{2}e^{-(\psi_{0}+u)}(\omega_{u})^{n}/n!\]
 Then the functional that we will refer to as $\mathcal{L}_{D}(u),$
which plays the role of $\mathcal{L}_{\omega_{0}}(u)$ in Donaldson's
setting, is defined as in formula \ref{eq:def of l intro}, but using
the scalar product on $H^{0}(X,L)$ corresponding to $Hilb(u).$ With
this definition it turns out that $\mathcal{L}_{D}(u)$ is \emph{concave}
along smooth geodesics (see Theorem 3.1 in \cite{bern2} for a generalization
of this fact). However, it does not appear to be concave along a general
psh paths, which makes approximation more difficult in this setting.
Moreover, Theorem 2 in \cite{don1} says that the critical points
of $\mathcal{E}-\mathcal{L}_{D}$ are in fact \emph{minimizers.} %
\footnote{Comparing with the notation in \cite{don1}, $\mathcal{L}_{D},$ $\mathcal{E}$
and $u$ correspond to $-\mathcal{L},$ $-I$ and $-\phi,$ respectively. %
}A major technical advantage of Donaldson's setting is that the critical
points (which are called \emph{balanced} in \cite{don1}) of the functional
$\mathcal{E}-\mathcal{L}_{D}$ acting on all of $\mathcal{C}^{\infty}(X)$
are automatically of the form \begin{equation}
\psi=\log({\textstyle \frac{1}{N}\sum_{i}}\left|S_{i}\right|^{2})\label{eq:bergm metric}\end{equation}
 fore some base $(S_{i})$ in $H^{0}(X,L).$ In particular, $u$ is
automatically in $\mathcal{H}_{\omega_{0}}$ (assuming that $L$ is
very ample). This is then used to replace the space $\mathcal{H}_{\omega_{0}}$
by the sequence of \emph{finite dimensional} symmetric spaces $GL(N,\C)/U(N)$
corresponding to the set of metrics on $L$ of the form \ref{eq:bergm metric}
(called Bergman metrics). In particular, the new geodesics, defined
wrt the Riemannian structure in the symmetric space $GL(N,\C)/U(N)$
are automatically smooth and the analysis in \cite{don1} is reduced
to this finite dimensional situation. 

Note also that in this setting there is a sign difference in the expansion
\ref{eq:asym of dl}, where $s_{u}$ is replaced by $-s_{u}.$ As
a consequence, in Donaldson's case the functional corresponding to
$\mathcal{F}_{k}$ converges to $\mathcal{M}_{\omega_{0}}$ (without
the minus sign!), which hence becomes convex along smooth geodesics,
which is consistent with the conclusion reached above, as it must. 

Finally, note that combining the upper bound in Theorem \ref{thm:main}
combined with the lower bound coming from a (slight variant) of Donaldson's
scalar product on $H^{0}(X,L+K_{X})$ (i.e. using Theorem 2 in \cite{don1})
gives \[
-C+k\mathcal{E}_{k\omega_{0}}(u)\leq\mathcal{L}_{k\omega_{0}}(ku)\leq k\mathcal{E}_{k\omega_{0}}(u),\]
 where $C$ is a positive constant proportional to $\left\Vert \omega_{u}^{n}/\omega_{0}^{n}\right\Vert _{L^{\infty}(X)}.$
In particular, this yields the asymptotics \begin{equation}
\mathcal{L}_{k\omega_{0}}(ku)=k\mathcal{E}_{\omega_{0}}(u)+O(1),\label{eq:leading as of l}\end{equation}
 which is a well-known result. In fact, it may be directly obtained
using the leading term in the asymptotics \ref{eq:asym of dl}( see
\cite{b-b} for the generalization to non-positively curved metrics).

\section{Appendix}

\subsection{\label{sub:Bergman-kernels}Bergman kernels}

Given a function $u$ corresponding to the weight $\psi:=\psi_{0}+u$
on the line bundle $L$ we denote by $K_{u}(x,y)$ the \emph{Bergman
kernel} of the Hilbert space $(H^{0}(X,L+K_{X}),\left\langle \cdot,\cdot\right\rangle _{\psi_{0}+u}),$
i.e. \[
K_{u}(x,y):=i^{n^{2}}\sum_{i=1}^{N}s_{i}(y)\wedge\bar{s_{i}(x),}\]
represented in terms of a given orthonormal base $(s_{i})$ in $(H^{0}(X,L+K_{X}),\left\langle \cdot_{i},\cdot\right\rangle _{\psi_{0}+u}).$
This kernel may be caracterized as the integral kernel of the correponding
orthogonal projection $\Pi_{u}$ onto $(H^{0}(X,L+K_{X}),\left\langle \cdot_{i},\cdot\right\rangle _{\psi_{0}+u}),$
i.e. for any smooth section $s$ of $L+K_{X}$

\begin{equation}
(\Pi_{u}s)(x)=\int_{X_{y}}s(y)\wedge\bar{K}(x,y)e^{-(\psi(y)}\label{eq:pi in terms of k}\end{equation}
The \emph{Toeplitz operator $T[f]$ with symbol $f\in C^{0}(X),$}
acting on $(H^{0}(X,L+K_{X}),\left\langle \cdot_{i},\cdot\right\rangle _{\psi_{0}})$
(defined below formula \ref{eq:l as toeplitz det intro}) may then
be expressed as \begin{equation}
(T[f])(x)=\int_{X_{y}}f(y)s(y)\wedge\bar{K}(x,y)e^{-\psi(y)}\label{eq:toeplitz as k}\end{equation}
Applying \ref{eq:pi in terms of k} $K_{u}(x,\cdot)$ gives the following
{}``integrating out'' formula \begin{equation}
N\beta_{u}(x):=K_{u}(x,x)e^{-\psi(x)}:=\int_{X_{y}}\left|K(x,y)\right|^{2}e^{-(\psi(x)+\psi(y)}\label{eq:integr out}\end{equation}
When studying the dependence of $\beta_{u}$ on $u$ it is useful
to express $\beta_{u}(x)$ as the normalized\emph{ one-point correlation
measure }of the determinantal point process induced by $\psi$ (see
section \ref{sec:Application-to-determinantal} and \cite{be2}) \begin{equation}
\beta_{u}(x)=\frac{1}{N}\E_{\psi}(\sum_{i=1}^{N}\delta_{x_{i}})=\int_{X^{N-1}}\left|(\det S_{0})(x,x_{2},...,x_{N}\right|^{2}e^{-\psi(x)}e^{-\psi(x_{2})}...e^{-\psi(x_{N})}/Z_{\psi}\label{eq:beta as one-pt}\end{equation}
 In particular, the map $(x,t)\mapsto(\beta_{u_{t}}(x)/\omega_{0}^{n})$
is \emph{continuous} if $u_{t}$ is a continuous path and hence there
is a positive constant $C$ such that\begin{equation}
1/C\leq(\beta_{u_{t}}(x)/\omega_{0}^{n})\leq C\label{eq:bounds on beta}\end{equation}
 on $[0,1]\times X,$ if $L+K_{X}$ is globally generated, i.e. if
$\beta_{u_{t}}(x)>0$ point-wise. Formula \ref{eq:beta as one-pt}
also shows, by the dominated convergence theorem, that $\frac{d\beta_{u_{t}}(x)}{dt}_{t=0+}$
exists under the assumptions in the following lemma. 
\begin{lem}
\label{lem:deriv of beta}Let $u_{t}$ be a family of continuous functions
on $X$ such that the right derivative $v_{t}:=\frac{du_{t}}{dt}_{+}$
exists and is uniformly bounded on $[0,1]\times X.$ Then \begin{equation}
R[v](x):=\frac{d\beta_{u_{t}}(x)}{dt}_{t=0+}=\int_{X_{y}}\left|K_{u}(x,y)\right|^{2}e^{-(\psi_{0}(x)+\psi_{0}(y)}v_{0}(y)-\beta_{u_{t}}(x)v_{0}(x)\label{eq:r operator}\end{equation}
.\end{lem}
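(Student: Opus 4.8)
The plan is to reduce everything to differentiating the inverse Gram matrix of a \emph{fixed} basis, the only analytic subtlety being that $v_t$ is merely a one-sided derivative. Fix a $t$-independent basis $(e_p)_{p=1}^N$ of $H^0(X,L+K_X)$ (for instance the monomials when $X=\P^1$) and let $M(t)_{pq}:=i^{n^2}\int_X e_p\wedge\bar{e}_q\,e^{-\psi_t}$ be its Gram matrix for the weight $\psi_t:=\psi_0+u_t$. Orthonormalizing $(e_p)$ and inserting the result into (\ref{eq:bergman meas}) gives
\[
\beta_{u_t}(x)=\frac{1}{N}\Bigl(i^{n^2}\sum_{p,q}(M(t)^{-1})_{pq}\,e_p(x)\wedge\bar{e}_q(x)\Bigr)e^{-\psi_t(x)},
\]
in which the entire $t$-dependence is carried by $M(t)^{-1}$ and by the single scalar $e^{-\psi_t(x)}$, the holomorphic factors $e_p(x)$ being frozen.

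First I would prove that $M(t)$ has a right derivative at $t=0$, namely $\frac{d}{dt}_{+}M(0)_{pq}=-i^{n^2}\int_X e_p\wedge\bar{e}_q\,v_0\,e^{-\psi}$ with $\psi:=\psi_0+u_0$. The only thing to check is the interchange of the limit of difference quotients with the integral. Since $v_t=\frac{du_t}{dt}_{+}$ exists at every point and is bounded by some $C$ on $[0,1]\times X$, the one-sided mean value inequality for Dini derivatives shows that $t\mapsto u_t(x)$ is $C$-Lipschitz; hence $(e^{-\psi_h(x)}-e^{-\psi(x)})/h$ is bounded in absolute value by a fixed integrable multiple of $e^{-\psi(x)}$ uniformly in small $h>0$ and converges pointwise to $-v_0(x)e^{-\psi(x)}$, so dominated convergence applies. (In the setting of Proposition \ref{pro:unique alt} the geodesic $u_t$ is convex in $t$, the quotients decrease to $v_0$, and monotone convergence may be used instead.)

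Next I would differentiate the displayed formula at $t=0$, using $\frac{d}{dt}_+M^{-1}=-M^{-1}\bigl(\frac{d}{dt}_+M\bigr)M^{-1}$ and $\frac{d}{dt}_+e^{-\psi_t(x)}=-v_0(x)e^{-\psi(x)}$. The product rule yields exactly two terms. Differentiating the pointwise scalar reproduces $-v_0(x)\beta_{u_0}(x)$, the local term of (\ref{eq:r operator}). Differentiating $M^{-1}$ and substituting $\frac{d}{dt}_+M_{kl}=-i^{n^2}\int_X e_k\wedge\bar{e}_l\,v_0\,e^{-\psi}$ produces a sum over four indices which, upon regrouping, equals
\[
\frac{1}{N}\Bigl(\int_{X_y}|K_{u}(x,y)|^2\,e^{-\psi(y)}\,v_0(y)\Bigr)e^{-\psi(x)};
\]
here one uses that the reproducing kernel is $K_u(x,y)=i^{n^2}\sum_{p,q}(M^{-1})_{pq}e_p(x)\wedge\bar{e}_q(y)$, and the Hermitian symmetry of $M$ ensures that the two factors $M^{-1}$ together with $\frac{d}{dt}_+M$ reassemble into $|K_u(x,y)|^2$. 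This is the nonlocal term of (\ref{eq:r operator}), written with the normalization of (\ref{eq:bergman meas})--(\ref{eq:integr out}); in particular, for $v_0$ constant the reproducing identity (\ref{eq:integr out}) collapses the two terms to zero, as it must since $\beta_{u}$ is unchanged when a constant is added to $u$. Adding the two contributions gives $R[v](x)$.

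I expect the main obstacle to be precisely the justification in the second paragraph: as $v_t$ is assumed only to be a \emph{bounded} one-sided derivative and is not assumed continuous, one cannot differentiate under the integral sign directly, and it is the Dini mean value inequality that turns the pointwise hypothesis into the Lipschitz-in-$t$ bound providing a uniform integrable majorant. Everything downstream is the routine index bookkeeping identifying the nonlocal term with the reproducing-kernel integral $\int_{X_y}|K_u(x,y)|^2 e^{-(\psi(x)+\psi(y))}v_0(y)$.
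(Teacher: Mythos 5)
Your proof is correct, but it takes a genuinely different route from the paper's. The paper never introduces a fixed basis: it differentiates the reproducing identity \ref{eq:integr out}, $N\beta_{u_t}(x)=\int_{X_y}|K_t(x,y)|^2e^{-(\psi_t(x)+\psi_t(y))}$, in $t$ and then applies the reproducing formula \ref{eq:pi in terms of k} to the holomorphic section $y\mapsto\partial_tK_t(x,y)$ to recognize the kernel-derivative term as $2\partial_t(K_t(x,x))$; solving the resulting identity yields \ref{eq:r operator} without ever computing $\partial_t K_t$ explicitly. Your Gram-matrix argument instead makes that derivative explicit via $\frac{d}{dt}_{+}M^{-1}=-M^{-1}\bigl(\frac{d}{dt}_{+}M\bigr)M^{-1}$, which buys two things: the only differentiation under an integral sign occurs in the $N^2$ scalar entries of $M(t)$, and the needed domination of the difference quotients is isolated cleanly, whereas the paper justifies differentiability through the one-point-correlation representation \ref{eq:beta as one-pt} and dominated convergence, which requires exactly the same uniform bound. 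Two small remarks. First, the mean value inequality for Dini derivatives requires $t\mapsto u_t(x)$ to be continuous; this is not literally among the stated hypotheses (only continuity in $x$ is asserted), but it holds in every application, since the path comes from a function $U$ continuous on $M$, and it is equally needed for the paper's own appeal to dominated convergence, so your argument is if anything more explicit about the one real analytic point. Second, your sanity check with $v_0$ constant correctly detects that the nonlocal term in \ref{eq:r operator} should carry a factor $\frac{1}{N}$ (consistent with \ref{eq:integr out} and with the conclusion $\partial_t(K_t(x,x))e^{-\psi_t(x)}=\int_{X_y}|K_t(x,y)|^2 v_t(y)e^{-(\psi_t(x)+\psi_t(y))}$ reached in the paper's proof); this is a normalization slip in the displayed statement that your computation gets right.
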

\begin{proof}
The proof of the formula was obtained in \cite{be} (formula $5),$
at least in the smooth case. For completeness we recall the simple
proof. By the discussion above we may differentiate formula \ref{eq:integr out}
and use Leibniz product rule to get \[
\partial_{t}(K_{t}(x,x))=2\mbox{Re }\int_{X_{y}}\partial_{t}(K_{t}(x,y)\wedge\bar{K}(x,y)e^{-\psi_{t}(y)}-\int_{X_{y}}\left|K_{t}(x,y)\right|^{2}(\partial_{t}\psi_{t}(y))e^{-(\psi_{t}(x)+\psi_{t}(y)}\]
Applying formula \ref{eq:pi in terms of k} to the holomorphic section
$s(\cdot)=\partial_{t}K_{t}(x,\cdot)$ shows that the second term
above equals $2\partial_{t}(K_{t}(x,x)).$ Hence, \[
\partial_{t}(K_{t}(x,x))=\int_{X_{y}}\left|K_{t}(x,y)\right|^{2}(\partial_{t}u_{t}(y))e^{-(\psi_{t}(x)+\psi_{t}(y)},\]
 which proves the lemma, since $N\beta_{u}(x)=K(x,x)e^{-\psi(x)}$.
\end{proof}

\subsection{A {}``Bergman kernel proof'' of Theorem \ref{thm:(Berndtsson)-Let-}}

Let $\psi_{t}:=\psi_{0}+u_{t}.$ As will be shown below, differentiating
$\mathcal{L}_{\omega_{0}}(u_{t})$ gives \begin{equation}
\partial_{t}\partial_{\bar{t}}\mathcal{L}_{\omega_{0}}(u_{t})=\frac{1}{N}\sum_{i=1}^{N}(\left\Vert (\partial_{t}\partial_{\bar{t}}u_{t})s_{i}\right\Vert _{\psi_{t}}^{2}-\left\Vert (\partial_{\bar{t}}u_{t}s_{i})-\Pi_{u_{t}}(\partial_{\bar{t}}u_{t}s_{i})\right\Vert _{\psi_{t}}^{2}),\label{eq:second der of l as proj}\end{equation}
where $(s_{i})$ is orthnormal wrt $\psi_{t}=\psi_{0}+u_{t}.$ Given
this formula the argument proceeds exactly as in \cite{bern2}; by
the definition of $\Pi_{u_{t}},$ the second term inside the sum is
the $L^{2}$- norm of the solution $s$ to the inhomogenous $\bar{\partial}-$equation
on $X:$ \[
\bar{\partial_{X}}s=\bar{\partial_{X}}(\partial_{t}u))s_{i},\]
 which has minimal norm wrt $\left\Vert \cdot\right\Vert _{\psi_{t}}^{2}.$
Now the Hörmander-Kodaira $L^{2}-$inequality for the solution gives
\begin{equation}
i^{n^{2}}\int_{X}s\wedge\bar{s}e^{-\psi_{t}}\leq i^{n^{2}}\int_{X}\left|\bar{\partial_{X}}(\partial_{t}u_{t})\right|_{\omega_{u_{t}}}^{2}s\wedge\bar{s}e^{-\psi_{t}},\label{eq:horm est}\end{equation}
 using that $\omega_{u_{t}}>0.$ Hence, by formula \ref{eq:second der of l as proj},
\[
\partial_{t}\partial_{\bar{t}}\mathcal{L}_{\omega_{0}}(u_{t})\geq\frac{1}{N}\sum_{i=1}^{N}(\left\Vert (\partial_{t}\partial_{\bar{t}}u_{t})-\left|\bar{\partial_{X}}(\partial_{t}u_{t})\right|_{\omega_{u_{t}}}^{2})s_{i}\right\Vert _{\psi_{t}}^{2}\]
But since, by assumption, $(dd^{c}U+\pi_{X}^{*}\omega_{0})^{n+1}\geq0$
the rhs is non-negative (compare formula \ref{eq:expanding monge as c}),
which proves that $\mathcal{L}_{\omega_{0}}(u_{t})$ is \emph{convex
}wrt real $t.$ Note that $\mathcal{L}_{\omega_{0}}(u_{t})$ is \emph{affine}
precisely when \ref{eq:horm est} is an\emph{ equality. }By examining
the Bochner-Kodaira-Nakano-Hörmander\emph{ identity} implying the
inequality \ref{eq:horm est} one sees that the remaining term appearing
in the identity has to vanish. In turn, this is used to show that
the vector field $V_{t}$ defined by formula \ref{eq:int multip}
has to be \emph{holomorphic} on $X$ (see \cite{bern2}). Integrating
$V_{t}$ finally gives the existence of the automorphism $S_{1}$
in Theorem \ref{thm:(Berndtsson)-Let-}, as explained in section \ref{sub:Proof-of-Theorem-main}.

In \cite{bern2} formula \ref{eq:second der of l as proj} was derived
using the general formalism of holomorphic vector bundles and their
curvature. We will next give an alternative {}``Bergman kernel proof''.
First formula \ref{eq:r operator} and Leibniz product rule give \[
\partial_{t}\partial_{\bar{t}}\mathcal{L}_{\omega_{0}}(u_{t})=\int_{X}(\partial_{t}\partial_{\bar{t}}u_{t})\beta_{u_{t}}+\frac{d\beta_{u_{t}}}{dt}_{t=0+}(\partial_{t}u_{t})\]
Next, by formula \ref{eq:r operator} the second term may be expressed
in terms of the Bergman kernel $K_{t}(x,y)$ associated to the weight
$\psi_{t}$ as \[
\frac{1}{N}\int_{X\times X}\left|K_{t}(x,y)\right|^{2}e^{-(\psi_{t}(x)+\psi_{t}(y)}((\partial_{t}u_{t})(x)(\partial_{t}u)(y)-\int_{X}\beta(\partial_{t}u_{t})^{2},\]
 By simple and  well-known identities for Toeplitz operators this
last expression, for $t=0,$ is precisely the trace of the operator
$T[\partial_{t}u_{t}])^{2}-T[(\partial_{t}u_{t})^{2}].$ All in all
we obtain \[
\partial_{t}\partial_{\bar{t}}\mathcal{L}_{\omega_{0}}(u_{t})=\frac{1}{N}\mbox{Tr}(T[\partial_{t}\partial_{\bar{t}}u_{t}]+(T[\partial_{t}u_{t}])^{2}-T[(\partial_{t}u_{t})^{2}]),\]
for $t=0.$ Expanding in terms of an orthonormal base $s_{i}$ hence
gives \[
\partial_{t}\partial_{\bar{t}}\mathcal{L}_{\omega_{0}}(u_{t})=\frac{1}{N}\sum_{i=1}^{N}(\left\Vert (\partial_{t}\partial_{\bar{t}}u_{t})s_{i}\right\Vert _{\psi_{0}+u_{t}}^{2}+\left\Vert \Pi_{u_{t}}(\partial_{t}u_{t}s_{i})\right\Vert _{\psi_{0}+u_{t}}^{2}-\left\Vert \partial_{t}u_{t}s_{i}\right\Vert _{\psi_{0}+u_{t}}^{2}),\]
for $t=0$ (and hence for all $t$ by symmetry) which finally proves
\ref{eq:second der of l as proj}, using {}``Pythagora's theorem''.

\end{document}